\documentclass[11pt]{amsart}
\usepackage{}
\usepackage{amsfonts}
\usepackage{amsfonts,latexsym,rawfonts,amsmath,amssymb,amsthm}
\usepackage[plainpages=false]{hyperref}
\usepackage{mathrsfs}
\RequirePackage{amsmath} \RequirePackage{amssymb}
\usepackage{fancyhdr}
\usepackage{graphicx}
\usepackage{amscd,latexsym,amsthm,amsfonts,amssymb,amsmath,amsxtra}
\usepackage{amscd,mathrsfs,latexsym,amsmath,amsthm,amssymb,amsxtra,}
\usepackage{latexsym,amsmath,amsthm,amssymb,amsxtra,mathrsfs}
\usepackage[all]{xy}

\makeatletter

\makeatother

\numberwithin{equation}{section}
%\TagsOnRight

%\newcommand{TagsOnRight}

%\renewcommand{\thesection}{\Roman{section}}

%-----------------------------------------------------------------------------------

%-----------------------------------------------------------------------------------

\newcommand{\beq}{\begin{equation}}
\newcommand{\eeq}{\end{equation}}
\newcommand{\beqs}{\begin{eqnarray*}}
\newcommand{\eeqs}{\end{eqnarray*}}
\newcommand{\beqn}{\begin{eqnarray}}
\newcommand{\eeqn}{\end{eqnarray}}
\newcommand{\beqa}{\begin{array}}
\newcommand{\eeqa}{\end{array}}

\def\lra{\longrightarrow}

\def\bc{\begin{center}}
\def\ec{\end{center}}

\def\begeq{\begin{equation}}
\def\endeq{\end{equation}}
\def\and{\quad{\rm and}\quad}

\let\lra=\longrightarrow

\def\mapright\#1{\,\smash{\mathop{\lra}\limits^{\#1}}\,}

\newtheorem{prop}{Proposition}[section]
\newtheorem{theo}[prop]{Theorem}
\newtheorem{lem}[prop]{Lemma}

\newtheorem{cor}[prop]{Corollary}
\newtheorem{rem}[prop]{Remark}

\newtheorem{defi}[prop]{Definition}

\pagestyle{fancy}
\lhead{}
\chead{   Classification of  gradient steady Ricci solitons  }
\rhead{}

\begin{document}
\bibliographystyle{plain}

\date{}
\author {Yuxing $\text{Deng}^{*}$ }
\author { Xiaohua $\text{Zhu}^{**}$}

\thanks {*Partially supported by the NSFC 11701030, **by the NSFC 11331001 and 11771019}
\subjclass[2000]{Primary: 53C25; Secondary: 53C55,
58J05}
\keywords { Ricci flow, steady Ricci solitons,  rotational symmetry}

\address{ Yuxing Deng\\School of Mathematics and Statistics, Beijing Institute of Technology,
Beijing, 100081, China\\
6120180026@bit.edu.cn}

\address{ Xiaohua Zhu\\School of Mathematical Sciences and BICMR, Peking University,
Beijing, 100871, China\\
xhzhu@math.pku.edu.cn}

\title{Classification of  gradient steady Ricci solitons with linear curvature decay}
\maketitle

\section*{\ }

\begin{abstract} In this paper, we  give a description for   steady Ricci solitons with a linear decay of sectional  curvature. In particular, we classify all 3-dimensional steady Ricci solitons and 4-dimensional $\kappa$-noncollpased steady Ricci solitons with nonnegative sectional curvature under  the linear curvature decay.

\end{abstract}

\section{Introduction}

  In his celebrated paper \cite{ Pe1},   Perelman conjectured that \textit{any 3-dimensional $\kappa$-noncollapsed and non-flat steady (gradient) Ricci soliton  must be rotationally symmetric}.
The conjecture was solved by Brendle in 2012 \cite{Br1}. For  higher dimensional steady Ricci solitons with positive  sectional  curvature,  Brendle also proved that they must be  rotationally symmetric if they  are asymptotically   cylindrical \cite{Br2}.
By verifying  the asymptotically   cylindrical  property,  the authors recently show  in \cite{DZ6} that any higher dimensional  $\kappa$-noncollapsed  steady Ricci solitons  with  nonnegative  curvature  operator  must be  rotationally symmetric, if its scalar curvature $R(x)$ satisfies,
\begin{align}\label{scalar curvature linear decay}
R(x)\le\frac{C}{\rho(x)},~\forall~\rho(x)\ge r_0,
\end{align}
for some  $r_0>0$.
where  $\rho(x)$ denotes the distance from a fixed  point $x_0$.

In the present paper, we study steady Ricci solitons with nonnegative sectional curvature under the assumption (\ref{scalar curvature linear decay}) even without the $\kappa$-noncollapsed condition.  By using Ricci flow method, we
shall  prove

\begin{theo}\label{dimension reduction theorem}
Let $(M,g)$ be an $n$-dimensional steady Ricci soliton with nonnegative sectional curvature which satisfying  (\ref{scalar curvature linear decay}).
Then the universal cover $(\widetilde{M},\widetilde{g})$ of $(M,g)$ is one of the following:

(i) $(\widetilde{M},\widetilde{g})$ is the  euclidean space  $(\mathbb{R}^{n}, g_{Euclid})$.

(ii) $(\widetilde{M}, \widetilde{g})=(\mathbb{R}^2, g_{cigar})\times(\mathbb{R}^{n-2}, g_{Euclid})$, where $g_{cigar}$ is a Cigar solution of Ricci soliton on   $\mathbb{R}^2$.

(iii) $(\widetilde{M}, \widetilde{g})=(N^k, g_{N})\times(\mathbb{R}^{n-k}, g_{Euclid})$ ($k>2$), where $(N, g_{N})$ is a $k$-dimensional steady Ricci soliton with nonnegative sectional curvature  and positive Ricci curvature. Moreover, the scalar curvature $R_{N}(\cdot)$ of $(N, g_{N})$ satisfies
\begin{align}\label{exact linear decay}
\frac{C_1^{-1}}{\rho_N(x)}\le R_N(x)\le\frac{C_1}{\rho_N(x)}, ~ R_N(x)>>1.
\end{align}
\end{theo}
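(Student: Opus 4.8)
The plan is to combine a soul-type argument (via the Cheeger–Gromoll splitting theorem) with Hamilton's dimension-reduction for ancient solutions, using the steady soliton structure to control the asymptotic geometry. The linear scalar curvature decay (\ref{scalar curvature linear decay}) plays a double role: it forces the soliton to have many Euclidean directions, and it pins down the precise curvature decay on the ``essential'' factor.

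First I would analyze the asymptotic geometry. Since $(M,g)$ has nonnegative sectional curvature and $R(x)\to 0$ as $\rho(x)\to\infty$, blow-down limits along a ray exist by Hamilton's compactness (the soliton is an ancient Ricci flow with bounded curvature on compact sets, and nonnegative curvature gives the needed injectivity-radius/volume control after passing to the universal cover). The key point is that a blow-down of a steady soliton with linearly decaying scalar curvature has \emph{zero} scalar curvature, hence by the strong maximum principle for the evolution of $R$ under Ricci flow (or directly: a nonnegatively curved steady soliton with $R\equiv 0$ is Ricci-flat, and nonnegatively curved Ricci-flat is flat) the blow-down is flat $\mathbb{R}^n$. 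This says $(\widetilde M,\widetilde g)$ has Euclidean volume growth ratio degenerating, i.e. it has at least one line unless it is already flat — more precisely, one extracts a line in $\widetilde M$ from the fact that minimizing geodesics between far-apart points on a ray pass through a region of curvature tending to zero.

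Next, apply the Cheeger–Gromoll splitting theorem iteratively on $\widetilde M$: write $(\widetilde M,\widetilde g)=(N^k,g_N)\times(\mathbb{R}^{n-k},g_{Euclid})$ where $N$ contains no line. Because the soliton structure is preserved under the splitting (the potential function splits as well, being convex-type along the Euclidean factor — here one uses that $\nabla f$ is parallel in the flat directions, which follows from $R_{ij}+\nabla_i\nabla_j f=0$ and flatness), $(N,g_N)$ is itself a steady Ricci soliton with nonnegative sectional curvature containing no line, hence has positive Ricci curvature on a dense set; combined with the soliton equations and a second application of the strong maximum principle to the curvature operator, $\mathrm{Ric}_N>0$ everywhere (this is where nonnegative sectional, not merely nonnegative curvature operator, is used, via Hamilton's maximum principle identifying the holonomy-reduced factors). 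If $k=0$ we are in case (i); if $k=2$ then a $2$-dimensional nonflat steady soliton with $R\to 0$ is the cigar (by Hamilton's classification of $2$-dimensional solitons), giving case (ii); if $k>2$ we are in case (iii).

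Finally, for case (iii) I must upgrade the scalar curvature bound to the two-sided estimate (\ref{exact linear decay}) on $N$. The upper bound $R_N\le C_1/\rho_N$ is inherited from (\ref{scalar curvature linear decay}) since $R=R_N$ under the product splitting and $\rho_N\le\rho$. For the lower bound, I would use that on a steady soliton $R+|\nabla f|^2=\mathrm{const}$ and $\nabla_i R = 2R_{ij}\nabla_j f$; with $\mathrm{Ric}_N>0$ the potential function $f$ is strictly concave in the radial direction so $|\nabla f|$ is bounded below away from a compact set, and then integrating $\frac{d}{ds}R = 2\,\mathrm{Ric}(\nabla f,\nabla f)/|\nabla f|$ along gradient flow lines of $f$, together with a Harnack-type inequality (Hamilton's trace Harnack for ancient solutions with $\mathrm{Ric}\ge0$) forces $R_N\gtrsim 1/\rho_N$. \textbf{The main obstacle} I anticipate is precisely this lower bound: ruling out faster-than-linear decay on the irreducible factor $N$ requires genuinely using that $N$ has no Euclidean de~Rham factor and no line, and converting that into a quantitative lower bound on $|\nabla f|$ — the soft splitting argument only gives that such a factor has been removed, not a rate. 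Getting from ``$N$ is irreducible'' to ``$R_N\ge c/\rho_N$'' is where the real work lies, likely via analyzing the level sets of $f$ and showing they cannot collapse, i.e. a non-collapsing statement for $N$ that does \emph{not} assume $\kappa$-noncollapsing of the original $M$.
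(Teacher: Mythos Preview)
Your reduction to a product $(N^k,g_N)\times\mathbb{R}^{n-k}$ with $\mathrm{Ric}_N>0$ is essentially what the paper does as well (it quotes this splitting from \cite{GLX} and \cite[Lemma~5.1]{DZ6} rather than rederiving it via blow-downs; note that your blow-down sketch has an injectivity-radius issue since $M$ is \emph{not} assumed $\kappa$-noncollapsed). The case $k=2$ via the cigar classification is also the same.

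The genuine gap is in the lower bound $R_N\ge c_0/\rho_N$, and your proposed mechanism will not close it. The identities you cite give $\langle\nabla R,\nabla f\rangle=-2\,\mathrm{Ric}(\nabla f,\nabla f)\le 0$, so integrating along gradient flow lines only tells you $R$ is monotone decreasing as $f$ increases; Hamilton's trace Harnack is an equality on steady solitons and yields exactly this monotonicity, nothing more. Neither produces a quantitative lower bound, and both are consistent with $R_N\rho_N\to 0$ along a subsequence. You correctly flag that the issue is a non-collapsing statement for the level sets of $f_N$ that does not assume ambient $\kappa$-noncollapsing, but you are missing the mechanism.

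The paper's argument is topological rather than analytic. First, independently of any lower curvature bound, one proves a uniform diameter estimate $\mathrm{diam}(\Sigma_r,g_N)\le C\sqrt{r}$ for the level sets $\Sigma_r=\{f_N=r\}$ (Section~3), using second-variation along geodesics in $\Sigma_r$ together with the curvature estimates of Proposition~\ref{lem-pointwise curvature estimate}. Now suppose $R_N(p_i)f_N(p_i)\to 0$. Pulling back by the exponential map (to sidestep the lack of an injectivity-radius bound) and applying the strong maximum principle to the local Ricci-flow limit, one shows $R_N(x)f_N(x)\to 0$ on a ball of \emph{fixed} radius $\varepsilon=\varepsilon(n,C)$ in the rescaled metric $f_N(p_i)^{-1}g_N$. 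Because the rescaled level set $(\Sigma_{f_N(p_i)},f_N(p_i)^{-1}\bar g)$ has uniformly bounded diameter, finitely many such $\varepsilon$-balls cover it, and one chains the local vanishing across the whole level set. Thus $(\Sigma_{f_N(p_i)},f_N(p_i)^{-1}\bar g)$ has curvature $\to 0$ and bounded diameter: it is $\widehat\varepsilon(n)$-almost-flat for large $i$. Gromov's almost-flat-manifold theorem then forces its universal cover to be $\mathbb{R}^{k-1}$. But since $\mathrm{Ric}_N>0$, the level sets are diffeomorphic to $\mathbb{S}^{k-1}$ (\cite[Lemma~2.1]{DZ6}), a contradiction for $k\ge 3$. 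This is the missing idea: the lower bound comes not from Harnack but from the incompatibility of almost-flatness with spherical topology.
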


It is known by a result of Chen   that any  3-dimensional ancient solution has nonnegative sectional curvature \cite{Ch}. Thus  by Theorem \ref{dimension reduction theorem} together with  a result in \cite{DZ5},     we give the following classification  of 3-dimensional steady Ricci solitons.

\begin{cor}\label{cor-classificaion of 3d}
Let $(M,g)$ be a $3$-dimensional steady Ricci soliton which satisfies  (\ref{scalar curvature linear decay}).
Then the universal cover $(\widetilde{M},\widetilde{g})$ of $(M,g)$ is one of the following:

(i) $(\widetilde{M},\widetilde{g})$ is   the  euclidean space  $(\mathbb{R}^{3}, g_{Euclid})$.

(ii) $(\widetilde{M},\widetilde{g})=(\mathbb{R}^2,g_{cigar})\times\mathbb{R}$.

(iii) $(\widetilde{M},\widetilde{g})$ is rotationally symmetric and $(M,g)=(\widetilde{M},\widetilde{g})$.
\end{cor}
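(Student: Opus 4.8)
The plan is to read the corollary off from Theorem \ref{dimension reduction theorem} by specializing to $n=3$, and then to eliminate the one nontrivial surviving case with the help of Chen's theorem \cite{Ch}, the result of \cite{DZ5}, and Brendle's theorem \cite{Br1}.

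First I would recall that a steady gradient Ricci soliton generates a complete eternal (in particular ancient) solution of the Ricci flow with bounded curvature, so by Chen's theorem \cite{Ch} the metric $g$ automatically has nonnegative sectional curvature. Thus Theorem \ref{dimension reduction theorem} applies with $n=3$, and I would just go through its trichotomy. If $\widetilde M=(\mathbb{R}^{n},g_{Euclid})$ we are in case (i). If $\widetilde M=(\mathbb{R}^2,g_{cigar})\times(\mathbb{R}^{n-2},g_{Euclid})$ then $n-2=1$ and we are in case (ii). In the remaining case $\widetilde M=(N^k,g_N)\times(\mathbb{R}^{n-k},g_{Euclid})$ with $k>2$, the constraint $n=3$ forces $k=3$ and $n-k=0$, so $\widetilde M=(N^3,g_N)$ is itself a $3$-dimensional complete steady gradient Ricci soliton with nonnegative sectional curvature, positive Ricci curvature and the two-sided linear scalar curvature decay (\ref{exact linear decay}); in particular it satisfies (\ref{scalar curvature linear decay}), and, having $\mathrm{Ric}>0$, it is non-flat (and by Hamilton's strong maximum principle for the curvature operator even has strictly positive sectional curvature, though non-flatness is all that is needed below).

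It then remains to identify $(N^3,g_N)$ in this last case. Here I would invoke the result of \cite{DZ5}, which shows that a $3$-dimensional steady gradient Ricci soliton satisfying (\ref{scalar curvature linear decay}) is $\kappa$-noncollapsed on all scales; being also non-flat, it is then rotationally symmetric by Brendle's theorem \cite{Br1}, i.e.\ it is the Bryant soliton. Finally, to get $(M,g)=(\widetilde M,\widetilde g)$, I would note that the soliton potential on $M$ lifts to a potential $\widetilde f$ on $\widetilde M$ invariant under the deck group $\Gamma$; since on the Bryant soliton the potential is determined up to an additive constant (as $\nabla^2\widetilde f=-\mathrm{Ric}$ and $\mathrm{Ric}>0$ leaves no nonzero parallel vector field), $\widetilde f$ agrees up to a constant with the standard potential, whose unique critical point is the tip $p$. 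Hence $\Gamma$ fixes $p$, and since $\Gamma$ acts freely it must be trivial; therefore $M=\widetilde M$, which completes the proof.

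The hard part is not in this deduction itself, which is short once Theorem \ref{dimension reduction theorem} is granted, but in the cited inputs: the crucial step is the passage in \cite{DZ5} from the linear curvature decay (\ref{scalar curvature linear decay}) to $\kappa$-noncollapsing in dimension $3$, since it is exactly this that makes Brendle's classification \cite{Br1} available for the factor $(N^3,g_N)$ in case (iii). Without that step one cannot conclude rotational symmetry, and the remaining topological rigidity $M=\widetilde M$ would also be in doubt.
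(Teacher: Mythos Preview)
Your proposal is correct and follows essentially the same route as the paper: invoke Chen's theorem \cite{Ch} to get nonnegative sectional curvature, apply Theorem~\ref{dimension reduction theorem} with $n=3$, and in case~(iii) use \cite{DZ5} to conclude rotational symmetry of the $3$-dimensional factor. The only differences are cosmetic: the paper cites \cite{DZ5} as giving rotational symmetry directly from the two-sided decay~(\ref{exact linear decay}), rather than unpacking it as $\kappa$-noncollapsing plus Brendle~\cite{Br1}; and the paper does not spell out the $M=\widetilde M$ argument, which you supply.
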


Corollary \ref{cor-classificaion of 3d}  gives a partial answer to a   conjecture  of  Hamilton that \textit{there should exist a family of collapsed 3-dimensional complete gradient steady Ricci solitons with positive curvature and $S^1$-symmetry} (cf. \cite{C-He}). Our result shows that
the curvature of  Hamilton's examples could not  have a linear decay.

In general, it is hard to classify steady Ricci solitons with a linear decay of sectional curvature in higher dimensions.
 For examples,  besides Bryant's  Ricci solitons as the rotationally symmetric solutions,   Cao constructed  a family of  U(n)-invariant steady Ricci solitons with positive sectional curvature and linear curvature decay \cite{Ca}.    Dancer and Wang   also  constructed many $\kappa$-noncollapsed steady Ricci solitons  in \cite{DW}  with  Ricci curvature ${\rm Ric}(\cdot)$ and  curvature tensor ${\rm Rm}(\cdot)$  which  satisfying
  \begin{align}\label{uniform curvature decay}
{\rm Ric}(x)\ge 0~{\rm and}~|{\rm Rm}(x)|\le \frac{C}{\rho(x)},~\forall~\rho(x) \ge r_0,
\end{align}
 for some $r_0>0$.
However,   by following the argument in the proof of Theorem \ref{dimension reduction theorem},  we  further prove

\begin{theo}\label{theo-small decay constant}
Let $(M,g)$ be an $n$-dimensional non-flat steady Ricci soliton with nonnegative sectional curvature and normalized scalar curvature
  $$\sup_{x\in M}R=1.$$
  Then, there exists a constant $\varepsilon(n)$ depends only on $n$ such that the universal cover of $(M,g)$ is $(\mathbb{R}^2,g_{cigar})\times(\mathbb{R}^{n-2},g_{Euclid})$ if
\begin{align}\label{decay with constant e(n)}
R(x)\rho(x)\le \varepsilon(n),~\forall~\rho(x)\ge r_0,
\end{align}
for some  $r_0>0$.
\end{theo}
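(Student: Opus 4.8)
The plan is to run the same dimension-reduction machinery used in Theorem \ref{dimension reduction theorem}, but to exploit the smallness of the decay constant to rule out alternatives (i) and (iii) in that theorem's trichotomy. First I would normalize and set up a blow-down sequence: since $(M,g)$ is a steady Ricci soliton with nonnegative sectional curvature, it is an eternal solution of the Ricci flow with bounded curvature, and by the soliton identity $R + |\nabla f|^2 = \text{const}$ together with $\sup_M R = 1$ we control the potential function. Pick points $x_i \to \infty$ and rescale $g$ by $R(x_i)$ (equivalently by $\rho(x_i)^{-2}$, up to the decay hypothesis); the rescaled scalar curvature at the base point is $1$, and by \eqref{decay with constant e(n)} the scalar curvature at any fixed rescaled distance $d$ from $x_i$ is at most $\varepsilon(n)/(\rho(x_i) + O(d\,\rho(x_i)^{1/2}))\cdot\rho(x_i) \to \text{something} \le \varepsilon(n)(1+o(1))$ along the sequence — i.e. the blow-down limit $(M_\infty, g_\infty, x_\infty)$ is a steady soliton (or ancient solution) with $R_\infty(x_\infty) = 1$ but $\sup R_\infty \le \varepsilon(n)$. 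Choosing $\varepsilon(n) < 1$ already forces this limit to be nontrivial in a controlled way; the real work is extracting a splitting.

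The key step is a Toponogov/Cheeger–Gromoll argument: under \eqref{decay with constant e(n)} the curvature decays fast enough that $(M,g)$ contains a line, so by the splitting theorem $(\widetilde M, \widetilde g) = (N^{n-1}, g_N)\times(\mathbb{R}, g_{Euclid})$, and one iterates. Concretely, I expect that if $\varepsilon(n)$ is small enough then a minimizing geodesic ray from a fixed point, together with the asymptotics of $f$ (whose level sets are "almost totally geodesic" because $\text{Ric} = \nabla^2 f$ is small at infinity when $R$ is small, using $|\text{Ric}| \le C(n) R$ for nonnegative curvature operator — here nonnegative sectional curvature in the soliton setting gives $\text{Ric}\ge 0$ and $|\text{Rm}|$ comparable to $R$ by Hamilton's trace Harnack / Shi-type estimates), produces enough flat directions. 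Alternatively, and I think more cleanly, one runs the argument of Theorem \ref{dimension reduction theorem} verbatim to land in case (ii) or (iii) of that theorem, then observes that case (iii) cannot occur: the factor $(N^k, g_N)$ with $k > 2$ has positive Ricci curvature and, by \eqref{exact linear decay}, scalar curvature bounded below by $C_1^{-1}/\rho_N$, so along $N$ one has $R_N(x)\rho_N(x)\ge C_1^{-1}$; pulling this back to $M$ (distances in the product only decrease each factor's distance) contradicts \eqref{decay with constant e(n)} once $\varepsilon(n) < C_1^{-1}$, provided $C_1$ can be taken to depend only on $n$. Case (i) is flat, contradicting non-flatness. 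Hence only case (ii) survives, which is exactly the desired conclusion $(\mathbb{R}^2, g_{cigar})\times(\mathbb{R}^{n-2}, g_{Euclid})$.

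The main obstacle is the uniformity of the constant $C_1$ in \eqref{exact linear decay}: as stated, Theorem \ref{dimension reduction theorem} only asserts existence of \emph{some} $C_1$ depending on the soliton $N$, not a dimensional bound. To get $\varepsilon(n)$ depending only on $n$ I would need a compactness argument: suppose no such $\varepsilon(n)$ works, take a sequence of non-flat steady solitons $(M_j, g_j)$ with nonnegative sectional curvature, $\sup R_j = 1$, and $R_j(x)\rho_j(x) \le 1/j$ outside $r_0$, none of which splits off $\mathbb{R}^{n-2}\times g_{cigar}$. Normalizing and passing to a pointed limit (Hamilton–Cheeger–Gromov compactness, using the uniform curvature bound and $\kappa$-noncollapsing-free version since we only need local smooth convergence around points where $R$ is bounded below), the limit is a steady soliton with $R\equiv 0$ on large regions forced by the decay — one then argues the limit is $(\mathbb{R}^2, g_{cigar})\times\mathbb{R}^{n-2}$ (the unique non-flat possibility with this degenerate decay in the Theorem \ref{dimension reduction theorem} list, since (iii) would force a genuine positive lower bound surviving the limit), and a smooth-convergence backward argument shows $(M_j, g_j)$ itself must be this product for large $j$, contradiction. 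The delicate points are (a) ensuring the blow-down limit is still a \emph{soliton} (not merely an ancient solution) — this uses the eternal-solution structure and the equality case in Hamilton's Harnack inequality, exactly as in \cite{DZ6} and the proof of Theorem \ref{dimension reduction theorem}; and (b) upgrading "the limit splits" to "the original splits," which is where one invokes the stability of the de Rham decomposition under $C^\infty$ convergence together with the rigidity that a soliton close to a product soliton with an $\mathbb{R}^{n-2}$ flat factor must itself have that factor (the holonomy reduction is detected by parallel vector fields, which persist under smooth convergence on compact sets and then extend by the soliton equations).
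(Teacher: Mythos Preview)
Your overall framework is correct: apply Theorem~\ref{dimension reduction theorem}, discard case~(i) by non-flatness, and argue that case~(iii) is incompatible with a sufficiently small decay constant. But the mechanism you propose for eliminating~(iii) --- using the lower bound $R_N\rho_N\ge C_1^{-1}$ from \eqref{exact linear decay} to contradict \eqref{decay with constant e(n)} once $\varepsilon(n)<C_1^{-1}$ --- runs into exactly the obstacle you name: $C_1$ is not shown to be dimensional, and your compactness workaround has genuine gaps. With no $\kappa$-noncollapsing assumed, Hamilton--Cheeger--Gromov compactness for the sequence $(M_j,g_j)$ is unavailable (the injectivity radius may collapse precisely in the regime you care about), and the step ``the limit splits, hence the original splits for large $j$'' is not justified by any stability statement proved here.

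The paper's argument bypasses the lower bound in \eqref{exact linear decay} entirely. On the putative factor $(N^k,g_N,f_N)$ one has directly from the hypothesis $R_N(x)f_N(x)\le 2\varepsilon(n)$ for $f_N(x)$ large. The level sets $\overline\Sigma_r=\{f_N=r\}$ then satisfy, via the Gauss equation and Proposition~\ref{lem-pointwise curvature estimate}, $|\overline{\rm Rm}_{r^{-1}h}|\le C_2(n)\varepsilon(n)$ after rescaling by $r^{-1}$; and by Proposition~\ref{theorem-diameter estimate} (whose constant depends only on $n$ once $R_{\max}=1$ and the decay constant is at most $1$, per the Remarks in Sections~\ref{section 2}--\ref{section 3}) the rescaled diameter is bounded by $C_1(n)$. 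Hence $|\overline{\rm Rm}|\cdot{\rm diam}^2\le C(n)\varepsilon(n)$, and for $\varepsilon(n)$ small this is below Gromov's threshold $\widehat\varepsilon(n)$. Theorem~\ref{Gromov's theorem} then forces the universal cover of $\overline\Sigma_r$ to be $\mathbb{R}^{k-1}$, contradicting $\overline\Sigma_r\cong\mathbb{S}^{k-1}$. This is the same Gromov almost-flat contradiction used in Proposition~\ref{main theorem}, except that here the smallness of curvature on the level set is supplied directly by the hypothesis rather than by a sequence with $R(p_i)f(p_i)\to 0$; that is why the resulting $\varepsilon(n)$ is dimensional. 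Your blow-down and compactness detours are unnecessary.
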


Theorem \ref{theo-small decay constant} is an improvement of  Munteanu-Sung-Wang's result  \cite[Corollary 5.5]{MSW}), where they  proved  that the universal cover of non-flat steady Ricci solitons with nonnegative sectional curvature is $(\mathbb{R}^2,g_{cigar})\times(\mathbb{R}^{n-2},g_{Euclid})$ if it satisfies
\begin{align}
R(x)\rho(x)\to 0,~as~\rho(x)\to\infty.
\end{align}

 By Theorem \ref{dimension reduction theorem}, we can  also classify  non-flat  $4$-dimensional $\kappa$-noncollapsed steady Ricci solitons with  a  linear decay of  nonnegative sectional curvature.

\begin{cor}\label{theo-4 dimension noncollapsed case}
Let $(M,g)$ be a   non-flat  $4$-dimensional $\kappa$-noncollapsed steady Ricci soliton with nonnegative sectional curvature  which satisfying  (\ref{scalar curvature linear decay}).
Then $(M,g)$ must be rotationally symmetric.
\end{cor}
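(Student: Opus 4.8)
The plan is to feed $(M,g)$ into Theorem \ref{dimension reduction theorem} with $n=4$ and then use the $\kappa$-noncollapsing hypothesis, together with (\ref{scalar curvature linear decay}), to discard every possibility for the universal cover $(\widetilde M,\widetilde g)$ except the rotationally symmetric one. Two elementary remarks will be used throughout. First, since the universal covering $\pi\colon(\widetilde M,\widetilde g)\to(M,g)$ is a local isometry one has $\pi(B(\widetilde x,r))=B(x,r)$ for a lift $\widetilde x$ of $x$, hence $|{\rm Rm}_{\widetilde g}|\le r^{-2}$ on $B(\widetilde x,r)$ whenever $|{\rm Rm}_g|\le r^{-2}$ on $B(x,r)$, and $\mathrm{Vol}(B(\widetilde x,r))\ge\mathrm{Vol}(B(x,r))$; thus $(\widetilde M,\widetilde g)$ is again $\kappa$-noncollapsed. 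Second, if a complete product $X\times\mathbb R$ is $\kappa$-noncollapsed then $X$ is $\tfrac{\kappa}{2}$-noncollapsed, because $\mathrm{Vol}_{X\times\mathbb R}(B((p,0),r))\le 2r\,\mathrm{Vol}_X(B(p,r))$ and the curvature bound passes to the product. Since $(M,g)$ is non-flat, case (i) of Theorem \ref{dimension reduction theorem} is out, so $(\widetilde M,\widetilde g)$ is as in (ii), or as in (iii) with $k=3$ or $k=4$.

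Case (ii), $(\widetilde M,\widetilde g)=(\mathbb R^2,g_{cigar})\times(\mathbb R^2,g_{Euclid})$, cannot occur because it is \emph{not} $\kappa$-noncollapsed: the Gauss curvature of the cigar decays exponentially in the distance, so at a point lying at cigar-distance $\rho$ one may take balls of radius $r\sim\rho$ on which $|{\rm Rm}|\le r^{-2}$ (exponential beats polynomial), and such balls collapse in the $S^1$-direction of the cigar, $\mathrm{Vol}(B)\lesssim r\cdot r^2=o(r^4)$. For case (iii) with $k=3$ we have $(\widetilde M,\widetilde g)=(N^3,g_N)\times(\mathbb R,g_{Euclid})$; by the second remark $(N^3,g_N)$ is $\kappa$-noncollapsed, and it is a $3$-dimensional steady soliton with positive Ricci curvature satisfying (\ref{scalar curvature linear decay}) in view of (\ref{exact linear decay}), so Corollary \ref{cor-classificaion of 3d} forces $N^3$ to be the Bryant soliton $\Sigma^3$, i.e. $(\widetilde M,\widetilde g)=\Sigma^3\times\mathbb R$. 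Now $R_{\widetilde g}$ is constant along the $\mathbb R$-factor while $R_g\to 0$ at infinity of $M$; since $\mathrm{Isom}(\Sigma^3)$ is compact and fixes the tip (so $\mathrm{Isom}(\Sigma^3\times\mathbb R)=\mathrm{Isom}(\Sigma^3)\times\mathrm{Isom}(\mathbb R)$), this forces the $\mathbb R$-translation part of $\pi_1(M)$ to be a cocompact subgroup $t\mathbb Z$ (if there are no such translations then $\pi_1(M)=1$ and $M=\Sigma^3\times\mathbb R$ violates (\ref{scalar curvature linear decay})). But then $M$ is collapsed: for $x\in M$ whose lift sits at $\Sigma^3$-distance $\rho$ from the tip and $r\sim\rho^{1/2}$ with a small constant, $|{\rm Rm}_g|\le r^{-2}$ on $B(x,r)$ while $\mathrm{Vol}_g(B(x,r))\le t\cdot\mathrm{Vol}_{\Sigma^3}(\{\rho-r\le d(\cdot,\mathrm{tip})\le\rho+r\})\lesssim t\rho r=o(r^4)$, contradicting $\kappa$-noncollapsing. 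So $k=3$ is excluded.

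This leaves case (iii) with $k=4$: $(\widetilde M,\widetilde g)=N^4$ is itself a $\kappa$-noncollapsed $4$-dimensional steady soliton with nonnegative sectional curvature, positive Ricci curvature, and exact linear scalar decay (\ref{exact linear decay}) (hence bounded curvature, since $|{\rm Rm}|\le c_4 R$ when the sectional curvature is nonnegative). Here I would run the dimension-reduction argument of the proof of Theorem \ref{dimension reduction theorem}, now using $\kappa$-noncollapsing: rescaling $g_i=R(x_i)g$ at points $x_i\to\infty$, the curvature stays bounded near $x_i$ by (\ref{exact linear decay}), so Hamilton's compactness produces a complete limit $(M_\infty,g_\infty)$, and the soliton identities $R+|\nabla f|^2\equiv R_{\max}$ and $\nabla^2 f=-{\rm Ric}$ show that $\nabla f/|\nabla f|$ converges to a parallel unit field, whence $M_\infty=\Sigma^3_\infty\times\mathbb R$. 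The cross-section $\Sigma^3_\infty$ is a $3$-dimensional $\kappa$-solution of nonnegative sectional curvature, which in dimension three is the same as nonnegative curvature operator; by Perelman's classification of $3$-dimensional $\kappa$-solutions \cite{Pe1}, together with the two-sided estimate (\ref{exact linear decay}) and the properness of the potential $f$ on $N^4$, $\Sigma^3_\infty$ must be the round $3$-sphere, i.e. $N^4$ is asymptotically cylindrical. Brendle's theorem \cite{Br2} then gives that $N^4$ is rotationally symmetric, i.e. the $4$-dimensional Bryant soliton; since the Bryant soliton admits no nontrivial free isometry, $\pi_1(M)=1$, so $M=\widetilde M$ and $(M,g)$ is rotationally symmetric.

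The main work is this last case: carrying out the dimension reduction so as to identify the asymptotic cross-section of $N^4$ with the \emph{round} $3$-sphere — not merely with some $3$-dimensional $\kappa$-solution — while assuming only nonnegative sectional curvature rather than the nonnegative curvature operator hypothesis of \cite{DZ6}. What makes it go through is precisely that the relevant cross-section is three-dimensional, where nonnegative sectional curvature and nonnegative curvature operator coincide and Perelman's $\kappa$-solution structure theory is available, and that the exact two-sided decay (\ref{exact linear decay}), together with properness of $f$, pins the cross-section down to a compact, and then round, model; by contrast, the elimination of cases (ii) and (iii) with $k=3$ is a comparatively soft collapsing argument.
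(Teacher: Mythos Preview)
Your route is genuinely different from the paper's, and considerably longer. The paper never splits case (iii) into $k=3$ versus $k=4$; instead it pushes the two--sided estimate (\ref{exact linear decay}) from $N$ back down to $M$ itself. Since $\widetilde f$ is constant on the Euclidean factor one has $R_{\widetilde M}=R_N$ and $\widetilde f=f_N$ pointwise, so $C^{-1}/\widetilde f\le R_{\widetilde M}\le C/\widetilde f$; then $\widetilde f=f\circ\pi$ and $R_{\widetilde M}=R\circ\pi$ give $C^{-1}/f\le R\le C/f$ on $M$, and finally (\ref{linear of f}) converts this to the two--sided bound $(C')^{-1}/\rho\le R\le C'/\rho$ on $M$. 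At that point the paper simply invokes \cite[Theorem~1.5]{DZ6} directly on $(M,g)$: a $4$--dimensional $\kappa$--noncollapsed steady soliton with nonnegative sectional curvature and two--sided linear scalar decay is rotationally symmetric. This handles both $k=3$ and $k=4$ in one stroke, and avoids your collapsing analysis of $\Sigma^3\times\mathbb R$ quotients as well as any discussion of $\pi_1(M)$ afterwards.

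What your approach buys is self--containment: you are in effect reproving the $4$--dimensional case of \cite[Theorem~1.5]{DZ6} rather than citing it. Your sketch has the right architecture, but two points are underspecified. First, you need the cross--section $\Sigma^3_\infty$ to be \emph{compact}; ``properness of $f$'' tells you level sets of $f$ are compact, but to know the rescaled level sets stay bounded in the limit you need the diameter estimate of Proposition~\ref{theorem-diameter estimate} (this is exactly the role it plays in Lemma~\ref{theo-convergence of genneral case}). Second, Perelman's classification of $3$--dimensional $\kappa$--solutions does not by itself force a compact $\kappa$--solution to be the \emph{round shrinking} sphere at a given time; the paper's Section~6 argument uses instead that $(\Sigma,g_\Sigma(t))$ inherits the Type--I bound $|{\rm Rm}|\le C/(1+|t|)$ from (\ref{uniform curvature decay}) via the soliton flow (your ``two--sided estimate (\ref{exact linear decay})'' is the seed for this), and then appeals to Ni's theorem \cite{Ni} on closed Type--I ancient solutions. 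With those two ingredients supplied your argument goes through; without them the $k=4$ step is incomplete.
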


The proof of  Corollary \ref{theo-4 dimension noncollapsed case} is  reduced  to using  a result on  $4$-dimensional steady Ricci solitons with   nonnegative sectional curvature   in \cite{DZ6}, where   the   asymptotically   cylindrical  property is verified  in terms of Brendle  under the condition (\ref{exact linear decay}).    In fact,  for  any  $n$-dimensional  $\kappa$-noncollapsed steady Ricci solitons,   we  can weaken   the nonnegativity of sectional curvature  in  Theorem \ref{dimension reduction theorem}   to (\ref{uniform curvature decay})  to   study the  asymptotic behavior of level sets  $\Sigma_r=\{x\in M|~f(x)=r\}$  on $M$ (cf.  Theorem  \ref{theorem-uniform decay and noncollapsed case}). In particular,
when $n=4$,  we also derive the   asymptotically   cylindrical  property  as follows.

\begin{theo}\label{cor-4d-Ricci positive}
Let $(M,g)$ be a $4$-dimensional $\kappa$-noncollapsed steady Ricci soliton with positive Ricci curvature. Suppose that there are  $r_0, C>0$ such that
$$|{\rm Rm}(x)|\le \frac{C}{\rho(x)},~\forall~\rho(x) \ge r_0, $$
for some  $r_0>0$.  Then, $(M,g)$ is asymptotically cylindrical. Moreover, the sectional curvature is positive away from a compact set $K$ in $M$.
\end{theo}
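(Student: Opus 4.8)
The plan is to run the level-set analysis of Theorem~\ref{theorem-uniform decay and noncollapsed case} (the $\kappa$-noncollapsed, uniform-decay counterpart of Theorem~\ref{dimension reduction theorem}) in dimension four, to use ${\rm Ric}>0$ to eliminate every dimension-reduction case, and then to recast the surviving picture as Brendle's asymptotically cylindrical condition. To begin, normalize $R+|\nabla f|^{2}\equiv 1$. Because ${\rm Ric}>0$, the potential $f$ is strictly concave, hence proper with a unique critical point $o$ and with strictly convex sublevel sets of $-f$; so $M$ is diffeomorphic to $\mathbb{R}^{4}$ and, for $r$ large, every level set $\Sigma_{r}=\{f=f(o)-r\}$ is an embedded $3$-sphere. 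The bound $|{\rm Rm}|\le C/\rho$ forces $R\to 0$ at infinity, so $|\nabla f|\to 1$ there and $-f(x)$ is comparable to $\rho(x)$.

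By $\kappa$-noncollapsing and $|{\rm Rm}|\le C/\rho$, Theorem~\ref{theorem-uniform decay and noncollapsed case} applies with $n=4$; its dichotomy is the one of Theorem~\ref{dimension reduction theorem}: $M$ is flat, or splits as a cigar soliton times $\mathbb{R}^{2}$, or splits as $N^{k}\times\mathbb{R}^{4-k}$ with $N$ a lower-dimensional steady soliton ($k<4$), or else $M$ itself satisfies the exact linear decay~(\ref{exact linear decay}) and its rescaled level sets $(\Sigma_{r},R(x_{r})\,g|_{\Sigma_{r}})$, $x_{r}\in\Sigma_{r}$, converge as $r\to\infty$ to a round $3$-sphere. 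The first three cases are impossible, since each makes ${\rm Ric}$ vanish along a Euclidean factor, contradicting ${\rm Ric}>0$. Hence $M$ obeys~(\ref{exact linear decay}) and its rescaled level sets converge to a round $S^{3}$, the limiting round metric being determined up to scale by the decay constant.

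Now I would upgrade this to the asymptotically cylindrical property. The soliton is an eternal Ricci flow $g(t)=\phi_{t}^{*}g$. Given $p_{i}\to\infty$ with $\lambda_{i}:=R(p_{i})\to 0$, the rescaled flows $g_{i}(t)=\lambda_{i}g(\lambda_{i}^{-1}t)$ based at $p_{i}$ have, on any fixed time interval, uniform local curvature bounds --- by $|{\rm Rm}|\le C/\rho$ and the fact that the $\nabla f$-flow keeps $\rho$ comparable to $\rho(p_{i})$ over the interval --- and are $\kappa$-noncollapsed, so Hamilton--Cheeger--Gromov compactness yields a limit ancient flow $(M_{\infty},g_{\infty}(t),p_{\infty})$ with ${\rm Ric}\ge 0$. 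Since $|\nabla(\nabla f/|\nabla f|)|\le 2|{\rm Ric}|/|\nabla f|\le C'/\rho\to 0$, the rescaled unit fields $\nabla f/|\nabla f|$ converge to a parallel unit field, so $M_{\infty}=N_{\infty}^{3}\times\mathbb{R}$ with $N_{\infty}^{3}$ the round $S^{3}$ found above, and the rescaled flows converge to the shrinking round cylinder $S^{3}\times\mathbb{R}$; this is Brendle's condition. For the last assertion, near infinity the rescaled curvature operator of $M$ is $C^{0}$-close to that of $S^{3}\times\mathbb{R}$, so the sectional curvatures of $2$-planes tangent to $\Sigma_{r}$ are close to the positive constant of the round sphere, hence positive for $\rho$ large; for a $2$-plane meeting $\nu=\nabla f/|\nabla f|$ the soliton identity $\langle\nabla R,\nabla f\rangle=2{\rm Ric}(\nabla f,\nabla f)$ gives, by ${\rm Ric}>0$, that the trace of the quadratic form ${\rm Rm}(\cdot,\nu,\nu,\cdot)$ on $\nu^{\perp}$ is strictly positive, while asymptotic cylindricity makes this form nearly a multiple of the identity, so it is positive definite for $\rho$ large. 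Thus the sectional curvature is positive outside a compact set $K$.

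The main obstacle is the roundness of the cross-section in the second step: distinguishing a genuine round $S^{3}$ from a merely Einstein limit, a bare topological $S^{3}$, or the compact ``ancient oval'' requires the uniform-decay level-set estimates, the compactness (bounded rescaled diameter) of the $\Sigma_{r}$, $\kappa$-noncollapsing, and ${\rm Ric}>0$ to be used together; this is also why one only obtains ``asymptotically cylindrical'' rather than global rotational symmetry. A secondary but genuine point is the strictness in the curvature claim: the model cylinder has vanishing sectional curvature on planes through its line, so positivity there must be drawn from ${\rm Ric}>0$ and the soliton equation rather than from the limit alone.
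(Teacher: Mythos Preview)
Your outline has two genuine gaps. First, you misread Theorem~\ref{theorem-uniform decay and noncollapsed case}: it does \emph{not} produce the splitting dichotomy of Theorem~\ref{dimension reduction theorem}. Its conclusion is only that each $\Sigma_r$ is diffeomorphic to a compact shrinking soliton with nonnegative Ricci, together with the lower bound~(\ref{lowe estimate of R}); and Theorem~\ref{dimension reduction theorem} itself is unavailable here because it assumes nonnegative \emph{sectional} curvature, which you only have \emph{a posteriori}. So the elimination of ``flat/cigar/$N^k\times\mathbb{R}^{4-k}$'' cases is not what happens. What the paper actually does is apply Lemma~\ref{theo-convergence of genneral case} to get, for any $p_i\to\infty$, a splitting limit $\mathbb{R}\times(\Sigma,g_\Sigma(t))$ where $(\Sigma,g_\Sigma(t))$ is a compact $3$-dimensional $\kappa$-noncollapsed ancient Ricci flow with nonnegative sectional curvature (by \cite{Ch}) and Type~I decay~(\ref{decay of t}); since ${\rm Ric}>0$ forces $\Sigma\cong\mathbb{S}^3$, Ni's classification \cite{Ni} of closed Type~I ancient solutions then identifies $(\Sigma,g_\Sigma(t))$ as the round shrinking sphere. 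This identification via \cite{Ni} is the step you are missing---you assert roundness of the cross-section but give no mechanism for it, and the ``further alternative'' you attribute to Theorem~\ref{theorem-uniform decay and noncollapsed case} does not exist. (Minor: with the paper's convention ${\rm Ric}={\rm Hess}\,f$, positive Ricci makes $f$ strictly \emph{convex}, not concave.)

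Second, your argument for positivity of sectional curvatures on planes through $\nu=\nabla f/|\nabla f|$ does not work. On the model cylinder the form ${\rm Rm}(\cdot,\nu,\nu,\cdot)$ vanishes identically, so asymptotic cylindricity only gives ${\rm Rm}(e_i,\nu,\nu,e_j)=o(R)$; saying the form is ``nearly a multiple of the identity'' then just means it is near $0\cdot I$, and a positive trace ${\rm Ric}(\nu,\nu)>0$ (also $o(R)$) does not force positive definiteness at this scale. One needs a genuine second-order analysis: the paper combines the explicit formula of Lemma~\ref{lem-formula of sectional curvature} with the asymptotics of Lemma~\ref{lem-limit of sectional curvature} to obtain
\[
|\nabla f|^{2}\,{\rm Rm}(e_n,e_j,e_j,e_n)\;\ge\;\Bigl(\tfrac{1}{(n-1)^2}-\epsilon\Bigr)R(p)^{2}\;>\;0
\]
for $\rho(p)$ large (Lemma~\ref{lem-positive sectional curvature}). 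That is, the radial sectional curvatures are of order $R^{2}$, not $R$, and their sign is read off from the $R^{2}$-coefficient rather than from the first-order cylindrical limit.
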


Compared with Corollary \ref{theo-4 dimension noncollapsed case}, we conjecture that  the $4$-dimensional steady Ricci soliton in Theorem \ref{cor-4d-Ricci positive} is actually a Bryant soliton.

At last, we give the main idea in the proof of Theorem \ref{dimension reduction theorem}.   By a splitting result \cite[Theorem 1.1]{GLX}  (also  see the proof in  \cite[Lemma 5.1]{DZ6}),   we only need to deal with the case (iii)  in  the theorem, and to prove the split-off steady Ricci soliton $(N^k,g_{N})$ $(k\ge 3)$  with nonnegative sectional curvature  and positive Ricci curvature satisfies (\ref{exact linear decay}).   First, we show that each level set  $\Sigma_r$ $(r>>1)$ on   $(N,g_{N})$ has a uniform bounded diameter after the  scale $r^{-1}$ (cf. Section 3).  The method is to estimate the distance function along $\Sigma_r$  as done in \cite{DZ6}.  Next, we show that the rescaled   ($\Sigma_{r_i}, r_i^{-1}g_{N})$  is an almost flat manifold  for any  $i\ge i_0$ if there is a sequence of $p_i\in N$ such that $\lim_{i\to\infty}R_N(p_i)\rho_N(p_i)=0$, where $f_N(p_i)=r_i$ (cf. Section 4). Then   by the famous   Gromov  Theorem for the almost flat manifolds \cite{G} (also see Theorem \ref{Gromov's theorem}), we will derive a contradiction since  $\Sigma_r$ is diffeomorphic  to a sphere \cite[Lemma 2.1]{DZ6}.

The paper is organized as follows. In Section 2, we recall  some previous results for steady Ricci soliton with nonnegative sectional curvature in \cite{DZ6} and give a curvature estimate (cf. Proposition \ref{lem-pointwise curvature estimate}).  In Section 3, we give a diameter  estimate of level sets (cf. Proposition  \ref{theorem-diameter estimate}).  Theorem \ref{dimension reduction theorem},
Theorem \ref{theo-small decay constant} and Theorem \ref{cor-4d-Ricci positive}  are proved in Section 4,5,6, respectively.

\section{ Preliminary and  curvature estimate}\label{section 2}

 $(M,g,f)$ is called a  gradient steady Ricci soliton if  Ricci curvature  ${\rm Ric}(\cdot)$ of $g$ on $M$ satisfies
\begin{align}\label{soliton-equation}
{\rm Ric}={\rm Hess} ~f,
\end{align}
for some smooth function $f\in C^{\infty}(M)$.

Let $\phi_t$ be   one-parameter diffeomorphisms   group  generated by $-\nabla f$  on $M$.  Then $g(t)=\phi_t^{\ast}g$ satisfies Ricci flow,
\begin{align}\label{Ricci flow equation}
\frac{\partial g(t)}{\partial t}=-2{\rm Ric}(\cdot,t).
\end{align}

In this section, we always  assume the steady Ricci soliton $(M,g,f)$ has nonnegative Ricci curvature and $M$ admits  an equilibrium point $o$ such that $\nabla f(o)=0$. Note that  $R$ is nonnegative by Chen's  result  in \cite{Ch}. Then by the identity
\begin{align}\label{identity}|\nabla f|^2+R\equiv const,
\end{align}
it is easy to see that
$$R(o)=R_{\max}=\max_{x\in M} R(x).$$

The existence of  equilibrium points on $(M,g)$  is guaranteed by the following  lemma \cite[Lemma 5.1]{DZ6}.

\begin{lem}\label{lem-level set structure nonnegative case}
Let $(M,g,f)$ be a   non-flat steady Ricci soliton with nonnegative sectional curvature.  Let $S=\{p\in M|\nabla f(p)=0\}$ be equilibrium points set of  $(M,g,f)$. Suppose that
 scalar curvature $R$ of $g$ decays uniformly.    Then the following statements are true.

 \begin{enumerate}
 \item[(1)]$(S,g_S)$ is a non-empty  compact flat manifold, where $g_{S}$ is an induced metric $g$.
 \item[(2)]Let $o\in S$.  Then level set $\Sigma_r=\{x\in M|~f(x)=r\}$ is a compact hypersurface of $M$. Moreover,  each  $\Sigma_r$ is   diffeomorphic to the other  whenever  $r>f(o)$.
\item[(3)] $M_r=\{x\in M|~f(x)\le r\}$ is  compact for any $r>f(o)$.
\item[(4)] $f$ satisfies
\begin{align} \label{linear of f}
c_1\rho(x)\le f(x)\le c_2 \rho(x), ~\forall~\rho(x)\ge r_0.
\end{align}

 \end{enumerate}

\end{lem}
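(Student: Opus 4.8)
The plan is to organize the argument around the gradient flow of $f$ and the soliton identity, treating the four statements essentially in order but drawing on a uniform-decay hypothesis repeatedly. First I would recall from the identity \eqref{identity} that $|\nabla f|^2 = C_0 - R$ where $C_0 = R_{\max}$, so $\nabla f$ vanishes exactly on the set $S$ where $R$ attains its maximum. For statement (1): since $R \to 0$ at infinity (by the uniform decay hypothesis) and $R_{\max}>0$ (non-flatness, using Chen's result that $R\ge 0$ and the strong maximum principle to exclude $R\equiv 0$), the set $S = \{R = R_{\max}\}$ is a nonempty closed subset contained in a sublevel set of $R$, hence compact. To see $S$ is a flat submanifold, I would use that at a point of $S$ we have ${\rm Hess}\,f = {\rm Ric}$ and $\nabla R = -2\,{\rm Ric}(\nabla f,\cdot) = 0$ there; combining the standard soliton identities ($\Delta f = R$, $\nabla_i R = 2R_{ij}\nabla_j f$, and the evolution of ${\rm Ric}$) with the fact that $S$ is the maximum set of $R$ should force ${\rm Ric}$ to degenerate in the directions tangent to $S$, so that $(S,g_S)$ is totally geodesic and its induced curvature vanishes — this is the point where one invokes nonnegativity of sectional curvature crucially, since it makes the Hessian comparison at the max rigid.

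For statements (2) and (3) I would run the gradient flow of $f$ (equivalently the diffeomorphisms $\phi_t$ generated by $-\nabla f$). Away from $S$ we have $|\nabla f|>0$, so $f$ has no critical points with value $> f(o)$; the flow of $\nabla f/|\nabla f|^2$ gives diffeomorphisms $\Sigma_r \cong \Sigma_{r'}$ for all $r,r' > f(o)$, provided the flow does not escape to infinity in finite $f$-time. To control that, I would estimate $|\nabla f|$ from below on $\{f \le r\}\setminus (\text{nbhd of }S)$: since $R \le C/\rho$, we get $|\nabla f|^2 = C_0 - R \ge C_0 - C/\rho \ge C_0/2$ once $\rho$ is large, so $\nabla f$ is uniformly bounded below at infinity and the flow line reaching level $r$ stays in a bounded region — this gives compactness of $M_r$ (statement (3)) and then properness of $f$ and the diffeomorphism type statement (2); that $\Sigma_r$ is a smooth compact hypersurface is then immediate since $r$ is a regular value.

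Statement (4), the linear bound $c_1\rho \le f \le c_2\rho$, I would get from integrating $|\nabla f|$ along minimal geodesics. The upper bound $f(x) \le f(o) + \rho(x)$ is trivial from $|\nabla f| \le \sqrt{C_0}$. For the lower bound, along a minimizing geodesic $\gamma$ from $o$ to $x$ one has $\frac{d}{ds} f(\gamma(s)) = \langle \nabla f, \dot\gamma\rangle$; since ${\rm Ric}\ge 0$ forces $f$ to be (weakly) convex along geodesics and proper, $f(\gamma(s))$ is eventually increasing, and once $\rho$ is large $|\nabla f| \ge \sqrt{C_0/2}$ while $\dot\gamma$ points "outward" in the sense that $\langle \nabla f,\dot\gamma\rangle$ is bounded below by a positive constant (using that $\nabla f/|\nabla f|$ and the radial direction make a controlled angle — this follows from the first variation and the fact that level sets are compact so the geodesic must cross them transversally). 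The expected main obstacle is precisely this last step: ensuring the angle between $\nabla f$ and the minimizing direction stays bounded away from $\pi/2$, so that $f$ grows at a definite linear rate rather than sublinearly; I would handle it by a barrier/comparison argument using the compactness of each $\Sigma_r$ together with the convexity of $f$, or alternatively cite the corresponding estimate from \cite{DZ6} where this type of distance-function analysis along level sets was carried out.
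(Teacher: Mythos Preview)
The paper does not prove this lemma: it is quoted verbatim as \cite[Lemma 5.1]{DZ6} and no argument is given beyond that citation. So there is nothing to compare your proof against in the paper itself; your final fallback of ``cite the corresponding estimate from \cite{DZ6}'' is in fact exactly what the authors do.

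That said, as a self-contained argument your sketch has two real weak spots. First, in part~(1) you assert that nonnegativity of sectional curvature ``makes the Hessian comparison at the max rigid'' and hence forces $S$ to be a totally geodesic flat submanifold. This is not a local computation at a maximum; it requires the de\,Rham-type splitting that underlies \cite[Theorem 1.1]{GLX} (and the proof of \cite[Lemma 5.1]{DZ6}): one shows the null space of ${\rm Ric}={\rm Hess}\,f$ is a parallel distribution, so the universal cover splits off a Euclidean factor, and $S$ is the image of that factor. Without invoking some form of this splitting you cannot conclude that $S$ is even a manifold, let alone flat.

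Second, for the lower bound in~(4) you have correctly identified the obstacle but chosen the wrong tool. Trying to control the angle between $\nabla f$ and a \emph{minimizing geodesic} is both hard and unnecessary. The clean argument bypasses geodesics entirely: once you know $|\nabla f|\ge c>0$ outside a compact set, the integral curve of $\nabla f/|\nabla f|^2$ joining $\Sigma_{r_0}$ to $x$ has $g$-length
\[
\int_{r_0}^{f(x)}\frac{dr}{|\nabla f|}\ \le\ \frac{f(x)-r_0}{c},
\]
so $\rho(x)\le C+\frac{f(x)}{c}$, which is the desired linear lower bound. This is precisely the mechanism the paper uses in the \emph{next} lemma (Lemma~\ref{lem-precise estimate of f}) to refine~\eqref{linear of f} to~\eqref{two-sides}, so it is worth internalizing.
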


The constants $c_1$ and $c_2$ in (\ref{linear of f})  can be estimated  more precisely.

\begin{lem}\label{lem-precise estimate of f}
Let $(M,g,f)$ be a steady Ricci soliton with nonnegative Ricci curvature.  Suppose  that $f$ satisfies $(\ref{linear of f})$ and the scalar curvature $R$ decays uniformly. Then the following statements are true.

 \begin{enumerate}
 \item[1). ] $f(x)$ satisfies
\begin{align}\label{limit constant}
 \frac{f(x)}{\rho(x)}\to \sqrt{R_{\max}}, ~as~\rho(x)\to\infty.
\end{align}
 \item[2). ] If $R(x)\le \frac{C}{\rho(x)}$, then there are constants $C_1, C_2>0$ such that
 \begin{align}\label{two-sides}
 -C_1\sqrt{\rho(x)}+\sqrt{R_{\max}}\rho(x)\le f(x)\le \sqrt{R_{\max}}\rho(x) + C_2,~\forall~\rho\ge r_0.
 \end{align}
\end{enumerate}
\end{lem}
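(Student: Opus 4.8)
The plan is to deduce everything from the soliton identity \eqref{identity}. Since $R\geq 0$ and $R$ decays uniformly, the hypothesis \eqref{linear of f} (with $c_1>0$) makes $f$ proper and bounded below, so $f$ attains its minimum at some point $o$ with $\nabla f(o)=0$; evaluating \eqref{identity} at $o$ gives $|\nabla f|^2=R_{\max}-R$ on all of $M$, so $0\leq|\nabla f|\leq\sqrt{R_{\max}}$ everywhere and $|\nabla f(x)|\to\sqrt{R_{\max}}$ as $\rho(x)\to\infty$ (and $R_{\max}>0$, else \eqref{linear of f} fails). The bound \eqref{linear of f} also makes all sublevel sets $M_r$ and level sets $\Sigma_r$ compact and, together with the decay of $R$, makes the equilibrium set $S=\{\nabla f=0\}$ compact, as in Lemma~\ref{lem-level set structure nonnegative case}. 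Fix once and for all a value $a_1>\max_S f$: then $\Sigma_{a_1}$ is a compact regular level set, no point of $\{f\geq a_1\}$ is critical for $f$, and $|\nabla f|$ has a positive lower bound on each compact slab $\{a_1\leq f\leq K\}$; put $D_1=\max_{\Sigma_{a_1}}\rho<\infty$.

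The upper bounds are immediate: integrating $\langle\nabla f,\dot\gamma\rangle\leq|\nabla f|\leq\sqrt{R_{\max}}$ along a minimal geodesic from $o$ to $x$ gives $f(x)\leq\sqrt{R_{\max}}\,\rho(x)+f(o)$, hence $\limsup_{\rho(x)\to\infty}f(x)/\rho(x)\leq\sqrt{R_{\max}}$ and the upper inequality in \eqref{two-sides} with $C_2=|f(o)|+1$. For the lower bounds I would integrate $|\nabla f|$ along the gradient trajectory through $x$ rather than along a geodesic (along a geodesic one has no lower bound for $\langle\nabla f,\dot\gamma\rangle$). Let $\sigma:[0,L]\to M$ be the unit-speed integral curve of $\nabla f/|\nabla f|$ with $\sigma(0)\in\Sigma_{a_1}$, $\sigma(L)=x$ and $f\circ\sigma$ strictly increasing; this arc is well defined and of finite length, since running it backwards from $x$ the function $f$ decreases at a definite rate while it stays in the compact slab $\{a_1\leq f\leq f(x)\}$, which it can leave only through $\Sigma_{a_1}$ (it cannot run off to infinity there because $f\geq c_1\rho$). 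Since $\sigma$ joins $\Sigma_{a_1}$ to $x$ we have $L\geq d(\sigma(0),x)\geq\rho(x)-D_1$, and
\[
f(x)-a_1=\int_0^L|\nabla f(\sigma(s))|\,ds=\int_0^L\sqrt{R_{\max}-R(\sigma(s))}\,ds .
\]
For part 1): given $\varepsilon\in(0,R_{\max})$, choose $A_\varepsilon$ with $R<\varepsilon$ outside $B_{A_\varepsilon}(x_0)$; by \eqref{linear of f}, once $f(\sigma(s))$ passes a fixed ($\varepsilon$-dependent) threshold one has $\rho(\sigma(s))\geq A_\varepsilon$, hence $R(\sigma(s))<\varepsilon$, and this happens after a bounded arc length $s_\varepsilon$. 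Therefore $f(x)-a_1\geq\sqrt{R_{\max}-\varepsilon}\,(L-s_\varepsilon)\geq\sqrt{R_{\max}-\varepsilon}\,(\rho(x)-D_1-s_\varepsilon)$, so $\liminf_{\rho(x)\to\infty}f(x)/\rho(x)\geq\sqrt{R_{\max}-\varepsilon}$; letting $\varepsilon\to0$ proves \eqref{limit constant}.

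For part 2) I would feed the elementary bound $\sqrt{R_{\max}-R}\geq\sqrt{R_{\max}}-R/\sqrt{R_{\max}}$ into the last display to get
\[
f(x)\geq a_1+\sqrt{R_{\max}}\,L-\frac{1}{\sqrt{R_{\max}}}\int_0^L R(\sigma(s))\,ds ,
\]
which reduces the whole problem to showing $\int_0^L R(\sigma(s))\,ds$ is negligible compared with $\rho(x)$. Changing the variable to $f$ via $ds=df/|\nabla f|$ and using that $(R_{\max}-R)^{-1/2}$ is bounded on $\{f\geq a_1\}$ (bounded on the compact part, and $R\leq C/\rho\to0$ beyond), this integral is $\lesssim\int_{a_1}^{f(x)}R\,df$; splitting the $f$-range at $c_2r_0$ and using $R\leq C/\rho\leq Cc_2/f$ for $f\geq c_2r_0$ (again \eqref{linear of f}) bounds it by a constant plus $Cc_2\log f(x)$, hence by a constant plus $Cc_2\log\rho(x)$ since $f(x)\leq\sqrt{R_{\max}}\,\rho(x)+C_2$. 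Substituting this and $L\geq\rho(x)-D_1$ back yields $f(x)\geq\sqrt{R_{\max}}\,\rho(x)-C_5-C_6\log\rho(x)$, and since $\log\rho(x)/\sqrt{\rho(x)}$ is bounded for $\rho(x)\geq r_0$ this gives $f(x)\geq\sqrt{R_{\max}}\,\rho(x)-C_1\sqrt{\rho(x)}$ (indeed with $\log\rho(x)$ rather than $\sqrt{\rho(x)}$), which is the lower inequality in \eqref{two-sides}.

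The step I expect to be delicate is the use and control of the gradient trajectory $\sigma$: one must know it is defined all the way back to the fixed compact level set $\Sigma_{a_1}$, that its length overshoots $\rho(x)$ only by a bounded amount, and that the correction term $\int_0^L R(\sigma(s))\,ds$ is $o(\rho(x))$. All three rest on the structural input — compactness of $S$ and of the sublevel and level sets, and the absence of equilibrium points above the level $\max_S f$ — provided by \eqref{identity}, \eqref{linear of f} and Lemma~\ref{lem-level set structure nonnegative case}, together with the linear decay of $R$ used in part 2).
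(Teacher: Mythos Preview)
Your argument is correct, but the route to the lower bound differs from the paper's. The paper also runs the gradient flow from $x$, but only back to an \emph{intermediate} metric sphere $\partial B(o,r)$ rather than all the way to a fixed level set, and then exploits the Ricci nonnegativity through the monotonicity $\frac{d}{dt}|\nabla f|^2(\phi_t(x))=-2{\rm Ric}(\nabla f,\nabla f)\le 0$: this lets one factor $\int_0^{t_x}|\nabla f|^2\,dt\ge |\nabla f|(\phi_{t_x}(x))\int_0^{t_x}|\nabla f|\,dt\ge |\nabla f|(\phi_{t_x}(x))\,d(x,\phi_{t_x}(x))$, yielding $f(x)\ge \sqrt{R_{\max}-R_r}\,(\rho(x)-r)+\underline f(r)$; choosing $r=\sqrt{\rho(x)}$ then gives both \eqref{limit constant} and the $\sqrt{\rho}$ error in \eqref{two-sides}. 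Your approach bypasses this monotonicity entirely: you integrate $|\nabla f|$ along the full trajectory back to $\Sigma_{a_1}$, use $L\ge \rho(x)-D_1$, and control the defect $\int_0^L R(\sigma(s))\,ds$ by the change of variables $ds=df/|\nabla f|$ together with $R\lesssim 1/f$. The payoff is that you never invoke ${\rm Ric}\ge 0$ beyond $R\ge 0$, and you actually obtain a sharper remainder $O(\log\rho(x))$ rather than $O(\sqrt{\rho(x)})$; the paper's argument, in exchange, avoids having to track the whole integral curve and is slightly slicker once the monotonicity is in hand.
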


\begin{proof} 1).   Since $f$ satisfies $(\ref{linear of f})$, it is easy to see  by (\ref{soliton-equation}) that $R$ is not identically zero and so $R_{\max}>0$. Note that  $R$ decays uniformly.  Thus, by (\ref{identity}),
 \begin{align}\label{nonzero of gradient f-2}
 |\nabla f|^2(x)>\frac{\sqrt{R_{\max}}}{2},~\forall~f(x)\ge r_0.
 \end{align}

  Choose $r_1>0$ such that $\Sigma_{r_0}\subseteq B(o,r)$ for all $r\ge r_1$, where $o\in M$ is a fixed point.  Thus  there exists $\tau_x>0$ for any $x\in M\setminus B(o,r)$  such that $\phi_{\tau_x}(x)\in \Sigma_{r_0}$ by (\ref{nonzero of gradient f-2}).   It follows that  there is a  $t_x>0$ such that $\phi_{t_x}(x)\in\partial B(o,r)$. By integrating along the curve $\phi_s(x)$, we have
\begin{align}
f(x)-f(\phi_{t_x}(x))=\int_{0}^{t_x}|\nabla f|^2(\phi_{s}(x)) {\rm d}s,\notag
\end{align}
and
\begin{align}
d(x,\phi_{t_x}(x))\le{\rm Length}(\phi_s(x)|_{[0,t_x]},g)=\int_{0}^{t_x}|\nabla f|(\phi_{s}(x)) {\rm d}s.\notag
\end{align}
Since
\begin{align}
\frac{{\rm d}|\nabla f|^2}{{\rm d}t}(\phi_{t}(x))=-2{\rm Ric}( \nabla f,\nabla f)\le 0, ~\forall~t\in \mathbb{R}, \notag
\end{align}
\begin{align}
|\nabla f|(\phi_{s}(x))\ge |\nabla f|(\phi_{t_x}(x)), ~\forall~s\in[0,t_x].\notag
\end{align}
Consequently,
\begin{align}
f(x)-f(\phi_{t_x}(x))=&\int_{0}^{t_x}|\nabla f|^2(\phi_{s}(x)) {\rm d}s \notag\\
\ge& |\nabla f|(\phi_{t_x}(x))\int_{0}^{t_x}|\nabla f|(\phi_{s}(x)) {\rm d}s  \notag\\
\ge& |\nabla f|(\phi_{t_x}(x))d(x,\phi_{t_x}(x)).\notag
\end{align}
Note that  $\phi_{t_x}(x)\in\partial B(o,r)$.  We derive
\begin{align}
d(x,\phi_{t_x}(x))\ge \rho(x)-\rho(\phi_{t_x}(x))=\rho(x)-r\notag
\end{align}
and
\begin{align}
|\nabla f|(\phi_{t_x}(x))=\sqrt{R_{\max}-R(\phi_{t_x}(x))}\ge \sqrt{R_{\max}-R_r},\notag
\end{align}
where $R_r=\sup_{y\in \partial B(o,r)}R(y)$.   Therefore,   we obtain
\begin{align}\label{f-growth}
f(x)\ge \sqrt{R_{\max}-R_r}(\rho(x)-r)+\underline {f}(r),
\end{align}
where $\underline{f}(r)=\inf_{y\in \partial B(o,r)}f(y)$.

For $\rho(x)\gg 1$, we  take $r=\sqrt{\rho(x)}$.  Then,  by (\ref{linear of f}) and (\ref{f-growth}), we get
\begin{align}\label{precise estimate}
f(x)\ge \sqrt{R_{\max}-R_{\sqrt{\rho(x)}}}(\rho(x)-\sqrt{\rho(x)})+C'\sqrt{\rho(x)},
\end{align}
On the other hand, by integrating $f$ along the minimal geodesic $\gamma(s)$ connecting $x$ and $o$, we have
\begin{align}\label{upper bound}
f(x)-f(o)\le \sqrt{R_{\max}}\rho(x).
\end{align}
Hence, combining (\ref{precise estimate}) and (\ref{upper bound}), we obtain (\ref{limit constant}).

\item[2). ] Note that
\begin{align}
R_{\sqrt{\rho(x)}}\le \frac{C}{\sqrt{\rho(x)}}.\notag
\end{align}
Then, by (\ref{precise estimate}) and (\ref{upper bound}), we  also get (\ref{two-sides}).
\end{proof}

The following lemma is due to \cite[Lemma 2.2]{DZ6}.

\begin{lem}\label{geodesic ball in level set}
Let $o\in M$ be any fixed point  of   steady  Ricci soliton $(M,g,f)$. Then  for   any $p\in M$ and number $k>0$ with $f(p)-k\sqrt{f(p)}>f(o)$,   it holds
\begin{align}\label{set-mr-contain-1}
B(p,\frac{k}{\sqrt{R_{\max}}};  f^{-1}(p)g)\subset M_{p,k},
\end{align}
where $M_{p,k}\subset M$ is  a subset  defined  by
$$M_{p,k}=\{x\in M| ~ f(p)-k\sqrt{f(p)}\le f(x)\le f(p)+k\sqrt{f(p)}\}.$$

\end{lem}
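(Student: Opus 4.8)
The plan is to transplant the estimate through the gradient flow $\phi_t$ of $-\nabla f$ and then use the monotonicity of $|\nabla f|$ along the flow lines. Fix $p\in M$ and $k>0$ with $f(p)-k\sqrt{f(p)}>f(o)$, and let $x$ be any point in the ball $B(p,\tfrac{k}{\sqrt{R_{\max}}};\,f^{-1}(p)g)$. First I would pick a minimal geodesic $\gamma:[0,\ell]\to M$ from $p$ to $x$, measured in the rescaled metric $f(p)^{-1}g$, so that its $g$-length satisfies $\text{Length}(\gamma,g)=\sqrt{f(p)}\cdot\ell<\sqrt{f(p)}\cdot\tfrac{k}{\sqrt{R_{\max}}}=\tfrac{k}{\sqrt{R_{\max}}}\sqrt{f(p)}$. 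The goal is then to bound the oscillation of $f$ along $\gamma$: I claim $|f(x)-f(p)|\le k\sqrt{f(p)}$, which is exactly the statement that $x\in M_{p,k}$.

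The key point is that $|\nabla f|^2=R_{\max}-R\le R_{\max}$ everywhere, by the identity (\ref{identity}) together with $R(o)=R_{\max}$. Hence along $\gamma$ we simply estimate
\begin{align*}
|f(x)-f(p)|=\Big|\int_0^\ell \langle\nabla f,\dot\gamma(s)\rangle_g\,ds\Big|\le \int_0^\ell |\nabla f|(\gamma(s))\,|\dot\gamma(s)|_g\,ds\le \sqrt{R_{\max}}\cdot\text{Length}(\gamma,g).
\end{align*}
Combining with the length bound above gives $|f(x)-f(p)|\le \sqrt{R_{\max}}\cdot\tfrac{k}{\sqrt{R_{\max}}}\sqrt{f(p)}=k\sqrt{f(p)}$, which is precisely what is needed. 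One should check that $\gamma$ stays inside the region where the computation is valid — but since the bound $|\nabla f|\le\sqrt{R_{\max}}$ holds globally on $M$, no such restriction is needed, and the condition $f(p)-k\sqrt{f(p)}>f(o)$ is only used to guarantee that $M_{p,k}$ sits entirely in the region $\{f>f(o)\}$ where the level-set structure from Lemma \ref{lem-level set structure nonnegative case} applies (so that $M_{p,k}$ is a genuine "collar" and the statement is meaningful).

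I do not expect a serious obstacle here: the lemma is essentially a one-line consequence of the uniform bound $|\nabla f|\le\sqrt{R_{\max}}$ and the definition of the rescaled distance. The only mild subtlety is bookkeeping the two metrics $g$ and $f(p)^{-1}g$ correctly — a geodesic ball of radius $k/\sqrt{R_{\max}}$ in $f(p)^{-1}g$ is a ball of radius $\tfrac{k}{\sqrt{R_{\max}}}\sqrt{f(p)}$ in $g$ — and making sure the hypothesis $f(p)-k\sqrt{f(p)}>f(o)$ is invoked exactly where the ambient geometry (compactness of $M_{p,k}$, connectedness of level sets) is used, rather than in the curvature estimate itself.
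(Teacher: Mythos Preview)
Your proof is correct and matches the paper's argument essentially line for line: both integrate $\langle\nabla f,\dot\gamma\rangle$ along a minimal geodesic and invoke the global bound $|\nabla f|^2=R_{\max}-R\le R_{\max}$ from (\ref{identity}), then convert between the metrics $g$ and $f(p)^{-1}g$. The paper phrases it by contrapositive (if $q\notin M_{p,k}$ then $d(p,q)\ge k\sqrt{f(p)}/\sqrt{R_{\max}}$), while you argue directly, but the content is identical. One small remark: your opening sentence about transplanting via the gradient flow $\phi_t$ and monotonicity of $|\nabla f|$ along flow lines is a red herring --- you never actually use $\phi_t$, and neither does the paper; the geodesic integration is all that is needed.
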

\begin{proof}
For any $q\in M$, let $\gamma(s)$ be a minimal geodesic connecting $p$ and $q$ such that $\gamma(s_{1})=q$ and $\gamma(s_{2})=p$. Since
$$|\nabla f|^2(x)=R_{\max}-R(x)\le R_{\max},~\forall~x\in M,$$
\begin{align*}
|f(p)-f(q)|=|\int_{s_{1}}^{s_{2}}\langle\gamma^{\prime}(s),\nabla f\rangle ds|
\le\int_{s_{1}}^{s_{2}}|\nabla f|ds
\le\sqrt{R_{\max}}d(q,p)
\end{align*}
In particular,  for  $q\in M\setminus M_{p,k}$, we get
\begin{align}
d(q,p)\geq k\sqrt{f(p)}\cdot \frac{1}{\sqrt{R_{\max}}}.\notag
\end{align}
Hence
\begin{align}
B(p,\frac{k}{\sqrt{R_{\max}}}; f^{-1}(p)g)\subset M_{p,k}.
\end{align}

\end{proof}

We end this section by the following curvature estimate.

\begin{prop}\label{lem-pointwise curvature estimate}
Let $(M,g,f)$ be a steady Ricci soliton with   Ricci  curvature and sectional curvature  which satisfying (\ref{uniform curvature decay}).  Suppose that $f$ satisfies $(\ref{linear of f})$.
Then  there exists a constant $C(k)$ for each $k\in\mathbb{N}$ such that
\begin{align}\label{curvature-estimate-k}
|\nabla^k {\rm Rm}|(p)\cdot f^{\frac{k+2}{2}}(p)\le C(k),~\forall~p\in ~M.
\end{align}
\end{prop}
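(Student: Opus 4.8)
The plan is to derive the higher-order estimates \eqref{curvature-estimate-k} from the zeroth-order bound $|{\rm Rm}|(x) \le C/\rho(x)$ via a parabolic rescaling argument applied to the Ricci flow $g(t) = \phi_t^*g$. The key point is that on a steady soliton the flow is self-similar: curvature bounds at the point $\phi_t(p)$ translate into curvature bounds on $(M,g(t))$ near $p$, and $f$ decays at a controlled rate along the flow lines of $-\nabla f$, so that $f(\phi_t(p)) \approx f(p)$ for $|t|$ comparable to $f(p)$. Because $f$ is equivalent to $\rho$ by \eqref{linear of f}, the hypothesis \eqref{uniform curvature decay} gives $|{\rm Rm}|(x) \le C'/f(x)$ for $f(x)\ge r_0$, and this is the estimate I will propagate.

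First I would fix $p$ with $f(p) =: 2a$ large and consider the backward-and-forward flow line $s \mapsto \phi_s(p)$. Since $\tfrac{d}{ds} f(\phi_s(p)) = -|\nabla f|^2(\phi_s(p))$ and $|\nabla f|^2 = R_{\max} - R \le R_{\max}$ is bounded, while also $\tfrac{d}{ds}|\nabla f|^2 = -2{\rm Ric}(\nabla f,\nabla f) \le 0$ shows $|\nabla f|^2$ is monotone along the flow line, one gets that $f(\phi_s(p))$ stays in, say, $[a, 3a]$ for all $s \in [-\delta a, \delta a]$ with $\delta = \delta(n)$ small. Consequently, on this whole time interval and on a fixed-size $g$-metric ball around $p$ — here one uses Lemma \ref{geodesic ball in level set} to see that the ball $B(p, c\sqrt{a})$ stays inside the region $\{a \le f \le 3a\}$ — one has the uniform curvature bound $|{\rm Rm}(\cdot, s)|_{g(s)} \le C'/a$, i.e. after the parabolic rescaling $\tilde g(\tau) = a^{-1} g(a\tau)$ the rescaled flow has $|\widetilde{\rm Rm}| \le C'$ on a parabolic cylinder of definite size $P(p; \tilde r_0, \tilde T)$ with $\tilde r_0, \tilde T$ depending only on $n$ (and on the noncollapsing-free constants appearing in Lemma \ref{geodesic ball in level set}, namely $R_{\max}$). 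Note $R_{\max}$ is a fixed finite number for the fixed soliton, so this is harmless.

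Next I would invoke Shi's local derivative estimates for the Ricci flow: a bound $|{\rm Rm}| \le K$ on a parabolic cylinder $B_{g(0)}(p, r) \times [0, T']$ gives, for each $k$, an interior bound $|\nabla^k {\rm Rm}|(p, T') \le C(n,k)\, K\,(K^{k/2} + r^{-k} + (T')^{-k/2})$. Applied to the rescaled flow $\tilde g$ with $K = C'$, $r = \tilde r_0$, $T' = \tilde T$ all depending only on $n$, this yields $|\widetilde\nabla^k \widetilde{\rm Rm}|_{\tilde g}(p, \tilde T) \le C(n,k)$. Unwinding the scaling — $\nabla^k{\rm Rm}$ has weight $-(k+2)$ under $g \mapsto a^{-1}g$ — gives $|\nabla^k{\rm Rm}|_g(p) \le C(n,k)\, a^{-(k+2)/2}$, and since $a = f(p)/2$ this is exactly \eqref{curvature-estimate-k} for $f(p) \ge r_0$. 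For the remaining compact region $\{f \le r_0\}$, the bound is automatic by continuity of $\nabla^k{\rm Rm}$ and compactness (Lemma \ref{lem-level set structure nonnegative case}(3)), after possibly enlarging $C(k)$; combining the two ranges and taking $C(k)$ large enough gives the stated estimate on all of $M$.

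The main obstacle is bookkeeping the scaling-invariant geometry in step two: one must certify that after rescaling by $a^{-1}$ the parabolic cylinder on which $|{\rm Rm}|$ is controlled has a size bounded below independently of $p$. This requires both the spatial control — that a $g$-ball of radius $\sim \sqrt{a}$ about $p$ lies in $\{a \le f \le 3a\}$, which is Lemma \ref{geodesic ball in level set} — and the temporal control, that the soliton flow does not move $p$ out of this region in rescaled time of definite length, which is the monotonicity of $|\nabla f|^2$ along $\phi_s$. Neither step uses nonnegativity of sectional curvature beyond what is already packaged in \eqref{uniform curvature decay} and \eqref{linear of f}, so the proposition holds in the stated generality. (An alternative to Shi's estimates is Bando--Bernstein--Shi type elliptic bootstrapping directly from the soliton equation ${\rm Ric} = {\rm Hess}\, f$ together with $|{\rm Rm}| \le C/f$; I would mention this but prefer the parabolic route since the Ricci flow $g(t)$ is already in play.)
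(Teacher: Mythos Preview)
Your proposal is correct and follows essentially the same route as the paper: translate the decay hypothesis into $|{\rm Rm}|\le C'/f$ via (\ref{linear of f}), use Lemma~\ref{geodesic ball in level set} for spatial control and the monotonicity of $f$ along the flow of $-\nabla f$ for temporal control, rescale by $f(p)^{-1}$ to obtain a uniformly bounded Ricci flow on a parabolic cylinder of definite size, apply Shi's local estimates, and unwind the scaling. The paper works only with backward time $t\in[-1,0]$ (using $f(\phi_t(x))\ge f(x)$ for $t\le 0$ together with ${\rm Ric}\ge 0$ to nest the balls $B(p,r;g_p(t))\subseteq B(p,r;g_p(0))$), whereas you allow a two-sided interval; this is a cosmetic difference, since Shi's estimate only requires a backward parabolic cylinder.
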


\begin{proof}
 Fix any $p\in M$ with $f(p)\ge 2r_0>>1$. Then
\begin{align}
|f(x)-f(p)|\le \sqrt{f(p)},~ \forall ~x\in M_{p,1}.\notag
\end{align}
By Lemma \ref{lem-precise estimate of f}, we may assume that
\begin{align}
\frac{\sqrt{R_{\max}}}{2}\rho(x)\le f(x)\le \sqrt{R_{\max}}\rho(x),~\forall~\rho(x)\ge r_0.\notag
\end{align}
Thus, by (\ref{uniform curvature decay}), it holds  for any $x\in M_{p,1}$,
\begin{align}
|{\rm Rm}(x)|\cdot f(p)\le  C\sqrt{R_{\max}}\frac{f(p)}{f(x)}\le \frac{ C\sqrt{R_{\max}}f(p)}{f(p)-\sqrt{f(p)}}\le 2C\sqrt{R_{\max}}.\notag
\end{align}
By (\ref{set-mr-contain-1}) in Lemma \ref{geodesic ball in level set}, we get
\begin{align}\label{rm-point}
|{\rm Rm}(x)|\cdot f(p)\le C^{\prime},~\forall ~x\in B(p,\frac{1}{\sqrt{R_{\max}}};g_p)\subseteq M_{p,1},
\end{align}
where $g_p=f^{-1}(p)g$.

Since
\begin{align}
\frac{\partial}{\partial t}f(\phi_t(p))=-|\nabla f|^2(x,t)\le0,\notag
\end{align}
\begin{align}
f(\phi_t(x))\ge f(x)\ge r_0,~\forall~t\le0,~x\in B(p,\frac{1}{\sqrt{R_{\max}}};g_p).\notag
\end{align}
Combining with (\ref{uniform curvature decay}), we obtain from (\ref{rm-point}),
\begin{align}
|{\rm Rm}_{g_p(t)}(x)|\le \frac{\sqrt{CR_{\max}}}{f(x)}\le\frac{C'}{f(p)},~\forall~x\in B(p,\frac{1}{\sqrt{R_{\max}}};g_p(0)),~t\in [-1,0],\notag
\end{align}
where  ${\rm Rm}_{g_p(t)}$ is the   curvature tensor of rescaled flow  $ g_p(t)=f^{-1}(p) g(f(p) t)$.

Note that   the Ricci curvature of $(M,g)$ is nonnegative when $f(x)\ge r_0$ and $g_p(t)$    satisfies   Ricci flow (\ref{Ricci flow equation}).
 Then  the  flow $g_p(t)$ is shrinking on $B(p,\frac{1}{\sqrt{R_{\max}}};g_p(0))$  by Lemma \ref{geodesic ball in level set}. Thus
\begin{align}
B(p,\frac{1}{\sqrt{R_{\max}}};g_p(t))\subseteq B(p,\frac{1}{\sqrt{R_{\max}}};g_p(0)),~\forall ~t\le0.\notag
\end{align}
By Shi's higher order estimates, we get
\begin{align}
|\nabla^k_{(g_p(t))} {\rm Rm}_{g_p(t)}|(x)\le C^{\prime}(k),~\forall~x\in B(p,\frac{1}{ 2\sqrt{R_{\max}}};g_p(-1)),~t\in  [-\frac{1}{2},0].\notag
\end{align}
It follows that
\begin{align}
|\nabla^k {\rm Rm}|(x)\cdot f^{\frac{k+2}{2}}(p)\le C^{\prime}(k),~\forall~x\in B(p,\frac{1}{2\sqrt{R_{\max}}};g_p(-1)).\notag
\end{align}
In particular, we obtain
\begin{align}
|\nabla^k {\rm Rm}|(p)\cdot f^{\frac{k+2}{2}}(p)\le C_{2}^{\prime}(k),~{\rm as}~f(p)\ge 2r_0.\notag
\end{align}
The proposition  is  proved.
\end{proof}

\begin{rem}
When $R_{\max}=1$,  according to the proof of Proposition \ref{lem-pointwise curvature estimate}, it is easy to see
by Lemma \ref{lem-precise estimate of f} that  the  constant $C(k)$ in (\ref{curvature-estimate-k}) depends only on $k$ and $C$,
where $C$ is the constant in (\ref{uniform curvature decay}).
\end{rem}

\section{diameter estimate}\label{section 3}

In this section,  we follow the argument  in \cite{DZ6}  to give a diameter estimate for steady Ricci solitons with Ricci   curvature and sectional curvature which satisfying (\ref{uniform curvature decay}).

Since $g(t)=\phi_t(g)$  is a solution of  Ricci flow (\ref{Ricci flow equation}),  the Ricci curvature $R_{ij}(\cdot, t)$  of $g(t)$  satisfies \cite{H2},
\begin{align}
\frac{\partial R_{ij}}{\partial t}=\Delta R_{ij}+2R_{kijl}R_{kl}-2R_{ik}R_{jk}.
\end{align}
Note that
\begin{align}
{L}_{X}R_{ij}=-\frac{\partial R_{ij}}{\partial t}\notag
\end{align}
and
\begin{align}
{L}_{X}R_{ij}-\nabla_X R_{ij}={\rm Ric}(\nabla_{e_i}X,e_j)+{\rm Ric}(e_i,\nabla_{e_j} X)=2R_{ik}R_{jk},\notag
\end{align}
where $X=\nabla f$. Then,
\begin{align}
\nabla_X R_{ij}+\Delta R_{ij}+2R_{kijl}R_{kl}=0.\notag
\end{align}
On the other hand, by the soliton equation (\ref{soliton-equation}), we have
\begin{align}\label{commutation}
\nabla_iR_{jk}-\nabla_jR_{ik}=\nabla_i\nabla_j\nabla_k f-\nabla_j\nabla_i\nabla_k f=-R_{ijkl}\nabla_l f.
\end{align}
Hence,
\begin{align}\label{curvature-x}
{\rm Rm}(X,e_j,e_k,X)=(\nabla {\rm Ric})(e_j,X,e_k)-(\nabla {\rm Ric})(X,e_j,e_k).
\end{align}

By taking trace of (\ref{commutation}) and the contracted Bianchi Identity, we also have
\begin{align}
{\rm Ric}(X,e_i)=-\frac{1}{2}\nabla_{e_i} R.\notag
\end{align}
Then
\begin{align}\label{nabla-ricci}
(\nabla {\rm Ric})(e_j,X,e_k)=&\frac{\partial ({\rm Ric}(X,e_k))}{\partial x_j}-{\rm Ric}(X,\nabla_{e_j}e_k)-{\rm Ric}(\nabla_{e_j}X,e_k)\notag\\
=&-\frac{1}{2}\frac{\partial^2 R}{\partial x_j\partial x_k}+\frac{1}{2}\langle\nabla_{e_j}e_k,\nabla R\rangle-R_{jl}R_{lk}\notag\\
=&-\frac{1}{2}({\rm Hess}R)_{jk}-R_{jl}R_{kl}.
\end{align}
Therefore,  combining  (\ref{curvature-x}) and (\ref{nabla-ricci}), we get

\begin{lem}\label{lem-formula of sectional curvature}
\begin{align}
{\rm Rm}(X,e_j,e_k,X)=-\frac{1}{2}({\rm Hess}R)_{jk}-R_{jl}R_{kl}+\Delta R_{jk}+2R_{ijkl}R_{il}
\end{align}
\end{lem}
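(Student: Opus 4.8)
The assertion is just the bookkeeping combination of the three identities already established in this section, so the plan is to assemble them. Recall that (\ref{curvature-x}) writes the radial sectional curvature as the difference $(\nabla{\rm Ric})(e_j,X,e_k)-(\nabla{\rm Ric})(X,e_j,e_k)$, and that (\ref{nabla-ricci}) already evaluates the first of these as $-\frac{1}{2}({\rm Hess}R)_{jk}-R_{jl}R_{kl}$. Hence everything reduces to computing the second term, which by the definition of the covariant derivative of ${\rm Ric}$ equals $\nabla_X R_{jk}$.

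For this I would invoke the identity $\nabla_X R_{ij}+\Delta R_{ij}+2R_{kijl}R_{kl}=0$ obtained just above, itself a consequence of Hamilton's evolution equation for ${\rm Ric}$ along $g(t)=\phi_t^{\ast}g$ combined with the Lie-derivative relations ${L}_X R_{ij}=-\frac{\partial R_{ij}}{\partial t}$ and ${L}_X R_{ij}-\nabla_X R_{ij}=2R_{ik}R_{jk}$. Relabelling the free and contracted indices so that this reads $\nabla_X R_{jk}=-\Delta R_{jk}-2R_{ijkl}R_{il}$ (only dummy indices are being renamed), and substituting the two expressions into (\ref{curvature-x}) gives
\[
{\rm Rm}(X,e_j,e_k,X)=\Big(-\frac{1}{2}({\rm Hess}R)_{jk}-R_{jl}R_{kl}\Big)-\Big(-\Delta R_{jk}-2R_{ijkl}R_{il}\Big),
\]
which is exactly the claimed formula once the bracket is expanded.

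Since all the inputs are already in place, there is no genuine obstacle here; the one point needing attention --- the ``main difficulty'' in name only --- is keeping the index placements and signs consistent across the three sources, in particular ensuring that the contracted curvature term is written with the same ordering of indices in (\ref{curvature-x}) and in the Ricci evolution identity, so that the two contributions collapse to the single term $2R_{ijkl}R_{il}$. After fixing conventions this is a one-line check.
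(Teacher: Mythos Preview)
Your proposal is correct and follows exactly the paper's approach: the paper derives the three identities you cite (namely the elliptic identity $\nabla_X R_{ij}+\Delta R_{ij}+2R_{kijl}R_{kl}=0$, equation (\ref{curvature-x}), and equation (\ref{nabla-ricci})) in the paragraphs immediately preceding the lemma, and then simply states that combining (\ref{curvature-x}) with (\ref{nabla-ricci}) gives the result. Your write-up makes the index relabelling and the final substitution explicit, which the paper leaves to the reader, but the argument is identical.
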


\begin{lem}\label{lem-differential inequality of distance function}
Let $(M,g,f)$ be a steady Ricci soliton with   Ricci  curvature and curvature tensor  which satisfying (\ref{uniform curvature decay}).
 Suppose that $f$ satisfies $(\ref{linear of f})$.
Then for $x_1,x_2\in\Sigma$
with $d_{r}(x_1,x_2)\ge 2\tau_0$, we have
\begin{align}
\frac{\rm d}{{\rm d}r}d_r(x_1,x_2)\le C(\frac{\tau_0}{r}+\frac{1}{\tau_0}+\frac{d_{r}(x_1,x_2)}{r^{2}}),~\forall~r\ge r_0.
\end{align}
where $d_r(\cdot,\cdot)$ is the distance function of $(\Sigma,h_r)$.
\end{lem}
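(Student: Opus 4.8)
The plan is to track the evolution of the intrinsic distance $d_r(x_1,x_2)$ along a family of curves inside the level sets $\Sigma_r$ and estimate the derivative using the second fundamental form together with the curvature decay from Proposition~\ref{lem-pointwise curvature estimate}. First I would set up the foliation: since $|\nabla f|^2 = R_{\max}-R>0$ for $f\ge r_0$, the level sets $\Sigma_r$ form a smooth foliation, and the unit normal is $\nu = \nabla f/|\nabla f|$. The natural flow moving $\Sigma_r$ to $\Sigma_{r'}$ is along $\nabla f/|\nabla f|^2$ (which increases $f$ at unit rate), so the induced metric $h_r$ evolves by $\partial_r h_r = 2\,\mathrm{II}/|\nabla f|^2$ where $\mathrm{II}=\mathrm{Hess}\,f|_{T\Sigma}={\rm Ric}|_{T\Sigma}$ is the second fundamental form (up to the normalization from the soliton equation). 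Then for a minimizing geodesic $\gamma$ in $(\Sigma_r,h_r)$ joining $x_1,x_2$, the first-variation-type formula gives
\begin{align}
\frac{\rm d}{{\rm d}r}d_r(x_1,x_2)\le \int_\gamma \frac{{\rm Ric}(\gamma',\gamma')}{|\nabla f|^2}\,ds + (\text{endpoint terms}),\notag
\end{align}
where the endpoint terms come from the fact that a fixed point $x_i\in\Sigma$ is being identified with a moving point on $\Sigma_r$; these are controlled by the tangential component of $\nabla f/|\nabla f|^2$, hence by $|\nabla {\rm Rm}|$ through equation~(\ref{curvature-x}).

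The heart of the estimate is bounding $\int_\gamma {\rm Ric}(\gamma',\gamma')\,ds$. Here I would use Lemma~\ref{lem-formula of sectional curvature}: the relevant curvature ${\rm Rm}(X,e_j,e_k,X)$ is expressed through $\mathrm{Hess}\,R$, ${\rm Ric}^2$, $\Delta{\rm Ric}$ and $2R_{ijkl}R_{il}$. Combined with Proposition~\ref{lem-pointwise curvature estimate} (which gives $|{\rm Rm}|\le C/f$, $|\nabla{\rm Rm}|\le C/f^{3/2}$, $|\nabla^2{\rm Rm}|\le C/f^2$ and in particular $|{\rm Ric}|, |\mathrm{Hess}\,R|$ of the appropriate orders), one gets that the Ricci curvature tangent to $\Sigma_r$ is $O(1/r^2)$ in the regions near the geodesic, except possibly near the two endpoints $x_1,x_2$ where the soliton could behave like a paraboloid cap. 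This is exactly why the $\tau_0$ appears: I would split $\gamma$ into two end-segments of $h_r$-length $\tau_0$ near $x_1$ and $x_2$, on which I only use the crude bound ${\rm Ric}\le |{\rm Rm}|\le C/r$ (giving a contribution $\le C\tau_0/r$ from each, after dividing by $|\nabla f|^2\sim R_{\max}$), plus a $1/\tau_0$ term for the Jacobi/focal correction that the second-variation formula contributes near the endpoints where the geodesic may not stay transverse in a controlled way; and the middle portion of length $\le d_r(x_1,x_2)$ on which ${\rm Ric}(\gamma',\gamma')=O(1/r^2)$, giving the $d_r(x_1,x_2)/r^2$ term. The hypothesis $d_r(x_1,x_2)\ge 2\tau_0$ guarantees the two end-segments are disjoint and the middle portion is nonempty.

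The main obstacle — and where the computation in \cite{DZ6} is genuinely doing work — is justifying that the tangential Ricci curvature decays like $1/r^2$ rather than merely $1/r$ away from the poles. The naive bound from (\ref{uniform curvature decay}) only gives $1/r$, which would be too weak (it would not produce a bounded rescaled diameter). The improvement comes from the soliton structure: along $\Sigma_r$ the normal direction carries the "non-decaying" part of the curvature (because of the cigar/cylinder behavior), whereas the purely tangential sectional curvatures are forced to decay faster. Concretely I expect to use Lemma~\ref{lem-formula of sectional curvature} to convert ${\rm Rm}(X,\cdot,\cdot,X)$, together with $|\nabla f|^2\sim R_{\max}$, into a statement that $\mathrm{Ric}|_{T\Sigma_r}$ differs from $\tfrac12|\nabla f|^{-2}$ times a "good" combination of derivatives of $R$ by something of order $|{\rm Rm}|^2\sim 1/r^2$, and then to absorb $\mathrm{Hess}\,R$ and $\Delta{\rm Ric}$ using the $\nabla^k{\rm Rm}$ estimates of Proposition~\ref{lem-pointwise curvature estimate}. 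Once that decay rate is in hand, the integration over $\gamma$ and the endpoint bookkeeping are routine, and one reads off the stated inequality with the three terms $\tau_0/r$, $1/\tau_0$, $d_r(x_1,x_2)/r^2$. Finally I would remark that, exactly as in \cite[Section 3]{DZ6}, integrating this ODE inequality (optimizing $\tau_0\sim\sqrt{r}$) yields the uniform bound on $r^{-1}\mathrm{diam}(\Sigma_r)$ that Proposition~\ref{theorem-diameter estimate} records.
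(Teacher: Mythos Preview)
Your setup is right: the flow along $\nabla f/|\nabla f|^2$ gives $\partial_r h_r = 2{\rm Ric}/|\nabla f|^2$ on $T\Sigma_r$, and hence $\frac{d}{dr}d_r(x_1,x_2) \le |\nabla f|^{-2}\int_\gamma {\rm Ric}(\gamma',\gamma')\,ds$ for a minimizing $h_r$-geodesic $\gamma$ (there are no genuine endpoint terms, since $\nabla f/|\nabla f|^2$ is purely normal to $\Sigma_r$). The gap is your claim that the \emph{tangential} ambient Ricci ${\rm Ric}(\gamma',\gamma')$ decays like $1/r^2$ on the ``middle'' portion of $\gamma$. This is false already in the model case: on the Bryant soliton the level sets $\Sigma_r$ are round spheres of radius $\sim\sqrt r$, and the tangential Ricci is of order $1/r$, not $1/r^2$. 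Lemma~\ref{lem-formula of sectional curvature} cannot help here, since it only controls the single normal sectional curvature ${\rm Rm}(\nabla f,\cdot,\cdot,\nabla f)$, not the full tangential Ricci.

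The paper's device is to insert the \emph{intrinsic} Ricci $\overline{\rm Ric}$ of $(\Sigma_r,\bar g)$ and split ${\rm Ric}=\overline{\rm Ric}+({\rm Ric}-\overline{\rm Ric})$. By the Gauss equation (\ref{gauss-equa}) together with Lemma~\ref{lem-formula of sectional curvature} and Proposition~\ref{lem-pointwise curvature estimate}, one obtains the pointwise bound $|({\rm Ric}-\overline{\rm Ric})(\gamma',\gamma')|\le C/r^2$; integrating this over $\gamma$ is what produces the $d_r(x_1,x_2)/r^2$ term. The remaining integral $\int_\gamma \overline{\rm Ric}(\gamma',\gamma')\,ds$ is handled \emph{not} pointwise (only $\overline{\rm Ric}\le C/r$ is available, which would give the useless $Cd_r/r$) but by the second variation of arclength \emph{inside} $\Sigma_r$: plugging in the Perelman-type test fields $V_i(s)=\varphi(s)e_i(s)$ with a triangular cutoff $\varphi$ of scale $\tau_0$ near the endpoints yields $\int_\gamma \overline{\rm Ric}\,ds\le \frac{2(n-1)}{\tau_0}+\frac{C\tau_0}{r}$. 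Both the $1/\tau_0$ and the $\tau_0/r$ terms arise from this single inequality (the kinetic term $\int|\varphi'|^2$ and the curvature on the ramps, respectively), not from an ``endpoint Jacobi correction'' plus a crude bound on end-segments as you propose.
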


\begin{proof}The proof follows  from the one of  \cite[Lemma 3.1]{DZ6}. When $f(x)$ satisfies $(\ref{linear of f})$ and Ricci   curvature and  curvature tenser satisfies (\ref{uniform curvature decay}), it is easy to check that $S=\{x\in M|\nabla f(x)=0\}$ is non-empty and compact. Then, we may assume that $|\nabla f|^2(x)=R_{\max}-R(x)>0$, $\forall~f(x)>r_0$.  Thus,  every $\Sigma_r$ is diffeomorphic to a compact codimensional $1$ submanifold $\Sigma\subset M$  when $r\ge r_0$.

 As in \cite{DZ6},  we introduce  one parameter group of diffeomorphisms $F_r:\Sigma\to \Sigma_r\subseteq M$ $(r\ge r_0)$, which is generated by flow
$$\frac{\partial F_r}{\partial r}=\frac{\nabla f}{|\nabla f|^2}. $$
Let $h_r=F_r^{\ast}(g)$ and $e_i=F_\ast(\bar{e}_i), e_j=F_\ast(\bar{e}_j)$, where $\bar{e}_i,\bar{e}_j\in T\Sigma$. Then, a direct computation shows that
\begin{align}\label{diff-flow}
\frac{\partial h_r}{\partial r}(\bar{e}_i,\bar{e}_j)
=\frac{2}{|\nabla f|^2}{\rm Ric}(e_i,e_j).
\end{align}

Let $\gamma$ be a normalized minimal geodesic from $x_1$ to $x_2$ with velocity field $X(s)=\frac{{\rm d}\gamma}{{\rm d}s}$ and $V$ any piecewise smooth normal vector field along $\gamma$ which vanishes at the endpoints. By the second variation formula, we have
\begin{align}
\int_{0}^{d_r(x_1,x_2)}(|\nabla_{X}V|^2+\langle \bar R(V,X)V,X\rangle){\rm d}s\ge0.\notag
\end{align}

Let $\{e_i(s)\}_{i=1}^{n-1}$ be a parallel orthonormal frame along $\gamma$ that is perpendicular to $X$. Put $V_i(s)=f(s)e_i(s)$, where $f(s)$ is defined as
\begin{align}
f(s)=\frac{s}{\tau_0},&~\mbox{if}~ 0\le s\le \tau_0;\notag\\
f(s)=1,&~\mbox{if}~\tau_0\le s\le d_r(x_1,x_2)-\tau_0;\notag \\
f(s)=\frac{d_r(x_1,x_2)-s}{\tau_0},&~\mbox{if}~d_r(x_1,x_2)-\tau_0\le s\le d_r(x_1,x_2).\notag
\end{align}
By a direct computation, we have
\begin{align}\label{second-variation}
0\le &\sum_{i=1}^{n-1}\int_{0}^{d_r(x_1,x_2)}(|\nabla_{X}V_i|^2+\langle \bar R(V_i,X)V_i,X\rangle){\rm d}s\notag\\
=&\frac{2(n-1)}{r_0}-\int_{0}^{d_r(x_1,x_2)}\overline {\rm Ric}(X,X){\rm d}s+\int_{0}^{\tau_0}(1-\frac{s^2}{\tau_0^2})\overline {\rm Ric}(X,X){\rm d}s\notag\\
+&\int^{d_r(x_1,x_2)}_{d_r(x_1,x_2)-\tau_0}(1-\frac{(d_r(x_1,x_2)-s)^2}{\tau_0^2})\overline {\rm Ric}(X,X){\rm d}s.
\end{align}

We claim
\begin{align}\label{curvature-decay-2}
\overline {\rm Ric}(X,X)\le \frac{C}{r}\bar g(X,X),~\forall ~x\in \Sigma_r.
\end{align}

By Gauss formula, we have
\begin{align}\label{gauss-equa}
&{\rm Rm}(X,Y,Z,W)=
\overline{\rm Rm}(X,Y,Z,W)\notag\\
&+ \frac{1}{|\nabla f|^2}({\rm Ric}(X,Z){\rm Ric}(Y,W)- {\rm Ric}(X,W){\rm Ric}(Y,Z))
\end{align}
and
\begin{align}
R_{ij}=\overline{R}_{ij}+R(\frac{\nabla f}{|\nabla f|},e_{i},e_{j},\frac{\nabla f}{|\nabla f|})-\frac{1}{|\nabla f|^{2}}\sum_k(R_{ij}R_{kk}-R_{ik}R_{kj}),\notag
\end{align}
where indices $i,j,k$ are corresponding to vector fields on $T\Sigma_{r}$.
Thus for a unit vector $Y$, we derive
\begin{align}\label{ric-barric}
({\rm Ric}-\overline{\rm Ric})(Y,Y)=&R(\frac{\nabla f}{|\nabla f|},Y,Y,\frac{\nabla f}{|\nabla f|})\notag\\
&-\frac{1}{|\nabla f|^2}\sum_{i=1}^{n-1}[{\rm Ric}(Y,Y){\rm Ric}(e_i,e_i)-{\rm Ric}^2(Y,e_i)].
\end{align}

On the other hand, by Lemma \ref{lem-formula of sectional curvature}, we have
\begin{align}
&|\nabla f|^2\cdot{\rm Rm}(\frac{\nabla f}{|\nabla f|},Y,Y,\frac{\nabla f}{|\nabla f|})\notag\\
&=-\frac{1}{2}({\rm Hess }R)(Y,Y)-{\rm Ric}(Y,e_i){\rm Ric}(Y,e_i)\notag\\
&+(\Delta {\rm Ric})(e_j,e_k)+2{\rm Rm}(e_i,Y,Y,e_j){\rm Ric}(e_i,e_j).\notag
\end{align}
Note that by Lemma \ref{lem-pointwise curvature estimate},
\begin{align}
|{\rm Hess }R|+|\Delta {\rm Ric}|+|{\rm Ric}|^2+|{\rm Rm}|\cdot|{\rm Ric}|\le \frac{C_1}{f^2}.\notag
\end{align}
Then
\begin{align}
|{\rm Rm}(\frac{\nabla f}{|\nabla f|},Y,Y,\frac{\nabla f}{|\nabla f|})|\le \frac{C_1 }{|\nabla f|^2\cdot f^2}\le \frac{C}{2r^2}.\notag
\end{align}
Also, we have
\begin{align}
\frac{1}{|\nabla f|^2}|\sum_{i=1}^{n-1}[{\rm Ric}(Y,Y){\rm Ric}(e_i,e_i)-{\rm Ric}^2(Y,e_i)]|
\le \frac{R^2+|{\rm Ric}|^2}{|\nabla f|^2}\le \frac{C}{2r^2}.\notag
\end{align}
Hence, we get from (\ref{ric-barric}),
\begin{align}\label{diference-ricci}
| ({\rm Ric}-\overline{\rm Ric})(Y,Y) | \le \frac{C}{r^{2}}, ~r\ge ~r_0.
\end{align}
In particular,
\begin{align}
\overline {\rm Ric}(Y,Y)\le \frac{C_1}{ r}, ~r\ge ~r_0.\notag
\end{align}
This proves (\ref{curvature-decay-2}).

By (\ref{second-variation}) and (\ref{curvature-decay-2}), it is easy to see that
\begin{align}\label{integral-bar-ricci}
\int_{0}^{d_r(x_1,x_2)}\overline {\rm Ric}(Y,Y){\rm d}s\le\frac{2(n-1)}{\tau_0}+\frac{4C\tau_0}{3r}.
\end{align}
Also by (\ref{diference-ricci}), we see that
\begin{align}\label{ric-differen}\int_{0}^{d_r(x_1,x_2)}({\rm Ric}-\overline{\rm Ric})(Y,Y){\rm d}s\le \frac{C}{r^{2}}d_r(x_1,x_2).
\end{align}
On the other hand, if we
let $Y(s)=(F_r)_{\ast}(X(s))$ with $|Y(s)|_{(\Sigma_r,g)}\equiv 1$, then by (\ref{diff-flow}), we have
\begin{align}
&\frac{{\rm d}}{{\rm d}r}d_r(x_1,x_2)\notag\\=&\frac{1}{2}\int_{0}^{d_r(x_1,x_2)}(L_{\frac{\nabla f}{|\nabla f|^2}}g)(Y,Y){\rm d}s\notag\\
=&\frac{1}{|\nabla f|^2}\int_{0}^{d_r(x_1,x_2)}\overline{\rm Ric}(Y,Y){\rm d}s+\frac{1}{|\nabla f|^2}\int_{0}^{d_r(x_1,x_2)}({\rm Ric}-\overline{\rm Ric})(Y,Y){\rm d}s\notag.
\end{align}
Thus inserting (\ref{integral-bar-ricci}) and (\ref{ric-differen}) into the above relation,
we obtain
$$\frac{{\rm d}}{{\rm d}r}d_r(x_1,x_2)\le C(\frac{\tau_0}{r}+\frac{1}{\tau_0}+\frac{d_{r}(x_1,x_2)}{r^{2}}).$$

\end{proof}

By Lemma \ref{lem-differential inequality of distance function}, we get the following diameter estimate for $(\Sigma_r,\bar g)$  (cf. \cite[Proposition 3.3]{DZ6}).

\begin{prop}\label{theorem-diameter estimate}Let $(M,g,f)$ be a steady Ricci soliton as in Lemma \ref{lem-differential inequality of distance function}.   Then there exists a constant $C$ independent of $r$ such that
\begin{align}
{\rm diam}(\Sigma_r,g)\le C\sqrt{r},~\forall~r\ge r_0.
\end{align}

\end{prop}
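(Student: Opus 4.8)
The plan is to integrate the differential inequality from Lemma~\ref{lem-differential inequality of distance function} along $r$, choosing the auxiliary parameter $\tau_0$ carefully as a function of $r$. Fix two points $x_1,x_2\in\Sigma$ and write $D(r)=d_r(x_1,x_2)$. From Lemma~\ref{lem-level set structure nonnegative case} and the diffeomorphism $F_r$, the function $D(r)$ is defined and (piecewise) differentiable for $r\ge r_0$, and one already knows a crude bound $D(r)\le \mathrm{diam}(\Sigma_r,g)\le C r$ (for instance from $|\nabla f|^2\le R_{\max}$ and the fact that $\Sigma_r\subset M_r$ has intrinsic diameter controlled linearly in $r$, or simply from the trivial growth estimate on the flow $F_r$). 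So the only issue is to upgrade the linear bound to a $\sqrt r$ bound.

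First I would dispose of the regime $D(r)\le 2\tau_0$, where the lemma does not apply: if at some scale $D(r)$ is already $\le 2\tau_0$ with $\tau_0=\sqrt r$, there is nothing to prove at that scale. So assume $D(r)\ge 2\sqrt r$. Taking $\tau_0=\sqrt r$ in Lemma~\ref{lem-differential inequality of distance function} gives
\begin{align}
\frac{\rm d}{{\rm d}r}D(r)\le C\left(\frac{1}{\sqrt r}+\frac{1}{\sqrt r}+\frac{D(r)}{r^2}\right)\le \frac{C_1}{\sqrt r}+\frac{C_1 D(r)}{r^2},~\forall~r\ge r_0.\notag
\end{align}
This is a linear ODE differential inequality for $D$. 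The plan is to solve it by an integrating factor: set $\mu(r)=\exp\!\big(C_1\!\int_{r_0}^r s^{-2}\,{\rm d}s\big)=\exp\!\big(C_1(r_0^{-1}-r^{-1})\big)$, which is bounded between two positive constants for all $r\ge r_0$. Then $\frac{\rm d}{{\rm d}r}\big(\mu(r)^{-1}D(r)\big)\le \mu(r)^{-1}\frac{C_1}{\sqrt r}\le \frac{C_2}{\sqrt r}$. Integrating from $r_0$ to $r$ gives $\mu(r)^{-1}D(r)\le \mu(r_0)^{-1}D(r_0)+2C_2(\sqrt r-\sqrt{r_0})$, hence $D(r)\le C_3\sqrt r + C_3 D(r_0)\le C_4\sqrt r$ for $r\ge r_0$, where in the last step one absorbs the constant $D(r_0)$ using $r\ge r_0$. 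Since this bound is uniform in the pair $(x_1,x_2)$, taking the supremum over $x_1,x_2\in\Sigma_r$ yields $\mathrm{diam}(\Sigma_r,g)\le C\sqrt r$.

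The one technical subtlety—rather than a genuine obstacle—is regularity of $D(r)$ and the handling of the boundary case $D(r)<2\tau_0$: the minimal geodesic realizing $d_r(x_1,x_2)$ may not be unique, so $r\mapsto D(r)$ is only locally Lipschitz, and the differential inequality must be read in the sense of upper Dini derivatives; this is routine and already implicit in \cite[Proposition~3.3]{DZ6}. For the crossover, one notes that the set $\{r\ge r_0: D(r)\ge 2\sqrt r\}$ is where the estimate is needed, and on the complement $D(r)<2\sqrt r$ holds directly; a standard continuity/barrier argument patches the two regimes, so the bound $D(r)\le C\sqrt r$ holds for all $r\ge r_0$. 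This mirrors exactly the argument in \cite{DZ6}, the only difference being that here the curvature hypothesis is the decay condition~(\ref{uniform curvature decay}) rather than nonnegative sectional curvature, which has already been accounted for inside Lemma~\ref{lem-differential inequality of distance function} via Proposition~\ref{lem-pointwise curvature estimate}.
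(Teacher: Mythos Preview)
Your argument is correct and is exactly the intended one: the paper itself gives no detailed proof here but simply cites \cite[Proposition~3.3]{DZ6}, and your derivation---choosing $\tau_0=\sqrt r$ in Lemma~\ref{lem-differential inequality of distance function} to balance the first two terms, then integrating the resulting linear differential inequality with the bounded integrating factor $\exp\!\big(C_1\int_{r_0}^r s^{-2}\,ds\big)$---is precisely that argument. Your handling of the crossover region $D(r)<2\sqrt r$ and the Dini-derivative technicality is also appropriate.
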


\begin{rem}
When $R_{\max}=1$, there exists a constant $C_1$ and $C_2$ independent of $r$ such that
\begin{align}
{\rm diam}(\Sigma_r,g)\le C_1\sqrt{r}+C_2,~\forall~r\ge r_0.
\end{align}
Moreover, $C_1$ only depends on $n$ and $C$, where $C$ is the constant in (\ref{uniform curvature decay}).
\end{rem}

As a corollary, we have

\begin{cor}\label{set-mr-contain-2}
Under the condition of Proposition \ref{theorem-diameter estimate},   there exists  a uniform constant $C_0>0$ such that the following is true:
  for any $k\in\mathbb{N}$, there exists $\bar r_0=\bar r_0(k)$ such that
\begin{align}
M_{p,k}\subset B(p,C_0+\frac{2k}{\sqrt{R_{\max}}} ; f^{-1}(p)g), ~\forall ~ \rho(p)\ge  \bar r_0.
\end{align}
\end{cor}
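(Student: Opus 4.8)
The plan is to combine the diameter estimate of Proposition \ref{theorem-diameter estimate} with the layer description $M_{p,k} = \{x : |f(x)-f(p)| \le k\sqrt{f(p)}\}$, and then pass everything to the rescaled metric $g_p = f^{-1}(p)g$. The rough idea is: any point $x\in M_{p,k}$ lies on some level set $\Sigma_s$ with $|s-f(p)|\le k\sqrt{f(p)}$; we first travel within that level set from $x$ to a point on the flow line through $p$ (costing at most the diameter of $\Sigma_s$, which by Proposition \ref{theorem-diameter estimate} is $\le C\sqrt{s}$), and then we flow along $\nabla f$ from that point down (or up) to $p$ itself (costing at most the difference in $f$-values divided by $\inf|\nabla f|$, roughly $k\sqrt{f(p)}/\sqrt{R_{\max}}$). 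Adding the two contributions and dividing by $\sqrt{f(p)}$ to convert to the $g_p$-distance should give $d_{g_p}(x,p)\le C\sqrt{s}/\sqrt{f(p)} + 2k/\sqrt{R_{\max}}$, and since $s\le f(p)+k\sqrt{f(p)}\le 2f(p)$ for $\rho(p)$ large, the first term is bounded by a uniform constant $C_0$.

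Concretely, first I would fix $p$ with $\rho(p)\ge \bar r_0$ (to be chosen, depending on $k$) and take $x\in M_{p,k}$, so $f(x)=s$ with $|s-f(p)|\le k\sqrt{f(p)}$. Let $\beta(t)=\phi_{-t}(p)$ (resp. $\phi_t(p)$) be the integral curve of $\pm\nabla f$ through $p$; since $f$ is increasing along $\nabla f$ and $|\nabla f|^2 = R_{\max}-R \ge R_{\max}/2$ away from a compact set (using $(\ref{nonzero of gradient f-2})$-type bounds from Lemma \ref{lem-precise estimate of f}, valid once $\rho(p)$ is large), this flow line meets $\Sigma_s$ at a point $q$, and the $g$-length of the arc of $\beta$ from $p$ to $q$ is $\int |\nabla f|\,dt \le \sqrt{R_{\max}}\int |\nabla f|^{-1}|f'|\,dt \cdot (\sqrt{R_{\max}})^{-1}$; more simply, parametrizing by $f$, this arc length equals $\int |\nabla f|^{-1}\,df \le \sqrt{2/R_{\max}}\,|s-f(p)| \le \sqrt{2/R_{\max}}\,k\sqrt{f(p)}$. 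Then I would connect $x$ to $q$ inside $\Sigma_s$; by Proposition \ref{theorem-diameter estimate}, $d_{(\Sigma_s,g)}(x,q)\le C\sqrt{s}$, and of course $d_{(M,g)}(x,q)\le d_{(\Sigma_s,g)}(x,q)$. Concatenating, $d_{(M,g)}(x,p)\le C\sqrt{s} + \sqrt{2/R_{\max}}\,k\sqrt{f(p)}$. Dividing by $\sqrt{f(p)}$ and using $s\le 2f(p)$,
\[
d_{g_p}(x,p) \le C\sqrt{2} + \sqrt{2/R_{\max}}\,k,
\]
which is not quite in the advertised form; to get the sharp coefficient $2k/\sqrt{R_{\max}}$ one should instead use the refined estimate from the Remark after Proposition \ref{theorem-diameter estimate}, namely $\operatorname{diam}(\Sigma_s,g)\le C_1\sqrt{s}+C_2$ in the normalized case, and absorb the additive error into $C_0$; alternatively, flowing $q$ all the way to $\Sigma_{f(p)}$ first and only then moving within that fixed level set (whose diameter is $\le C\sqrt{f(p)}$) gives the cleaner split where the flow contribution is bounded by $2k/\sqrt{R_{\max}}$ once one also accounts for the second flow segment from that level set to $p$ — both flow segments together traverse an $f$-interval of total length $\le 2k\sqrt{f(p)}$, contributing $\le (2k\sqrt{f(p)})/\sqrt{R_{\max} - \sup_{M_{p,k}}R}$, and since $R\to 0$ this denominator tends to $\sqrt{R_{\max}}$, giving exactly $2k/\sqrt{R_{\max}}$ after rescaling, at the cost of enlarging $\bar r_0(k)$.

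The main obstacle is the bookkeeping at the boundary between "uniform constant" and "$k$-dependent threshold": the diameter bound $C\sqrt{s}$ rescales to something uniformly bounded only because $s\le f(p)+k\sqrt{f(p)}$ stays comparable to $f(p)$, and this comparison $s\le 2f(p)$ requires $\sqrt{f(p)}\ge k$, i.e. $\rho(p)$ large \emph{depending on} $k$ — which is precisely why the statement allows $\bar r_0$ to depend on $k$ while $C_0$ does not. One must also make sure the lower bound $|\nabla f|^2\ge R_{\max}/2$ (or $\ge R_{\max}-\epsilon$ for small $\epsilon$) holds throughout $M_{p,k}$, which again follows from uniform decay of $R$ together with $f\ge r_0$ on $M_{p,k}$, valid once $\bar r_0(k)$ is large enough. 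Everything else is the routine concatenation-of-paths argument; no genuinely new estimate beyond Proposition \ref{theorem-diameter estimate} and Lemma \ref{lem-precise estimate of f} is needed.
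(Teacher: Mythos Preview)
Your proposal is correct and matches the natural argument (the paper itself omits the proof and refers to Corollary~3.4 of \cite{DZ6}, which proceeds by exactly this flow-plus-diameter concatenation). One small clean-up: your ``alternative'' paragraph is slightly garbled --- there is no need for two flow segments. The tidiest version is to flow $x$ (not $q$) along $\nabla f/|\nabla f|^2$ from $\Sigma_s$ to $\Sigma_{f(p)}$, landing at some $x'$; this single arc has $g$-length $\int |\nabla f|^{-1}\,df \le |s-f(p)|/\inf_{M_{p,k}}|\nabla f| \le 2k\sqrt{f(p)}/\sqrt{R_{\max}}$ once $\bar r_0(k)$ is large enough that $R\le \tfrac{3}{4}R_{\max}$ on $M_{p,k}$. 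Then connect $x'$ to $p$ inside $\Sigma_{f(p)}$, costing $\le C\sqrt{f(p)}$ by Proposition~\ref{theorem-diameter estimate}. Dividing by $\sqrt{f(p)}$ gives $d_{g_p}(x,p)\le C + 2k/\sqrt{R_{\max}}$ directly, with $C_0=C$; your identification of why $\bar r_0$ must depend on $k$ while $C_0$ does not is exactly right.
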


The proof is the same as the one of Corollary 3.4 in \cite{DZ6}. We shall mention  that the definition of $M_{p,k}$ in Corollary \ref{set-mr-contain-2} is a little different from the one  in \cite{DZ6} and it needs to   replace  the scale $R(p)$ there  by $f^{-1}(p)$.  We skip over  the proof.

\section{Proof of Theorem \ref{dimension reduction theorem}}\label{section 4}

The proof of  Theorem \ref{dimension reduction theorem} is based on the  following famous Gromov theorem  \cite{G}.

\begin{theo}\label{Gromov's theorem}
If $V$ is $\widehat{\varepsilon}(n)$-flat, then its universal cover is diffeomorphic to $\mathbb{R}^n$, where $\widehat{\varepsilon}(n)$ is a constant depends only on $n$.
\end{theo}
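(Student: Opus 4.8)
This is Gromov's theorem on almost flat manifolds, so I only outline the strategy; for complete details see Gromov's original paper, as well as the treatments of Buser--Karcher and of Ruh. Recall that $V^n$ being $\widehat\varepsilon(n)$-flat means it carries a metric with $\sup_V|K|\cdot\mathrm{diam}(V)^2\le\widehat\varepsilon(n)$. The plan is to prove the stronger statement that $V$ is diffeomorphic to an infranilmanifold $\Gamma\backslash N$ with $N$ a simply connected nilpotent Lie group; since $\exp\colon\mathfrak n\to N$ is a diffeomorphism, $N$, and hence the universal cover $\widetilde V=N$, is diffeomorphic to $\mathbb R^n$.

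First normalize the metric so that $\mathrm{diam}(V)\le 1$ and $|K|\le\varepsilon$ with $\varepsilon>0$ as small as the later estimates require. Pass to the universal cover $\widetilde V$ with deck group $\Gamma=\pi_1(V)$ and fix $\tilde x\in\widetilde V$. Since $\mathrm{diam}(V)\le 1$, the whole group $\Gamma$ is generated by \emph{short} elements, i.e.\ those $\gamma$ with $d(\tilde x,\gamma\tilde x)\le 2$. To each short $\gamma$ one attaches an ``almost translation'' structure: comparing $\gamma$ with parallel transport along a minimal geodesic from $\tilde x$ to $\gamma\tilde x$ produces a rotational part in $\mathrm O(n)$, and $|K|\le\varepsilon$ forces $\gamma$ to be close to a genuine translation. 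The key point is a commutator contraction estimate: the commutator of two almost translations of displacement $\le L$ is again an almost translation whose displacement is controlled by $\varepsilon L^2$ plus lower-order curvature terms, hence much smaller than $L$. Iterating this along a filtration of the group generated by the short elements by displacement, and using a Margulis-type packing bound (the number of short elements is at most $c(n)$), one concludes that $\Gamma$ contains a nilpotent subgroup $\Gamma_0$ of step $\le n$ and index $\le c(n)$.

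Now pass to the finite cover $V'\to V$ corresponding to a torsion-free, finite-index nilpotent subgroup $\Gamma'\subseteq\Gamma_0$. The remaining task is to promote the ``almost nilpotent group $+$ almost flat metric'' data to an honest affine structure. Following Ruh, one deforms the Levi--Civita connection of the normalized metric to a canonical flat torsion-free affine connection; its development yields a $\Gamma'$-equivariant diffeomorphism from $\widetilde{V'}$ onto the real Malcev completion $N$ of $\Gamma'$, under which the $\Gamma'$-action becomes left translation. Hence $V'=\Gamma'\backslash N$ is a nilmanifold; the full group $\Gamma=\pi_1(V)$ then acts on $N$ through $N\rtimes\mathrm{Aut}(N)$, so $V$ is an infranilmanifold. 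In particular $\widetilde V=\widetilde{V'}=N$, which is diffeomorphic to $\mathbb R^n$ via $\exp\colon\mathfrak n\to N$.

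The main obstacle is the commutator contraction step together with control of error accumulation under iteration: one must show that the algebraic shrinking of iterated commutators strictly dominates the geometric distortion coming from curvature and from the noncommutativity of the rotational parts, which is precisely the delicate, dimension-dependent content of the Margulis lemma in this setting, and extracting an explicit admissible $\widehat\varepsilon(n)$ from it is the technical core. A softer but still nontrivial difficulty is Ruh's step, where the combinatorial nilpotency of $\Gamma'$ must be realized as a bona fide affine structure by a curvature-driven deformation rather than by a direct construction. (One could alternatively deduce the statement from the structure theory of collapse with bounded curvature of Cheeger--Fukaya--Gromov, but that machinery is heavier and partly built on the present theorem.)
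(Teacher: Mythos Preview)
Your sketch is a faithful outline of the standard proof of Gromov's almost-flat-manifold theorem (via the commutator contraction/Margulis lemma argument to get a nilpotent fundamental group of bounded index, followed by Ruh's deformation to an affine structure realizing $V$ as an infranilmanifold). As a summary of the literature proof it is essentially correct.

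However, the paper does not prove this statement at all: it is quoted as a black box from Gromov's original paper \cite{G} and used as input to the proof of Proposition~\ref{main theorem} and Theorem~\ref{theo-small decay constant}. So there is nothing to compare your argument against in the paper itself; your write-up goes well beyond what the authors intend here, which is simply to invoke the result. If the assignment is to reproduce the paper's treatment of this statement, the appropriate response is just a citation, not a proof.
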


  The definition of  almost flat manifolds is as follows.

\begin{defi}
Let $(V,g)$ be a connected $n$-dimensional complete Riemannian manifold. Let $d(V)$ be the diameter of $(V,g)$. Let $c^{+}(V)$ and $c^{-}(V)$ be the upper and lower bound of  the sectional curvature of $(V,g)$, respectively. We set $c(V)=\max\{|c^{+}(V)|,|c^{-1}(V)|\}$. We say $V$ is $\varepsilon$-flat for $\varepsilon\ge 0$ if $c(V)d^2(V)\le \varepsilon$.
\end{defi}

With the help of  Theorem \ref{Gromov's theorem},  we will prove

\begin{prop}\label{main theorem}
Let $(M,g,f)$ be an $n(\ge3)$-dimensional steady Ricci soliton with nonnegative sectional curvature and positive Ricci curvature.  Suppose   that   (\ref{scalar curvature linear decay}) holds.
Then  there is a constant $c_0>0$ such that
\begin{align}\label{lower-decay}
R(x)\ge\frac{c_0}{\rho(x)},~\forall ~\rho(x)>>1.
\end{align}
\end{prop}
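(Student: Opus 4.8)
The plan is to argue by contradiction using the diameter estimate of Proposition \ref{theorem-diameter estimate} together with Gromov's theorem on almost flat manifolds (Theorem \ref{Gromov's theorem}). Suppose no such $c_0>0$ exists. Then there is a sequence $p_i\in M$ with $\rho_N(p_i)\to\infty$ and $R(p_i)\rho(p_i)\to 0$. Write $r_i=f(p_i)$; by Lemma \ref{lem-precise estimate of f} we have $r_i\sim\sqrt{R_{\max}}\,\rho(p_i)$, so equivalently $R(p_i)r_i\to 0$ and $r_i\to\infty$. The idea is to look at the level sets $\Sigma_{r_i}$ through $p_i$, rescaled by $r_i^{-1}$, and show that after rescaling they become almost flat, which contradicts the fact that each $\Sigma_r$ is diffeomorphic to a sphere (by \cite[Lemma 2.1]{DZ6}), once $n\ge 3$.

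First I would establish the curvature side of the estimate. On $\Sigma_{r}$ with the induced metric $g$, equation (\ref{diference-ricci}) and the Gauss equation (\ref{gauss-equa}) give control of $\overline{\rm Rm}$ in terms of ${\rm Rm}$ of $M$ and quadratic terms in ${\rm Ric}$. Using Proposition \ref{lem-pointwise curvature estimate} (with $k=0$), $|{\rm Rm}(x)|\le C/f(x)$ and $|{\rm Ric}|^2\le C/f^2(x)$ along $\Sigma_r$, so $|\overline{\rm Rm}(x)|\le C/r$ for $x\in\Sigma_r$, $r\ge r_0$. Hence for the rescaled metric $h_i := r_i^{-1}g|_{\Sigma_{r_i}}$ we get a uniform two–sided sectional curvature bound $c(\Sigma_{r_i},h_i)\le C$ independent of $i$. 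The key point, which is where the hypothesis $R(p_i)\rho(p_i)\to 0$ enters, is to upgrade this to $c(\Sigma_{r_i},h_i)\to 0$: since $|\nabla f|^2 = R_{\max}-R$ and $R(p_i)\to 0$ (as $R(p_i)r_i\to0$ and $r_i\to\infty$), combined with the fact that $R$ decays uniformly so $R$ is small on all of $\Sigma_{r_i}$ once $i$ is large, one expects $R_{\max}=0$ to be forced unless the soliton is the cigar-times-flat factor — but here $(N,g_N)$ has positive Ricci curvature, so $R_{\max}>0$, and then $R(p_i)\to 0$ already shows the curvature near $p_i$ is negligible on the scale $r_i$; quantitatively, $|\overline{\rm Rm}|\le C R_i/r_i + (\text{lower order})$ where $R_i:=\sup_{\Sigma_{r_i}}R\to 0$, hence $c(\Sigma_{r_i},h_i)=r_i\cdot c(\Sigma_{r_i},g)\to 0$.

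Combining with the diameter bound ${\rm diam}(\Sigma_{r_i},g)\le C\sqrt{r_i}$ from Proposition \ref{theorem-diameter estimate}, we get ${\rm diam}(\Sigma_{r_i},h_i)\le C$, hence
\[
c(\Sigma_{r_i},h_i)\cdot{\rm diam}^2(\Sigma_{r_i},h_i)\longrightarrow 0.
\]
So for $i$ large $(\Sigma_{r_i},h_i)$ is $\widehat\varepsilon(n-1)$-flat, and Theorem \ref{Gromov's theorem} forces its universal cover to be diffeomorphic to $\mathbb{R}^{n-1}$. But $\Sigma_{r_i}$ is diffeomorphic to the sphere $S^{n-1}$ by \cite[Lemma 2.1]{DZ6} (its fundamental group is finite, so the universal cover is a compact $(n-1)$-manifold, hence not diffeomorphic to $\mathbb{R}^{n-1}$ since $n-1\ge 2$). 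This contradiction proves the existence of $c_0>0$ with $R(x)\ge c_0/\rho(x)$ for $\rho(x)\gg1$.

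The main obstacle is making the step $c(\Sigma_{r_i},h_i)\to 0$ fully rigorous: one has to be careful that the smallness of $R$ at the single point $p_i$ propagates to a uniform smallness of the \emph{intrinsic} curvature of the whole rescaled level set $\Sigma_{r_i}$, not just near $p_i$. This uses that $R$ decays uniformly (so $\sup_{\Sigma_r}R\to0$) and the derivative estimates of Proposition \ref{lem-pointwise curvature estimate} to bound the auxiliary terms ${\rm Hess}\,R$, $\Delta{\rm Ric}$, etc., appearing in the formula of Lemma \ref{lem-formula of sectional curvature} and the Gauss equation; one should also confirm $\Sigma_{r_i}$ is connected so that Gromov's theorem applies. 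Everything else is a direct assembly of the preceding lemmas.
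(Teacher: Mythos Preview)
Your overall architecture matches the paper's: contradict, rescale the level sets by $r_i^{-1}$, use the diameter bound and Gromov's theorem against the fact that $\Sigma_{r_i}\cong S^{n-1}$. The final paragraph and the contradiction are fine.

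The genuine gap is exactly the one you flag but do not close: the passage from $R(p_i)r_i\to 0$ at a \emph{single} point to $\sup_{\Sigma_{r_i}}R\cdot r_i\to 0$ on the whole level set. Your proposed fix does not work. Uniform decay of $R$ only gives $\sup_{\Sigma_{r_i}}R\to 0$; it does \emph{not} give $\sup_{\Sigma_{r_i}}R\cdot r_i\to 0$, since the linear upper bound (\ref{scalar curvature linear decay}) only yields $R\cdot r_i\le C$. The inequality you write, $|\overline{\rm Rm}|\le C R_i/r_i+\ldots$, is not correct: from nonnegative sectional curvature one has $|{\rm Rm}|\le C(n)R$, so the Gauss equation gives $|\overline{\rm Rm}|\le C R_i + C R_i^2$, hence $r_i|\overline{\rm Rm}|\le C r_i R_i$, which is merely bounded. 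The derivative estimates of Proposition~\ref{lem-pointwise curvature estimate} do not help either: $|\nabla R|\le C f^{-3/2}$ together with $\mathrm{diam}(\Sigma_{r_i},g)\le C\sqrt{r_i}$ only gives $|R(x)-R(p_i)|\le C/r_i$, i.e.\ $R(x)r_i\le R(p_i)r_i + C$, again just bounded.

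The paper fills this gap with two lemmas that you are missing. First (Lemma~\ref{lem-decay-1}), one pulls back the rescaled Ricci flow $g_i(t)=f(p_i)^{-1}g(f(p_i)t)$ via the exponential map to a ball $\widetilde B(0,\varepsilon)\subset\RR^n$; by the curvature bounds and Shi estimates the flows converge, the limit has nonnegative scalar curvature with an interior zero at $t=0$, and the \emph{strong maximum principle for the scalar curvature evolution} forces the limit to be flat on the whole ball. This yields a fixed $\varepsilon=\varepsilon(n,C)>0$ such that $R(x_i)f(x_i)\to 0$ for all $x_i\in B(p_i,\varepsilon;f(p_i)^{-1}g)$. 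Second (Lemma~\ref{lem-decay-2}), using the uniform diameter bound one covers $\Sigma_{f(p_i)}$ by a chain of finitely many such $\varepsilon$-balls and iterates the first lemma along the chain, propagating $Rf\to 0$ to every point of the level set. Only then does the Gauss equation give $r_i|\overline{\rm Rm}|\to 0$ and Gromov's theorem apply. The parabolic maximum principle is the essential missing ingredient in your argument.
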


We first prove two lemmas below.

\begin{lem}\label{lem-decay-1}
Let $(M,g,f)$ be an $n$-dimensional steady Ricci soliton  as in Proposition \ref{main theorem}.
  Suppose that there exists a sequence of points $p_i$  such that  $f(p_i)\to\infty$ and   $R(p_i)f(p_i)\to0$ as $i\to\infty$.  Then there exists a constant $\varepsilon>0$ such that
\begin{align}\label{0-limit}
R(x_i)f(x_i)\to0,~as~i\to\infty,~\forall x_i\in B(p_i,\varepsilon;f^{-1}(p_i)g).
\end{align}
Moreover, $\varepsilon$ only depends on the dimension  $n$  and the constant $C$ in (\ref{scalar curvature linear decay}).
\end{lem}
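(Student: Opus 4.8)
\textbf{Proof proposal for Lemma \ref{lem-decay-1}.}

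The plan is to propagate the smallness of $R f$ from a single point $p_i$ to an entire rescaled ball around it, using the gradient Ricci soliton structure to travel backwards along the flow of $-\nabla f$ together with the linear curvature decay hypothesis. The key observation is that along the flow $\phi_t$ generated by $-\nabla f$, the function $f(\phi_t(x))$ is nonincreasing in $t$ (since $\tfrac{d}{dt}f(\phi_t(x)) = -|\nabla f|^2(\phi_t(x)) \le 0$), so any point that lies in $M_{p_i,1}$ can be pushed forward, in finite time, onto the level set $\Sigma_{r_0}$. First I would fix $x_i \in B(p_i,\varepsilon; f^{-1}(p_i)g)$ and use Lemma \ref{geodesic ball in level set} (or rather Corollary \ref{set-mr-contain-2} in the reverse containment, together with Lemma \ref{geodesic ball in level set}) to guarantee that, for $\varepsilon$ small depending only on $n$, we have $x_i \in M_{p_i,1}$, so that $|f(x_i) - f(p_i)| \le \sqrt{f(p_i)}$, hence $f(x_i)/f(p_i) \to 1$.

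Next I would estimate $R(x_i)$ in terms of $R(p_i)$. The natural tool is the evolution equation for $R$ under Ricci flow, $\partial_t R = \Delta R + 2|{\rm Ric}|^2 \ge 0$; equivalently, the soliton identity gives $\langle \nabla f, \nabla R\rangle = \Delta R + 2|{\rm Ric}|^2 \ge -2\,{\rm Ric}(\nabla f,\nabla f) \le 0$ rearranged suitably — more precisely, along the $-\nabla f$ flow one has $\tfrac{d}{dt}R(\phi_t(x)) = -\langle \nabla f,\nabla R\rangle(\phi_t(x))$, and using $\langle \nabla f, \nabla R \rangle = -2{\rm Ric}(\nabla f,\nabla f) \le 0$ (from $\nabla R = 2{\rm Ric}(\nabla f,\cdot)$, which follows from the traced second Bianchi identity for solitons) we get that $R(\phi_t(x))$ is nondecreasing in $t$, i.e.\ $R$ decreases as we flow backward by $-\nabla f$ into larger... wait — I must be careful with the sign here; the correct statement is that $R$ is monotone along the flow lines, and one propagates the bound $R(x_i)f(x_i) \lesssim R(p_i)f(p_i) + o(1)$ by comparing $x_i$ and $p_i$ via a short path inside the rescaled ball, controlled by the gradient estimate $|\nabla R| \le C f^{-2}$ coming from Proposition \ref{lem-pointwise curvature estimate} (with $k=1$, since $|\nabla {\rm Rm}|\cdot f^{3/2} \le C(1)$ gives $|\nabla R| \cdot f^{3/2} \le C'$, hence $|\nabla R| \le C' f^{-3/2}$). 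Indeed, integrating $|\nabla R|$ along a minimal geodesic in $(M, f^{-1}(p_i)g)$ from $p_i$ to $x_i$, whose $g$-length is at most $\varepsilon \sqrt{f(p_i)}$, yields
\begin{align}
|R(x_i) - R(p_i)| \le \varepsilon \sqrt{f(p_i)} \cdot \sup_{M_{p_i,1}} |\nabla R| \le \varepsilon \sqrt{f(p_i)} \cdot \frac{C'}{(f(p_i)/2)^{3/2}} = \frac{C'' \varepsilon}{f(p_i)}.\notag
\end{align}
Multiplying by $f(x_i) \le 2 f(p_i)$ gives $|R(x_i) - R(p_i)| f(x_i) \le 2C''\varepsilon$, so $R(x_i) f(x_i) \le R(p_i) f(p_i)\cdot\frac{f(x_i)}{f(p_i)} + 2C''\varepsilon \le 2 R(p_i)f(p_i) + 2C''\varepsilon$.

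This last inequality does not quite give \eqref{0-limit} as stated because of the residual $2C''\varepsilon$ term; to fix this I would not aim for a fixed $\varepsilon$ but rather exploit that the argument works for every fixed $\varepsilon' \le \varepsilon$, giving $\limsup_i R(x_i)f(x_i) \le 2C''\varepsilon'$ uniformly over $x_i \in B(p_i,\varepsilon';f^{-1}(p_i)g)$, and then observe that the conclusion \eqref{0-limit} over the fixed ball $B(p_i,\varepsilon;\cdot)$ as actually needed in Section 4 only requires that $R(x_i)f(x_i) \to 0$ along sequences — which can be arranged by a diagonal argument, or, more cleanly, by noting that one really wants the \emph{rescaled} quantity $R(x_i) f(p_i) \to 0$ and re-examining: since $|\nabla R| \le C' f^{-3/2}$ controls the oscillation of $R$ on the whole rescaled ball by $O(f(p_i)^{-1})$ regardless of how $\varepsilon$ compares to $C''$, we get $R(x_i)f(p_i) \le R(p_i)f(p_i) + C''' \to 0$... so in fact the harmless reformulation is to prove $R(x_i)f(x_i) \le R(p_i)f(p_i) + C_n|\nabla R|$-type terms and absorb. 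The main obstacle, then, is precisely this bookkeeping: getting the $\to 0$ conclusion with a \emph{fixed} $\varepsilon = \varepsilon(n,C)$ rather than an $\varepsilon$ that must shrink, which forces one to use the derivative estimate of Proposition \ref{lem-pointwise curvature estimate} (whose constant depends only on $n$ and $C$ by the Remark) to bound the oscillation of $Rf$ on the rescaled unit ball by a quantity that is $o(1)$ as $f(p_i)\to\infty$, \emph{independently} of $\varepsilon$, so that only the shape of the ball (hence $\varepsilon = \varepsilon(n,C)$ via Lemma \ref{geodesic ball in level set} and Corollary \ref{set-mr-contain-2}) enters, not its interaction with the curvature constant. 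I would therefore organize the proof as: (1) fix $\varepsilon$ from Lemma \ref{geodesic ball in level set}/Corollary \ref{set-mr-contain-2} so that $B(p_i,\varepsilon;f^{-1}(p_i)g) \subset M_{p_i,1}$ for $\rho(p_i)$ large; (2) on $M_{p_i,1}$ invoke Lemma \ref{lem-precise estimate of f} to get $f(x) \asymp f(p_i)$; (3) invoke Proposition \ref{lem-pointwise curvature estimate} with $k=1$ to bound $|\nabla R| \le C' f(p_i)^{-3/2}$ there; (4) integrate along rescaled geodesics to bound $\mathrm{osc}_{B}(Rf) \le C_n f(p_i)^{-1/2}$; (5) conclude $R(x_i)f(x_i) \le 2R(p_i)f(p_i) + C_n f(p_i)^{-1/2} \to 0$. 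The dependence of $\varepsilon$ only on $n$ and $C$ is tracked through steps (1) and (3) using the stated Remarks.
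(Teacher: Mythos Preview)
Your approach via gradient integration has a genuine gap: the oscillation bound you claim in step (4), namely ${\rm osc}_{B}(Rf)\le C_n f(p_i)^{-1/2}$, is not what the estimates actually give. From Proposition \ref{lem-pointwise curvature estimate} with $k=1$ you correctly extract $|\nabla R|\le C' f^{-3/2}$, whence $|\nabla(Rf)|\le f|\nabla R|+R|\nabla f|\le C'' f^{-1/2}$ on $M_{p_i,1}$. But a $g$-geodesic in $B(p_i,\varepsilon;f^{-1}(p_i)g)$ has $g$-length $\varepsilon\sqrt{f(p_i)}$, so integrating yields
\[
{\rm osc}_{B}(Rf)\;\le\; C'' f(p_i)^{-1/2}\cdot \varepsilon\sqrt{f(p_i)}\;=\;C''\varepsilon,
\]
a fixed constant, not $o(1)$. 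This is exactly the scale invariance of Shi's estimates: after rescaling, the rescaled scalar curvature has gradient bounded by a constant on the unit ball, and no amount of bookkeeping of $\varepsilon$'s turns that into a vanishing oscillation. Your earlier ``$2C''\varepsilon$'' computation was in fact the honest output of the method, and the subsequent attempt to absorb it is where the argument breaks.

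The paper's proof uses a genuinely different mechanism. It pulls back the rescaled flows $g_{p_i}(t)=f(p_i)^{-1}g(f(p_i)t)$ via the exponential map to a fixed Euclidean ball $\widetilde B(0,r_0)$ (this bypasses any injectivity-radius or $\kappa$-noncollapsing issue; $r_0=r_0(n,C)$ comes from the uniform curvature bound), obtains uniform $C^\infty$ bounds from Proposition \ref{lem-pointwise curvature estimate}, and extracts a limit Ricci flow $(\widetilde B(0,r_0),\widetilde g_\infty(t))$ via Hamilton compactness. Since $\widetilde R_\infty(0,0)=\lim R(p_i)f(p_i)=0$ and the limit flow inherits nonnegative sectional curvature, the scalar curvature attains its minimum $0$ at an interior space-time point; the strong maximum principle then forces $\widetilde R_\infty\equiv 0$ on the whole ball. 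Smooth convergence now gives $R(x_i)f(x_i)\to 0$ for all $x_i\in B(p_i,\varepsilon;f^{-1}(p_i)g)$ with $\varepsilon=r_0$. The essential ingredient you are missing is precisely this parabolic strong maximum principle (equivalently, the evolution inequality $\partial_t R\ge \Delta R$), which upgrades the pointwise vanishing at the center to vanishing on the whole ball --- something linear gradient bounds alone cannot do.
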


\begin{proof}
 Let $g_i(t)=f(p_i)^{-1}g(f(p_i)t)$.  We consider the recaled sequence of Ricci flows  $(M,g_i(t),p_i)$.    By Lemma \ref{geodesic ball in level set}, we see that
\begin{align}
B(p_i,1; f(p_i)^{-1}g)\subseteq M_{p_i,\sqrt{R_{\max}}}.
\end{align}
Then, for any $q_i\in B(p_i,1;f(p_i)^{-1}g)$,
\begin{align}\label{limit is 1}
\frac{f(q_i)}{f(p_i)}\to1, ~as~i\to\infty.
\end{align}
Note that   (\ref{linear of f}) holds by Lemma \ref{lem-level set structure nonnegative case}.
 By  the curvature decay (\ref{scalar curvature linear decay}),  it follows that
\begin{align}\label{limit of curvature}
R(q_i)f(p_i)\le 2C, ~as~i\to\infty.
\end{align}
Hence,  the sectional curvature of  $(M,f(p_i)^{-1}g)$ is uniformly bounded by $C(n)C$ on $B(p_i,1;f(p_i)^{-1}g)$ when $i$ large enough.

 Let  $\iota_i:   \widetilde{B}(0,r_0)\subset \mathbb R^n \to  {B}(0,r_0;  f(p_i)^{-1}g)\subset M$  be  an  exponential map such that $\iota_i(0)=p_i$.   Then   there exists a  $r_0\le 1$ (depends only on $C$ and $n$ ) such that  $\iota_i^{\ast}(f(p_i)^{-1}g)(x)$   is a  smooth metric on   $\widetilde{B}(0,r_0)$ which satisfies $\iota_i^{\ast}(f(p_i)^{-1}g)(0)=g_{e}(0)$ and $C^{-1}_0g_{e}\le \iota_i^{\ast}(f(p_i)^{-1}g)(x)\le C_0 g_{e}$ for  any $x\in \widetilde{B}(0,r_0)$.  Here $g_e$ is the Euclidean metric and $C_0$ is a constant independent of $i$. Let $\widetilde{g}_{i}(t)=\iota_i^{\ast}(g_i (t))$.  By Lemma \ref{lem-pointwise curvature estimate} and (\ref{limit is 1}),  it follows that
\begin{align}
|\widetilde{\nabla}^k {\rm \widetilde{Rm}}_{\widetilde{g}_{i}(0)}|_{\widetilde{g}_{i}(0)}(x)=\frac{|\nabla^k {\rm Rm}|(\iota_i(x))}{f(\iota_i(x))^{-\frac{k+2}{2}}}\cdot\frac{f(p_i)^{\frac{k+2}{2}}}{f(\iota_i(x))^{\frac{k+2}{2}}}\le C_1,~\forall~x\in~\widetilde{B}(0,r_0).\notag
\end{align}
Note that
\begin{align}
f(\phi_t(p))-f(p)=\int_{0}^{-t}|\nabla f|^{2}(\phi_s(p))ds\ge 0,~\forall~t\le 0,~p\in M.
\end{align}
 Then,  for any  $t\le0$ and $x\in~\widetilde{B}(0,r_0)$,  we also get
\begin{align}
|\widetilde{\nabla}^k {\rm \widetilde{Rm}}_{\widetilde{g}_{i}(t)}|_{\widetilde{g}_{i}(t)}(x)=&\frac{|\nabla^k {\rm Rm}|(x_i,f(p_i)t)}{f(\phi_{f(p_i)t}(x_i))^{-\frac{k+2}{2}}}\cdot\frac{f(p_i)^{\frac{k+2}{2}}}{f(\phi_{f(p_i)t}(x_i))^{\frac{k+2}{2}}}\notag\\
\le& C_1\cdot\frac{f(p_i)^{\frac{k+2}{2}}}{f(x_i)^{\frac{k+2}{2}}}\notag\\
\le& 2C_1,\notag
\end{align}
where $x_i=\iota_i(x)$.
Hence, by  the Hamilton compactness theorem \cite{H4},  $(\widetilde{B}(0,r_0),\widetilde{g}_{i}(t),0)$ converges   to a limit $(\widetilde{B}(0,r_0),\widetilde{g}_{\infty}(t),0)$
in $\mathcal{C}_{loc}^{\infty}$ sense.

On the other hand,
\begin{align}
\widetilde{R}_{\infty}(0,0)=\lim_{i\to\infty}R(p_i)f(p_i)=0.
\end{align}
 This means  that $\widetilde{R}_{\infty}(x,t)$ attains  its minimum $0$ at the interior point $0\in \widetilde{B}(0,r_0)$,   since $\widetilde{g}_{i}(t)$ has nonnegative sectional curvature.   By the maximum principle,  $(\widetilde{B}(0,r_0),\widetilde{g}(t))$ has constant sectional curvature $0$.  Hence, by  taking $\varepsilon=r_0$,  the convergence  of $(\widetilde{B}(0,r_0),\widetilde{g}_{i}(0),0)$
 will imply  (\ref{0-limit}).

\end{proof}

\begin{lem}\label{lem-decay-2}
Let $(M,g,f)$ be a steady Ricci soliton  and   $\{p_i\}$
  a sequence  such that $f(p_i)\to\infty$ and   $R(p_i)f(p_i)\to0$ as $i\to\infty$  as in Lemma \ref{lem-decay-1}.  Then by taking a subsequence,
\begin{align}
R(x_i)f(x_i)\to0,~as~i\to\infty,~\forall x_i\in \Sigma_{f(p_i)}.
\end{align}
\end{lem}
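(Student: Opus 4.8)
The plan is to bootstrap from Lemma~\ref{lem-decay-1} via a covering argument along the level set $\Sigma_{f(p_i)}$, combined with the diameter estimate of Proposition~\ref{theorem-diameter estimate} (in the normalized form $\mathrm{diam}(\Sigma_r,r^{-1}g)\le C$, which holds since by Corollary~\ref{set-mr-contain-2} the level set sits inside a ball of controlled radius after rescaling by $f^{-1}(p_i)$). Set $r_i=f(p_i)$ and consider the rescaled metrics $g_i=r_i^{-1}g$. Lemma~\ref{lem-decay-1} says: if $q$ lies in the $g_i$-ball of radius $\varepsilon$ about a point $y$ with $R(y)f(y)\to 0$, then also $R(q)f(q)\to 0$ (and the convergence is uniform over such $q$, by the $\mathcal C^\infty_{loc}$ convergence of the rescaled flows). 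So the set of points on $\Sigma_{r_i}$ where $R\cdot f\to 0$ is, in the rescaled metric, both open-ish (it ``spreads'' by definite radius $\varepsilon$) and contains $p_i$; since $\Sigma_{r_i}$ in the $g_i$-metric has uniformly bounded diameter, a fixed number $N=N(\varepsilon,C,n)$ of $\varepsilon$-steps suffices to reach any point of $\Sigma_{r_i}$ from $p_i$.

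The key steps, in order. \emph{Step 1:} By Proposition~\ref{theorem-diameter estimate} and Corollary~\ref{set-mr-contain-2}, $(\Sigma_{r_i}, g_i|_{\Sigma_{r_i}})$ has diameter bounded by a constant $D$ independent of $i$; moreover, by Proposition~\ref{lem-pointwise curvature estimate} the rescaled curvature and its derivatives are uniformly bounded on a fixed $g_i$-neighborhood of $\Sigma_{r_i}$. \emph{Step 2:} Fix $x_i\in\Sigma_{r_i}$. Let $\gamma_i:[0,L_i]\to M$ be a minimal geodesic in $(M,g_i)$ from $p_i$ to $x_i$ (not necessarily inside $\Sigma_{r_i}$, but of $g_i$-length $L_i\le$ the $g_i$-distance between the two points, which is $\le D$ by Step~1 together with Lemma~\ref{geodesic ball in level set}; alternatively work with a minimizing path inside $\Sigma_{r_i}$ of length $\le D$). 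Pick points $p_i=y_i^0,y_i^1,\dots,y_i^N=x_i$ along $\gamma_i$ with consecutive $g_i$-distance $\le\varepsilon/2$, where $N=\lceil 2D/\varepsilon\rceil$ is independent of $i$. \emph{Step 3:} Argue by induction on $j=0,1,\dots,N$ that $R(y_i^j)f(y_i^j)\to 0$ as $i\to\infty$. The base case $j=0$ is the hypothesis. For the inductive step, note $y_i^{j+1}\in B(y_i^j,\varepsilon;g_i)$, and since $f(y_i^j)/f(p_i)\to 1$ (all these points lie in a bounded $g_i$-ball about $p_i$, hence in $M_{p_i,\sqrt{R_{\max}}}$ by Lemma~\ref{geodesic ball in level set}), the sequence $\{y_i^j\}_i$ itself satisfies the hypotheses of Lemma~\ref{lem-decay-1}; applying that lemma with base points $y_i^j$ gives $R(y_i^{j+1})f(y_i^{j+1})\to 0$. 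After $N$ steps we reach $x_i$. \emph{Step 4:} Since $N$ is uniform and $x_i$ was arbitrary in $\Sigma_{r_i}$, and the spreading in Lemma~\ref{lem-decay-1} is uniform in the base point once curvature bounds are fixed, conclude $\sup_{x\in\Sigma_{r_i}} R(x)f(x)\to 0$; in particular $R(x_i)f(x_i)\to 0$ for every choice $x_i\in\Sigma_{r_i}$.

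The main obstacle is making the ``uniform spreading'' rigorous: Lemma~\ref{lem-decay-1} as stated is phrased for a single sequence of base points, whereas here I need the conclusion to hold simultaneously for all points of $\Sigma_{r_i}$, i.e.\ I must pass from pointwise statements along a fixed chain to a uniform statement. The cleanest fix is to observe that the proof of Lemma~\ref{lem-decay-1} only uses the uniform curvature-derivative bounds of Proposition~\ref{lem-pointwise curvature estimate} on $g_i$-balls of radius $1$ (which hold at \emph{every} point of $\Sigma_{r_i}$ with constants independent of $i$ and of the center), so the Hamilton compactness argument can be run with an arbitrary sequence of centers $y_i\in\Sigma_{r_i}$; the limit flow has nonnegative sectional curvature, and $\widetilde R_\infty$ vanishes at the center whenever $R(y_i)f(y_i)\to 0$, forcing flatness on the limit ball and hence $R(q_i)f(q_i)\to 0$ uniformly for $q_i\in B(y_i,\varepsilon;g_i)$. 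Granting this, the finite induction in Step~3 closes and the lemma follows. A secondary, routine point is checking that all intermediate points $y_i^j$ indeed have $f(y_i^j)\to\infty$ and lie in the region $\rho\gg 1$ where (\ref{linear of f}) and Proposition~\ref{lem-pointwise curvature estimate} apply; this is immediate from $f(y_i^j)/f(p_i)\to 1$ and $f(p_i)\to\infty$.
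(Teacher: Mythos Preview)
Your proposal is correct and shares the paper's core strategy: use the uniform diameter bound on $(\Sigma_{f(p_i)},f(p_i)^{-1}\bar g)$ to build a chain of boundedly many points from $p_i$ to any target point on $\Sigma_{f(p_i)}$, spaced at rescaled distance $<\varepsilon$, and then apply Lemma~\ref{lem-decay-1} inductively along the chain. The paper packages this slightly differently: it argues by contradiction (assuming some $q_i\in\Sigma_{f(p_i)}$ with $R(q_i)f(q_i)\ge c'>0$), uses the curvature and diameter bounds to pass to a Gromov--Hausdorff limit length space $(\Sigma_\infty,\bar g_\infty)$, builds the chain of $N=[2L/\varepsilon]$ points (where $L=d_\infty(p_\infty,q_\infty)$) in that limit, and then pulls the chain back to $\Sigma_{f(p_i)}$ via the GH approximation before running the induction. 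Your direct construction of the chain in $(M,g_i)$ (or inside $\Sigma_{f(p_i)}$) avoids the GH machinery entirely and is a bit more elementary; the paper's contradiction framing is precisely its device for sidestepping the uniformity issue you flag as the ``main obstacle''---once a single bad sequence $q_i$ is fixed, only the sequential version of Lemma~\ref{lem-decay-1} is needed at each of the finitely many steps, so no uniform-spreading upgrade is required.
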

\begin{proof}
We prove by contradiction. Suppose the lemma is not true. Then, there exists a sequence of point $q_i\to\infty$ and a constant $c'>0$ such that
\begin{align}\label{contradition}
R(q_i)f(q_i)\ge c'.
\end{align}

We consider the sequence of manifolds $(\Sigma_{f(p_i)},f(p_i)^{-1}\bar{g})$, where $\bar{g}$ is the induced metric on $\Sigma_{f(p_i)}$ as a submanifold of $(M,g)$. By  Guass formula  (\ref{gauss-equa})   together with   the curvature estimate  in  Lemma \ref{lem-pointwise curvature estimate},  we  see that $(\Sigma_{f(p_i)},f(p_i)^{-1}\bar{g})$ has uniform  bounded curvature.  Moreover, by Proposition \ref{theorem-diameter estimate}, it has uniform  bounded
diameter estimate.   Thus    $(\Sigma_{f(p_i)},f(p_i)^{-1}\bar{g})$ converges to a length space $(\Sigma_{\infty},\bar{g}_{\infty})$  in  Gromov-Hausdroff topology.  Without loss of  generality,  we may assume that $p_i\to p_{\infty}$ and $q_i\to q_{\infty}$.  Then  $d_{\infty}(p_{\infty},q_{\infty})=L<\infty$.

    Let $N=[\frac{2L}{\varepsilon}]$, where $\epsilon$ is chosen as in Lemma \ref{lem-decay-1}.   Then we can choose $N$ points  $p^1_{\infty},\cdots,p^N_{\infty}$ such that
\begin{align}
d_{\infty}(p_{\infty},p^1_{\infty})&=\varepsilon/2,\notag\\
d_{\infty}(p^k_{\infty},p^{k+1}_{\infty})&=\varepsilon/2,~1\le k\le N-1,\notag\\
d_{\infty}(q_{\infty},p^N_{\infty})&\le\varepsilon/2.\notag
\end{align}
By the convergence of $(\Sigma_{f(p_i)},f(p_i)^{-1}\bar{g})$,  there exists sequences $p_i^{k}$, $1\le k\le N-1$ such that $p_i^k\to p_{\infty}^k$.
Note that  $R(p_i)f(p_i)\to0$ and $p_i^1\in B(p_i,\varepsilon;f^{-1}(p_i)g)$.   Thus by Lemma \ref{lem-decay-1}, we  see that
\begin{align}
R(p_i^1)f(p_i^1)\to 0,~as~i\to\infty.
\end{align}
Since the constant $\varepsilon$ is independent of the choice of sequence of base points ($\{p_i\}$  is replaced by $\{p_i^k\}$ here )  in Lemma \ref{lem-decay-1}, by induction on $k$, we conclude  from  Lemma \ref{lem-decay-1}  that
\begin{align}
R(p_i^k)f(p_i^k)\to 0,~as~i\to\infty, ~1\le k\le N+1.
\end{align}
In particular,   for $k={N+1}$, we get
   $$R(q_i)f(q_i)\to 0,~as~i\to\infty,$$
 which is a contradiction with (\ref{contradition}). The lemma is proved.
\end{proof}

\begin{proof}[Proof of Proposition  \ref{main theorem}]
We prove by contradiction. If the proposition  is not true,   there exists a sequence of point $p_i\to\infty$ such that $R(p_i)f(p_i)\to0$.  Then  by Lemma \ref{lem-decay-2},
\begin{align}
R(x_i)f(x_i)\to0,~as~i\to\infty,~\forall x_i\in \Sigma_{f(p_i)}.\notag
\end{align}
Since the sectional curvature is nonegative,
\begin{align}
|{\rm Rm}(x_i)|f(x_i)\to0,~as~i\to\infty,~\forall x_i\in \Sigma_{f(p_i)}.\notag
\end{align}
 By (\ref{gauss-equa}),  it follows that
\begin{align}
|\overline{\rm Rm}(x_i)|_{f(p_i)^{-1}\bar{g}}\to0, ~as~i\to\infty,~\forall x_i\in \Sigma_{f(p_i)}.\notag
\end{align}
On the other hand, by  Proposition \ref{theorem-diameter estimate},
\begin{align}
{\rm diam}(\Sigma_{f(p_i)},f(p_i)^{-1}\bar{g})\le C.\notag
\end{align}
Hence, we see that $(\Sigma_{f(p_i)},f(p_i)^{-1}\bar{g})$ is a sequence of $\widehat{\varepsilon}(n)$-flat manifold when $i$ is large enough. By Theorem \ref{Gromov's theorem}, the universal cover of $(\Sigma_{f(p_i)},f(p_i)^{-1}\bar{g})$ is diffeomorphic to $\mathbb{R}^{n-1}$. However, $\Sigma_{f(p_i)}$ is diffeomorphic to $\mathbb{S}^{n-1}$  (cf. \cite [Lemma 2.1]{DZ6}).  Therefore,   there is a contradiction.   The  proposition is proved.
\end{proof}

By Proposition \ref{main theorem}, we can finish the proof of Theorem \ref{dimension reduction theorem}.

\begin{proof}[Proof of Theorem \ref{dimension reduction theorem}]
We may suppose  that $(M,g,f)$ is non-flat. Let $(\widetilde{M},\widetilde{g})$ be the universal cover of $(M,g)$ and $\pi:\widetilde{M}\to M$ be the covering map. Let $\widetilde{f}=f\circ \pi$. Then, $(\widetilde{M},\widetilde{g},\widetilde{f})$ is also a steady Ricci soliton. By the proof of Lemma 5.1 in \cite{DZ6}, $(\widetilde{M},\widetilde{g})$ splits as $(N,h)\times\mathbb{R}^{n-k}$ and $\widetilde{f}$ is a constant when restricted on $\mathbb{R}^{n-k}$, where $k\ge 2$. Moreover, $(N,h,f_N)$ is a $k$-dimensional steady Ricci soliton with nonnegative sectional curvature and positive Ricci curvature, where $f_N(q)=\widetilde{f}(q,\cdot)$, $\forall~q\in N$.

 By (4) of Lemma \ref{lem-level set structure nonnegative case} and (\ref{scalar curvature linear decay}), we see that
\begin{align}
R(x)f(x)\le c,~\forall~f(x)\ge r_0.\notag
\end{align}
This implies that
\begin{align}
R_N(x)f_N(x)\le c,~\forall~f_N(x)\ge r_0,\notag
\end{align}
and consequently,
\begin{align}\label{n-scalar}
R_N(x)\rho_N(x)\le c',~\forall~\rho_N(x)\ge r_0'.
\end{align}

When $k=2$,   $(N,h)$ is a two-dimentional  steady Ricci soliton, which is a Cigar soliton \cite{H1, CLN}.
When $k>2$,  by (\ref{n-scalar} ), we can apply Proposition \ref{main theorem} to  the steady Ricci soliton  $(N,h)$  and conclude that  $R_N(x)$ satisfies (\ref{exact linear decay}). The proof is completed.
\end{proof}

Corollary  \ref{cor-classificaion of 3d}  follows from Theorem \ref{dimension reduction theorem}  directly.   We need to consider the case (iii) in the theorem.  However, it is proved  in \cite{DZ5} that
any 3-dimensional steady Ricci soliton   $(N^3, g_N)$ with  scalar curvature decay (\ref{exact linear decay}) must be
rotationally symmetric.

\begin{proof}[Proof of  Corollary \ref{theo-4 dimension noncollapsed case}]   By Theorem \ref{dimension reduction theorem},  the universal cover $(\widetilde{M},\widetilde{g},  \widetilde f)$ of $(M,g, f)$ is  either $(\mathbb{R}^2,g_{cigar})\times(\mathbb{R}^{2},g_{Euclid})$, or  a   4-dimensional steady Ricci soliton $(N^k,g_{N}, f_N)\times(\mathbb{R}^{n-k},g_{Euclid})$ ($k>2$) with the scalar curvature $R_{N}(\cdot)$ of $(N^k,g_{N})$ which satisfying (\ref{exact linear decay}). Since $(M,g)$ is $\kappa$-noncollapsed, $(\widetilde{M},\widetilde{g})$ is also $\kappa$-noncollapsed, and so  it  can't be $(\mathbb{R}^2,g_{cigar})\times(\mathbb{R}^{2},g_{Euclid})$.

In the latter case, $f_N(q)=\widetilde{f}(q,\cdot)$, $\forall~q\in N$. We note that $f_N$ satisfies (\ref{linear of f}) by  Lemma \ref{lem-level set structure nonnegative case} for $N$.  Thus, by (\ref{exact linear decay}),  we have
$$\frac{C^{-1}}{ f_N(x)}\le R_{N}(x)\le\frac{C}{  f_N(x)}, ~ \widetilde f_N(x)\ge r_0.$$
It follows that
$$\frac{C^{-1}}{  \widetilde f(x)}\le R_{\widetilde M}(x)\le\frac{C}{  \widetilde f(x)}, ~ \widetilde f(x)\ge r_0,$$
and consequently,
$$\frac{C^{-1}}{  f(x)}\le R(x)\le\frac{C}{  f(x)}, ~  f(x)\ge r_0.$$
Again by  (\ref{linear of f})  in  Lemma \ref{lem-level set structure nonnegative case}, we get
\begin{align}\label{scalar-curvature-linear}
\frac{(C')^{-1}}{\rho(x)}\le R(x)\le\frac{C'}{  \rho(x)}. ~ \rho(x)\ge r_0'.
\end{align}
This means that  $(M,g)$  is a 4-dimensional    $\kappa$-noncollapsed steady Ricci soliton with nonnegative sectional curvature ,   which satisfies (\ref{scalar-curvature-linear} ).   Hence,  by Theorem 1.5 in \cite{DZ6},   it  must be rotationally symmetric.

\end{proof}

\section{Proof of Theorem \ref{theo-small decay constant}}\label{section 5}

In this section, we generalize the argument in the proof of  Theorem \ref{dimension reduction theorem}
to prove Theorem \ref{theo-small decay constant}. We have assumed that $R_{\max}=1$. Then, by Lemma  \ref{lem-precise estimate of f}  and (\ref{decay with constant e(n)})
\begin{align}
R(x)f(x)\le 2\varepsilon(n),~\forall~f(x)\ge r_0,
\end{align}
when $r_0$ is large enough.

\begin{proof}[Proof of Theorem \ref{theo-small decay constant}]  Let $(\widetilde{M},\widetilde{g},\widetilde{f})$ be the covering steady Ricci soliton of $(M,g,f)$.    Then, by Theorem \ref{dimension reduction theorem},
 $(\widetilde{M},\widetilde{g})$ is a $(\mathbb{R}^2,g_{cigar})\times(\mathbb{R}^{n-2},g_{Euclid})$, or
 $(\widetilde{M},\widetilde{g})=(N, g_{N})\times(\mathbb{R}^{n-k},g_{Euclid})$, $k>2$,   where  $(N,g_{N})$ is a $k$-dimensional steady Ricci soliton with nonnegative sectional curvature  and positive Ricci curvature. Moreover,  there is a positive constant $c_0$ such that the scalar curvature $R_{N}$ of $(N, g_{N})$ satisfies
\begin{align}\label{decay on N}
\frac{c_0}{f_N(x)}\le R_N(x)\le\frac{2\varepsilon(n)}{f_N(x)},~\forall~f_N(x)\ge r_0,
\end{align}
where  $f_N(q)=\widetilde{f}(q,\cdot)$, $\forall~q\in N$. Hence, to prove  Theorem \ref{theo-small decay constant}, it suffices to eliminate the second  case  $(N^k,g_{N})\times(\mathbb{R}^{n-k},g_{Euclid})$ by contradiction.

Let $\overline{\Sigma}_r=\{x\in N|f_N(x)=r\}$ be the level set of $f_N$. By Proposition \ref{theorem-diameter estimate}, the diameter $D_r$ of $(\overline{\Sigma}_r,r^{-1}h)$ satisfies
\begin{align}
D_r\le C_1(n),~\forall~r\ge r_1, \notag
\end{align}
where $r_1$ is a large number and $C_1(n)$ is a uniform constant depending only on the dimension $n$.
Thus,  as in the proofs of  Lemma \ref{geodesic ball in level set} and Corollary \ref{set-mr-contain-2},  we see that
\begin{align}
\overline{\Sigma}_r\subseteq B(x,2C_1(n);r^{-1}h)\subseteq M_x,2C_1(n),~\forall~x\in \overline{\Sigma}_r.\notag
\end{align}

On the other hand, by  (\ref{decay on N}) and the nonnegativity of sectional curvature,
\begin{align}
|{\rm Rm}_{h}|_h(x)\cdot r\le C_2(n)\varepsilon(n),~\forall~x\in \overline{\Sigma}_r.\notag
\end{align}
Then by Lemma \ref{lem-pointwise curvature estimate} and Lemma \ref{lem-formula of sectional curvature} together with
 (\ref{gauss-equa}), we get
\begin{align}
|\overline{\rm Rm}_{r^{-1}h}|_{r^{-1}h}=r\cdot|\overline{\rm Rm}_h|_h\le  2r\cdot|{\rm Rm}_h|_h(x)\le 2C_2(n)\varepsilon(n),~\forall~x\in \overline{\Sigma}_r.\notag
\end{align}
It follows that
\begin{align}
|\overline{\rm Rm}_{r^{-1}h}|_{r^{-1}h}\cdot D^2_r\le 2C_1(n)C^2_2(n)\varepsilon(n),~\forall~x\in \overline{\Sigma}_r,~r\ge r_1.\notag
\end{align}
When $\varepsilon(n)\le  \frac{1}{2}C^{-1}_1(n)C^{-2}_2(n)\widehat{\varepsilon}(n)$, $(\overline{\Sigma}_r,r^{-1}h)$ is an $\widehat{\varepsilon}(n)$-flat metric.  Hence,   by Gromov's theorem, Theorem \ref{Gromov's theorem},  the universal cover of $\overline{\Sigma}_r$ must be diffeomorphic to $\mathbb{R}^{k-1}$.  However,   $\overline{\Sigma}_r$ is diffeomorphic to $\mathbb{S}^{k-1}$ (cf. \cite [Lemma 2.1]{DZ6}) .   Therefore,  we get a contradiction.   As a consequence,  $(\widetilde{M},\widetilde{g})$ must be  $(\mathbb{R}^2,g_{cigar})\times(\mathbb{R}^{n-2},g_{Euclid})$.

\end{proof}

\section{ A generalization of Proposition  \ref{main theorem} }\label{section 6}

In this section,   we prove an analogy of  Proposition  \ref{main theorem} for $\kappa$-noncollapsed steady Ricci solitons  which satisfying(\ref{uniform curvature decay}).  Namely, we show

\begin{theo}\label{theorem-uniform decay and noncollapsed case}
Let $(M^n,g,f)$ be a $\kappa$-noncollapsed steady Ricci soliton which  satisfies (\ref{uniform curvature decay}).  Suppose that $f$ satisfies (\ref{linear of f}). Then $\Sigma_r$ is diffeomorphic to a compact shrinking Ricci soliton with nonnegative Ricci curvature for any $r \ge r_1>0$. Moreover, there is $c_0>0$ such that
\begin{align}\label{lowe estimate of R}
R(x)\ge\frac{c_0}{f(x)},~\forall~\rho(x)>>1.
\end{align}
\end{theo}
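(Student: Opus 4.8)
The plan is to mirror the strategy of Proposition \ref{main theorem}, but without the nonnegative sectional curvature assumption, replacing it by the $\kappa$-noncollapsed hypothesis and the full curvature decay (\ref{uniform curvature decay}). First I would set up the rescaled Ricci flows $g_i(t)=f(p_i)^{-1}g(f(p_i)t)$ based at a sequence $p_i$ with $f(p_i)\to\infty$ and $R(p_i)f(p_i)\to 0$ (arguing by contradiction against (\ref{lowe estimate of R})). As in the proof of Lemma \ref{lem-decay-1}, Lemma \ref{geodesic ball in level set} together with (\ref{linear of f}) and (\ref{uniform curvature decay}) gives uniformly bounded curvature on balls of fixed radius, and Proposition \ref{lem-pointwise curvature estimate} supplies all higher derivative bounds; the $\kappa$-noncollapsing passes to $g_i$ and so, by the Hamilton--Cheeger--Gromov compactness theorem, a subsequence converges in the pointed $\mathcal C^\infty_{loc}$ sense to an ancient Ricci flow $(M_\infty,g_\infty(t),p_\infty)$ with nonnegative Ricci curvature (since $\mathrm{Ric}\ge 0$ is preserved under the rescaling for $f\ge r_0$) and with $R_{g_\infty}(p_\infty,0)=\lim R(p_i)f(p_i)=0$.

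Next I would run the strong maximum principle: the evolution equation $\partial_t R=\Delta R+2|\mathrm{Ric}|^2$ together with $\mathrm{Ric}\ge 0$ and the interior minimum $R_{g_\infty}(p_\infty,0)=0$ forces $R_{g_\infty}\equiv 0$, hence $\mathrm{Ric}_{g_\infty}\equiv 0$, on a backward parabolic neighborhood; so the limit is Ricci-flat there, and in particular $\widetilde{R}_\infty$ and therefore $R(x_i)f(x_i)$ tend to zero uniformly for $x_i$ in a ball $B(p_i,\varepsilon;f(p_i)^{-1}g)$ with $\varepsilon=\varepsilon(n,C,\kappa)$. This is the analogue of Lemma \ref{lem-decay-1}. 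Then I would repeat the chaining argument of Lemma \ref{lem-decay-2}: using the diameter estimate from Proposition \ref{theorem-diameter estimate}, the Gauss equation (\ref{gauss-equa}) and Proposition \ref{lem-pointwise curvature estimate}, the rescaled level sets $(\Sigma_{f(p_i)},f(p_i)^{-1}\bar g)$ have uniformly bounded geometry and diameter, hence subconverge in Gromov--Hausdorff (in fact $\mathcal C^\infty$) to a compact limit; covering that limit by finitely many $\varepsilon/2$-balls and propagating $R(x_i)f(x_i)\to 0$ from ball to ball shows $R(x_i)f(x_i)\to 0$ for \emph{all} $x_i\in\Sigma_{f(p_i)}$.

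Finally I would extract the contradiction. On $(\Sigma_{f(p_i)},f(p_i)^{-1}\bar g)$ the intrinsic curvature is controlled via the Gauss equation (\ref{gauss-equa}) by $|\mathrm{Rm}|\cdot f$ plus terms of order $|\mathrm{Ric}|^2/|\nabla f|^2 = O(f^{-2})$; since we have shown $|\mathrm{Rm}(x_i)|f(x_i)\to 0$ uniformly on $\Sigma_{f(p_i)}$ (here one needs $|\mathrm{Rm}|f\to 0$, which follows from $Rf\to 0$ only when the full curvature is comparable to the scalar curvature — under (\ref{uniform curvature decay}) alone one instead argues that the rescaled ancient limit through a point where $Rf\to 0$ is flat, using nonnegative Ricci plus dimension reduction), the rescaled level sets become $\widehat\varepsilon(n)$-flat for large $i$. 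By Gromov's theorem, Theorem \ref{Gromov's theorem}, their universal covers are diffeomorphic to $\mathbb R^{n-1}$, contradicting the fact that each $\Sigma_r$ is diffeomorphic to $\mathbb S^{n-1}$ by \cite[Lemma 2.1]{DZ6}. The first assertion, that $\Sigma_r$ is diffeomorphic to a compact shrinking Ricci soliton with $\mathrm{Ric}\ge 0$, comes out of the same compactness analysis applied \emph{along the soliton flow at a single point}: the rescaled flow based at a point of $\Sigma_r$ converges to a $\kappa$-noncollapsed ancient limit with $\mathrm{Ric}\ge 0$ whose cross-section is the limit of the level sets. The main obstacle I anticipate is precisely the gap between $Rf\to 0$ and $|\mathrm{Rm}|f\to 0$: without nonnegative sectional curvature one cannot pass directly from vanishing scalar curvature in the limit to vanishing full curvature, so the delicate point is to show the rescaled ancient limit through a degenerating point is genuinely flat — this requires exploiting the ancient, $\kappa$-noncollapsed, $\mathrm{Ric}\ge 0$ structure (e.g. a splitting/dimension-reduction argument on the limit) rather than a one-line maximum principle.
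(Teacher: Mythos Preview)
Your approach has a genuine gap, and in fact the very obstacle you flag at the end is fatal. Under the hypotheses of Theorem \ref{theorem-uniform decay and noncollapsed case} there is \emph{no} nonnegative sectional curvature assumption, so two of your key steps break down. First, the appeal to \cite[Lemma 2.1]{DZ6} for $\Sigma_r\cong\mathbb S^{n-1}$ is unjustified: that lemma uses the soul theorem and hence nonnegative sectional curvature. Indeed, the conclusion of the theorem is only that $\Sigma_r$ is diffeomorphic to a compact shrinking Ricci soliton, not to a sphere; the Gromov almost-flat argument therefore has no target to contradict. Second, even granting the chaining argument, you only obtain $R(x_i)f(x_i)\to 0$ on $\Sigma_{f(p_i)}$; without nonnegative sectional curvature this does not imply $|{\rm Rm}(x_i)|f(x_i)\to 0$, so you cannot conclude that $(\Sigma_{f(p_i)},f(p_i)^{-1}\bar g)$ is $\widehat\varepsilon(n)$-flat.

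The paper's route is different and avoids both problems. One first shows (Lemma \ref{theo-convergence of genneral case}) that for any $p_i\to\infty$ the rescaled flows $(M,f(p_i)^{-1}g(f(p_i)t),p_i)$ converge to a product $(\mathbb R\times\Sigma,ds^2+g_\Sigma(t))$, where the splitting comes from the limit of $f(p_i)^{1/2}\nabla f$ and $\Sigma$ is compact by the diameter estimate; the limit inherits $\mathrm{Ric}\ge 0$, the monotonicity $\partial_t R_\Sigma\ge 0$, and the type-I decay $|{\rm Rm}_\Sigma|(x,t)\le C/(1+|t|)$. Then Naber's theorem \cite{Na} applied to the compact ancient flow $(\Sigma,g_\Sigma(t))$ produces a blow-down limit $(\Sigma',g'(-1))$ which is a shrinking Ricci soliton, and Ni's diameter estimate \cite{Ni} forces $\Sigma'\cong\Sigma$; this gives the first assertion. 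For (\ref{lowe estimate of R}), if $R(p_i)f(p_i)\to 0$ then $R_\Sigma(p_\infty,0)=0$; combining $R_\Sigma\ge 0$ (Chen) with $\partial_t R_\Sigma\ge 0$ yields $R_\Sigma(p_\infty,t)=0$ for all $t\le 0$, and the strong maximum principle then gives $R_\Sigma\equiv 0$. Passing to the blow-down produces a \emph{compact} shrinking Ricci soliton with $R\equiv 0$, which is impossible by the elementary Lemma \ref{compact shrinking soliton not flat}. Thus the contradiction is obtained on the level of the shrinking-soliton structure of the cross-section, not via Gromov's theorem or the topology of $\Sigma_r$.
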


Because of lack of positivity of sectional curvature, we could not use Gromov's Theorem  to study the structure
of level sets on  steady Ricci soliton as in Section 4. Here we  first use the $\kappa$-noncollapsed condition to derive the convergence of  rescaled flows $(M,f^{-1}(p_i)g(f(p_i)t),p_{i})$ with the help of diameter estimate established in Section 3.

\begin{lem}\label{theo-convergence of genneral case}
Let $(M,g,f)$ be a  $\kappa$-noncollapsed steady Ricci soliton  as in  Theorem \ref{theorem-uniform decay and noncollapsed case}.  Then, for any $p_{i}\rightarrow \infty$,
 rescaled flows $(M,f^{-1}(p_i)g(f(p_i)t),p_{i})$ converges subsequently to
$(\mathbb{R}\times \Sigma,$ $ds^2+g_{\Sigma}(t))$ ( $t\in (-\infty,0]$) in the Cheeger-Gromov topology, where   $\Sigma$ is diffeormorphic to  the level set $\Sigma_{r_0}$ of $f$ and
 $(\Sigma,$ $g_{\Sigma}(t))$ is a $\kappa$-noncollapsed ancient Ricci flow with nonnegative Ricci curvature. Moreover, the scalar curvature $R_{\Sigma}(t)$ satisfies
 \begin{align}\label{monotonicity of R}
 \frac{\partial}{\partial t}R_{\Sigma}(x,t)\ge0, ~\forall~x\in \Sigma,~t\le 0,
 \end{align}
 and the curvature tensor ${\rm Rm}_{\Sigma}(x,t)$ of $(\Sigma,g_{\Sigma}(t))$ satisfies
 \begin{align}\label{decay of t}
 |{\rm Rm}_{\Sigma}|(x,t)\le\frac{C}{1+|t|}, ~\forall~x\in \Sigma,~t\le0,
 \end{align}
 where $C$ is a uniform constant.
\end{lem}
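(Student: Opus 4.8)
The plan is to run a Cheeger--Gromov compactness argument on the rescaled Ricci flows $g_i(t) = f(p_i)^{-1} g(f(p_i) t)$ based at $p_i$, exactly in the spirit of Perelman's dimension reduction for ancient solutions, but using the diameter estimate of Section 3 to control the geometry transverse to the level sets. First I would observe that, since $g(t) = \phi_t^\ast g$ solves \eqref{Ricci flow equation} and the soliton has nonnegative Ricci curvature along the level sets, together with \eqref{uniform curvature decay} and \eqref{linear of f}, Proposition \ref{lem-pointwise curvature estimate} gives uniform bounds $|\nabla^k \mathrm{Rm}_{g_i(t)}| \le C(k)$ on balls $B(p_i, r; g_i(0))$ for all $t \le 0$, after noting that $f(\phi_s(x)) \ge f(x)$ for $s \le 0$ pushes points into the region where the curvature estimate holds. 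Combined with the $\kappa$-noncollapsing hypothesis (which is scale invariant, hence passes to the $g_i$), Hamilton's compactness theorem \cite{H4} yields a subsequential limit $(M_\infty, g_\infty(t), p_\infty)$ which is a complete $\kappa$-noncollapsed ancient Ricci flow on $t \in (-\infty, 0]$ with nonnegative Ricci curvature and bounded curvature.

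Next I would identify the limit as a product $\mathbb{R} \times \Sigma$. The key point is that the level set $\Sigma_{f(p_i)}$, rescaled by $f(p_i)^{-1}$, has uniformly bounded diameter by Proposition \ref{theorem-diameter estimate} (which gives $\mathrm{diam}(\Sigma_r, g) \le C\sqrt{r}$, i.e. diameter $\le C$ after the scaling $r^{-1}$), while the transverse direction — the integral curves of $\nabla f / |\nabla f|^2$ — stretches out to an infinite line since $|\nabla f|^2 = R_{\max} - R \to R_{\max} > 0$ and the scalar curvature of $g_i$ tends to zero along these curves away from $\Sigma_{f(p_i)}$. More precisely, the function $\widetilde f_i := (f - f(p_i))/\sqrt{f(p_i)}$ on $(M, g_i(0))$ has gradient of norm tending to $\sqrt{R_{\max}}$ uniformly on bounded sets (using \eqref{limit constant} from Lemma \ref{lem-precise estimate of f}), so in the limit it becomes an affine function with $|\nabla \widetilde f_\infty| \equiv \sqrt{R_{\max}}$; by the splitting theorem / de Rham decomposition this forces $(M_\infty, g_\infty(0)) = (\mathbb{R}, ds^2) \times (\Sigma, g_\Sigma(0))$, with $\Sigma$ a limit of the level sets, hence diffeomorphic to $\Sigma_{r_0}$ once $i$ is large (the level sets are all diffeomorphic by Lemma \ref{lem-level set structure nonnegative case}, and the bounded-diameter bounded-geometry convergence is in $C^\infty$). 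Since the $\mathbb{R}$-factor is static under the flow (it splits off for every $t$ by the same argument, or by the evolution of the splitting under Ricci flow), the limit flow is $ds^2 + g_\Sigma(t)$ with $(\Sigma, g_\Sigma(t))$ a $\kappa$-noncollapsed ancient flow with $\mathrm{Ric} \ge 0$.

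Then I would establish the three asserted properties of $g_\Sigma(t)$. For \eqref{decay of t}: curvature of the limit is bounded by a uniform constant $C$ on all of $(-\infty, 0]$ from the compactness step, and the refined bound $|\mathrm{Rm}_\Sigma|(x,t) \le C/(1+|t|)$ comes from reapplying Proposition \ref{lem-pointwise curvature estimate} at the base point of the time-$t$ slice — concretely, a point in the time-$t$ slice of $g_i$ corresponds to $\phi_{f(p_i) t}(p_i)$ in $M$, at which $f \ge f(p_i)(1 + c|t|)$ by the monotonicity $f(\phi_s(p)) - f(p) = \int_0^{-s} |\nabla f|^2 \ge \tfrac12 R_{\max}(-s)$ for $s \le 0$, and \eqref{uniform curvature decay} gives $|\mathrm{Rm}| \lesssim 1/f \lesssim 1/(f(p_i)(1+|t|))$, which after rescaling by $f(p_i)$ is $C/(1+|t|)$. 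For \eqref{monotonicity of R}: on the soliton, $\tfrac{\partial}{\partial t} R(\phi_t(x)) = \langle \nabla R, -\nabla f\rangle = 2\,\mathrm{Ric}(\nabla f, \nabla f) \ge 0$ using $\mathrm{Ric}(X, \cdot) = -\tfrac12 \nabla R$ and $\mathrm{Ric} \ge 0$ along the level sets; this monotonicity is scale invariant and passes to the limit, giving $\partial_t R_\Sigma \ge 0$. Finally the statement that each $\Sigma_r$ ($r \ge r_1$) is diffeomorphic to a compact shrinking Ricci soliton with $\mathrm{Ric} \ge 0$ follows because $(\Sigma, g_\Sigma(t))$ is a compact (bounded diameter!) $\kappa$-noncollapsed ancient solution with $\mathrm{Ric} \ge 0$, and such a solution is a shrinking gradient Ricci soliton — one extracts the soliton by a backwards-in-time blow-down or, more directly, because a compact ancient solution with $\mathrm{Ric}\ge 0$ that exists on $(-\infty,0]$ must be a shrinking soliton (Perelman's no-breather / compact ancient solution classification); and $\Sigma_r \cong \Sigma_{r_0} \cong \Sigma$. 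The lower bound \eqref{lowe estimate of R} is then obtained in the proof of Theorem \ref{theorem-uniform decay and noncollapsed case} itself by contradiction: if $R(p_i) f(p_i) \to 0$ along some sequence, the limit $\Sigma$ would have $R_\Sigma(\cdot, 0) \equiv 0$ at an interior point hence everywhere (strong maximum principle, $\mathrm{Ric}\ge 0$), contradicting that a compact shrinking soliton has positive scalar curvature somewhere.

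The main obstacle I expect is the splitting step: one must verify carefully that the diameter bound from Proposition \ref{theorem-diameter estimate} together with the uniform estimate $|\nabla f|_{g_i} \to \sqrt{R_{\max}}$ really produces a \emph{metric} splitting $\mathbb{R} \times \Sigma$ of the limit (and not just a topological product or an $\mathbb{R}$-bundle), which requires knowing the level function $\widetilde f_i$ converges to a genuine distance function / affine function in the limit — equivalently, that the transverse integral curves of $\nabla f/|\nabla f|^2$ are, after rescaling, minimizing up to errors going to zero. This uses the curvature decay to show the second fundamental forms of the level sets and the "twisting" of the normal geodesic congruence become negligible at the rescaled scale, and then the splitting theorem of Cheeger--Gromoll (applied to the limit, which has $\mathrm{Ric}\ge 0$ and contains a line) closes the argument.
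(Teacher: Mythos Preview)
For the lemma itself your outline matches the paper's proof: uniform curvature bounds from Proposition~\ref{lem-pointwise curvature estimate} plus $\kappa$-noncollapsing give Hamilton compactness; the splitting comes from the rescaled gradient $X_{(i)}=f(p_i)^{1/2}\nabla f=\nabla_{g_i}\tilde f_i$ becoming parallel (since $|\nabla_{g_i}X_{(i)}|_{g_i}=\sqrt{f(p_i)}\,|{\rm Ric}|\to 0$), after which one checks its integral curve through $p_\infty$ is a complete line and invokes the splitting; and (\ref{decay of t}), (\ref{monotonicity of R}) follow exactly as you describe from $f(\phi_s(x))-f(x)\ge\tfrac{R_{\max}}{2}|s|$ and $\partial_t R=2\,{\rm Ric}(\nabla f,\nabla f)\ge 0$. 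The paper handles the identification $\Sigma\cong\Sigma_{r_0}$ by proving separate $C^\infty$ convergence of the rescaled level sets $(\Sigma_{f(p_i)},f(p_i)^{-1}\bar g)$ (citing \cite{DZ6}, Section~4), which is the precise version of your ``bounded-diameter bounded-geometry'' remark.

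One caution on the material you added beyond the lemma. The claim that $\Sigma_r$ is diffeomorphic to a compact \emph{shrinking soliton}, and the lower bound (\ref{lowe estimate of R}), belong to the proof of Theorem~\ref{theorem-uniform decay and noncollapsed case}, and there your assertion that ``a compact ancient solution with ${\rm Ric}\ge 0$ on $(-\infty,0]$ must be a shrinking soliton (Perelman's no-breather / compact ancient solution classification)'' is not available in this generality and is not what the paper does. Instead one performs a \emph{further} parabolic blow-down $(\Sigma,\tau_i^{-1}g_\Sigma(\tau_i t))$, applies Naber's asymptotic-soliton theorem \cite{Na} (for which the Type~I decay (\ref{decay of t}) is precisely the required hypothesis), and uses Ni's diameter estimate \cite{Ni} to see that the blow-down limit $\Sigma'$ is compact and hence diffeomorphic to $\Sigma$. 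So (\ref{decay of t}) is not merely a by-product of the compactness step: it is the essential input to the next stage of the argument.
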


\begin{proof}  The proof  is a modification of \cite[Theorem 1.4]{DZ6}, while at present there is no assumption of nonnegativity of  sectional curvature.  For any fixed $\bar{r}>0$,  by  (\ref{set-mr-contain-1}), we have
\begin{align}
 B(p_i,\bar{r};f^{-1}(p_i)g(0))\subseteq M_{p_i,\bar{r}\sqrt{R_{\max}}}.\notag
\end{align}
Moreover, by the relation,
$$\frac{{\rm d}|\nabla f|^2(\phi_{t}(p))}{{\rm d}t}=-2{\rm Ric}(\nabla f,\nabla f)(\phi_{t}(p))\le 0,~\forall~t\le0, ~f(p)\ge r_0,$$
 one can show that for any $t\le0$,
\begin{align}\label{lower bound of ||}
|\nabla f|^2(\phi_{t}(x))\ge |\nabla f|^2(x)\ge \frac{R_{\max}}{2},~\forall ~x\in B(p_i,\bar{r};f^{-1}(p_i)g(0)).
\end{align}

For $x\in B(p_i,\bar{r};f^{-1}(p_i)g(0))$, by  (\ref{uniform curvature decay}), it is easy to see that
\begin{align}
|{\rm Rm}_{g_{p_i}(t)}(x)|_{g_{p_i}(t)}=|{\rm Rm}(x,f(p_i)t)|\cdot f(p_i)\le \frac{Cf(p_i)}{f(\phi_{f(p_i)t}(x))}.\notag
\end{align}
Moreover,  by (\ref{lower bound of ||}),
\begin{align}
f(x,f(p_i)t)-f(x)=\int_{0}^{f(p_i)|t|}|\nabla f|^2(\phi_s(x))ds\ge \frac{R_{\max}}{2}\cdot f(p_i)|t|.\notag
\end{align}
Thus
\begin{align}
|{\rm Rm}_{g_{p_i}(t)}(x)|_{g_{p_i}(t)}\le \frac{Cf(p_i)}{f(x)+\frac{R_{\max}}{2}\cdot f(p_i)|t|}\le \frac{2C}{1+R_{\max}|t|},~\forall~t\le0.\notag
\end{align}
As in the proof of Lemma \ref{lem-pointwise curvature estimate}, for any $t\le0$, we  further get
\begin{align}
|\nabla^k{\rm Rm}_{g_{p_i}(t)}(x)|_{g_{p_i}(t)}\le \frac{2C(k)}{1+R_{\max}|t|},~\forall~x\in B(p_i,\bar{r};f^{-1}(p_i)g(0)),\notag
\end{align}
where $C(k)$ is independent of $i$ and $\bar{r}$.
Note that  $(M,g_{p_i}(t))$ is $\kappa$-noncollapsed.  Hence,  $(M,g_{p_i}(t),p_i)$ converge subsequently to a Ricci flow $(M_{\infty},g_{\infty}(t),p_{\infty})$ with nonnegative Ricci curvature in   Cheeger-Gromov topology, which  satisfies the curvature decay
\begin{align}\label{t-decay}
 |{\rm Rm}_{\infty}(x,t)|\le\frac{C}{1+|t|}, ~\forall~x\in M_{\infty},~t\le 0.
 \end{align}
 On the other hand, for any  $~t\le0$ and $x\in B(p_i,\bar{r};f^{-1}(p_i)g(0))$, it holds
\begin{align}
&\frac{\partial R_{g_{p_i}(t)(x)}}{\partial t}=f(p_i)^2\frac{\partial R}{\partial t}(x,f(p_i)t)\notag\\
&=2f(p_i)^2{\rm Ric}(\nabla f,\nabla f)(x,f(p_i)t)\ge0.\notag
\end{align}
Therefore,  we obtain  (\ref{monotonicity of R}) from (\ref{t-decay}) immediately.

 Let $X_{(i)}=f(p_{i})^{\frac{1}{2}}\nabla f$.
\begin{align}
\sup_{ B(p_{i},\bar r ;  {g_{p_i}})}| \nabla_{(g_{p_i})}X_{(i)}|_{g_{p_i}}&= \sup_{ B(p_{i},\bar r ;  {g_{p_i}})}|{\rm Ric} |\cdot\sqrt{f(p_{i})}
\le C\sqrt{R(p_{i})} \to 0.\notag
\end{align}
Then,  by Lemma \ref{lem-pointwise curvature estimate},
$$\sup_{ B(p_{i},\bar r;  {g_{p_i}})}| \nabla^{k}_{(g_{p_i})}X_{(i)}|_{g_{p_i}}\leq C(n)\sup_{ B(p_{i},\bar r;  {g_{p_i}})}| \nabla^{k-1}_{(g_{p_i})}{\rm Ric}({g_{p_i})}|_{g_{p_i}}\le C_1.$$
Hence,  $X_{(i)}$ converges  subsequently  to a parallel  vector field $X_{(\infty)}$ on $( M_{\infty},$ $  g_{\infty}(0))$.
 Moreover,
 \begin{align}
 |X_{(i)}|_{g_{p_i}}( x)=|\nabla f|(p_{i})
=\sqrt{R_{\rm max}}+o(1)>0, ~\forall~ x\in B(p_{i},\bar r ;  {g_{i}}),\notag
 \end{align}
as long as $f(p_i)$ is large enough.  This implies that $X_{(\infty)}$ is non-trivial. As a consequence,
   $( M_{\infty},g_{\infty}(t))$ locally splits off a piece of  line along $X_{(\infty)}$. By the same argument in the proof of Lemma 4.6 in \cite{DZ6}, we can further  show that  $X_{(\infty)}$
 generates a line through $p_\infty$.  Hence, $(M_{\infty},g_{\infty}(0))$ splits off a line.

 Now, we may assume that $(M_{\infty},g_{\infty}(t))=(\mathbb{R}\times \Sigma,$ $ds^2+g_{\Sigma}(t))$ for $t\le0$. Consequently,  the curvature ${\rm Rm}_{\Sigma}(x,t)$ of $(\Sigma,g_{\Sigma}(t))$ satisfies (\ref{decay of t}). We are left to show that $\Sigma$ is diffeomorphic to $\Sigma_{r_0}$ when $r\ge r_0$.
 It suffices to prove  the convergence of  $(\Sigma_{f(p_i)},R(p_i)\bar{g})$  as in \cite{DZ6}.   In fact,  all  Lemma 4.2-4.4, Proposition 4.5 and Lemma 4.7 in \cite[Section 4]{DZ6}  are valid,   while the corresponding estimates  have been obtained    here in Lemma \ref{geodesic ball in level set}, Lemma \ref{lem-pointwise curvature estimate}, Proposition \ref{theorem-diameter estimate} and Corollary \ref{set-mr-contain-2}.  Thus  $(\Sigma_{f(p_i)},f(p_i)^{-1}\bar{g})$ converge subsequently  to a compact manifold  $(\Sigma_{\infty},\bar{g}_{\infty})$ in Cheeger-Gromov sense. Moreover,  $(\Sigma_{\infty},\bar{g}_{\infty})\subseteq \Sigma\times\{p_{\infty}\}$. By the connectness of $\Sigma$,   $\Sigma_{\infty}$ is diffeomorphic to  $\Sigma$, and so is each $\Sigma_{r}$.    The lemma is  proved.

\end{proof}

The following result seems well-known.

\begin{lem}\label{compact shrinking soliton not flat}
 The scalar curvature of a compact gradient shrinking Ricci soliton does not vanish.
\end{lem}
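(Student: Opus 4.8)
The plan is to argue by contradiction: suppose $(V^n,g,f)$ is a compact gradient shrinking Ricci soliton with $R\equiv 0$. Recall the defining equation ${\rm Ric}+{\rm Hess}\,f=\tfrac12 g$ (up to normalization of the shrinking constant), so tracing gives $R+\Delta f=\tfrac n2$. If $R\equiv 0$, then $\Delta f\equiv \tfrac n2>0$ on the closed manifold $V$, which immediately contradicts the divergence theorem $\int_V \Delta f\,dV_g=0$. This is the entire argument; there is essentially no obstacle.

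To make the write-up robust against normalization conventions, I would phrase it so that the shrinking constant $\lambda>0$ appears explicitly: from ${\rm Ric}+{\rm Hess}\,f=\lambda g$ one gets $R+\Delta f=n\lambda$, hence $R\equiv 0$ forces $\Delta f=n\lambda>0$ everywhere, and integrating over the closed manifold yields $0=\int_V \Delta f\,dV_g=n\lambda\,{\rm Vol}(V)>0$, a contradiction. One should note that on a compact shrinking soliton $\lambda$ is indeed positive (this is part of the definition of \emph{shrinking}); alternatively, if one prefers not to invoke the sign of $\lambda$ a priori, one can recall Hamilton's identity $R+|\nabla f|^2-2\lambda f=\mathrm{const}$ together with the fact that $R>0$ somewhere on any non-flat compact shrinker, but the flat case $V=\mathbb{R}^n/\Gamma$ is not compact unless one allows flat tori, which are Einstein with $\lambda=0$, not genuine shrinkers; hence the normalization $\lambda>0$ is the clean hypothesis to use.

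The only point worth a sentence of care is the standing convention in the paper: here the \emph{steady} soliton equation is written ${\rm Ric}={\rm Hess}\,f$, so for the shrinking case the natural companion convention is ${\rm Ric}+{\rm Hess}\,f=\lambda g$ with $\lambda>0$; I would state this convention explicitly at the start of the proof so the trace computation is unambiguous. Since the statement is invoked only qualitatively (to rule out a flat limit level set), no quantitative dependence is needed, so the one-line integration argument suffices.

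\begin{proof}
By the shrinking soliton equation, $ {\rm Ric}+{\rm Hess}\,f=\lambda g$ for some constant $\lambda>0$. Taking the trace with respect to $g$ gives
\[
R+\Delta f=n\lambda .
\]
If the scalar curvature $R$ vanishes identically on $V$, then $\Delta f\equiv n\lambda$ on the closed manifold $V$. Integrating over $V$ and using the divergence theorem,
\[
0=\int_V \Delta f\,dV_g=n\lambda\,{\rm Vol}(V,g)>0,
\]
a contradiction. Hence $R$ cannot vanish identically on a compact gradient shrinking Ricci soliton.
\end{proof}
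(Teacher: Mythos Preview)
Your proof is correct and follows essentially the same approach as the paper: both trace the shrinking soliton equation to obtain $\Delta f=\text{positive constant}$ on a closed manifold and derive a contradiction. The only cosmetic difference is that the paper extracts the contradiction via the maximum principle (at a maximum of $f$ one has $\Delta f\le 0$), whereas you integrate and invoke the divergence theorem; these are interchangeable one-line conclusions from the same computation.
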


\begin{proof}
Let $(\Sigma,g,f)$ be a compact gradient shrinking Ricci soliton, i.e., it satisfies
\begin{align}
{\rm Ric}-\frac{g}{2}+{\rm Hess}f=0.\notag
\end{align}
By the flat condition, it follows that
\begin{align}\label{laplacian of f}
\Delta f=\frac{n}{2}.
\end{align}
Since $\Sigma$ is compact, $f$ attains its maximum at some point $x_0\in \Sigma$. Hence,
\begin{align}
\Delta f(x_0)\le0.\notag
\end{align}
It is impossible by (\ref{laplacian of f}). We complete the proof.
\end{proof}

\begin{proof}[Proof of Theorem \ref{theorem-uniform decay and noncollapsed case}]
We first show that $\Sigma$ in Lemma \ref{theo-convergence of genneral case} is diffeomorphic to a compact shrinking Ricci soliton with
nonnegative Ricci curvature when $r\ge r_0$. Since
 $(\Sigma,$ $g_{\Sigma}(t))$ is a $\kappa$-noncollapsed ancient Ricci flow with nonnegative Ricci curvature satisfying (\ref{decay of t}),
by   Theorem 3.1 in \cite{Na},  we see that for any fixed $x\in \Sigma$ and  any sequence $\{\tau_i\}\to\infty$,
 $(\Sigma,\tau_i^{-1} g_{\Sigma}(\tau_i t),x)$ subsequently converges to a shrinking Ricci soliton $(\Sigma',$ $g'(t),x')$ with nonnegative Ricci curvature.  On the other hand, by Lemma 0.3 in \cite{Ni},  there exists a constant $C_1$  such that
 \begin{align}
 {\rm diam}( \Sigma,g_{\Sigma}(t))\le C_1\sqrt{|t|+1}.\notag
 \end{align}
 In particular,
 \begin{align}\label{diam estimate}
 {\rm diam}(  \Sigma, \tau_i^{-1} g_{\Sigma}(-\tau_i))\le C_1.
 \end{align}
    Thus,  $\Sigma'$ is diffeomorphic to $\Sigma$ by Cheeger-Gromov compactness theorem.  Consequently, each  $\Sigma_{r}$  is diffeomorphic to $\Sigma'$ for any $ r\ge r_1$.

    Next, we prove (\ref{lowe estimate of R}). Suppose that (\ref{lowe estimate of R}) is not true. Then, there exists $p_i\to\infty$ such that
    \begin{align}\label{limit is zero}
    R(p_i)f(p_i)\to0,~as~i\to\infty.
    \end{align}
 By Lemma \ref{theo-convergence of genneral case}, for any $p_{i}\rightarrow \infty$,
 rescaled flows $(M,f^{-1}(p_i)g(f(p_i)t),p_{i})$ converges subsequently to
$(\mathbb{R}\times \Sigma,$ $ds^2+g_{\Sigma}(t),p_{\infty})$ ( $t\in (-\infty,0]$) in the Cheeger-Gromov topology. By (\ref{limit is zero}), $R_{\Sigma}(p_{\infty},0)=0$. Since $g_{\Sigma}(t)$ is ancient, $R_{\Sigma}(t)$ is nonnegative by \cite{Ch}. Combining with (\ref{monotonicity of R}), $R_{\Sigma}(p_{\infty},t)=0$, $\forall~t\le0$. By the maximum principle, we see that $R_{\Sigma}(x,t)=0$, $\forall~t\le0$, $x\in\Sigma$. Note that $(\Sigma,\tau_i^{-1} g_{\Sigma}(-\tau_i),p_{\infty})$ subsequently converges to a shrinking Ricci soliton $(\Sigma',$ $g'(-1),p')$ for some $\{\tau_i\}\to\infty$. Hence, we get a compact shrinking Ricci soliton $(\Sigma',$ $g'(-1))$ whose scalar curvature is zero. This is impossible by Lemma \ref{compact shrinking soliton not flat}. Hence, we complete the proof.
\end{proof}

\subsection{Proof of  Theorem \ref{cor-4d-Ricci positive}}
According to \cite{Br2},

\begin{defi}\label{brendle} An $n$-dimensional steady  Ricci soliton $(M,g,f)$  is called  asymptotically   cylindrical   if   the following
holds:

(i) Scalar curvature $R(x)$  of $g$ satisfies
$$\frac{C_1}{\rho(x)} \le  R(x) \le  \frac{C_2}{\rho(x)},~\forall~\rho(x)\ge r_0, $$
where $C_1, C_2$  are two positive constants.

(ii) Let $p_i$ be an arbitrary sequence of marked points going to infinity.
Consider  rescaled metrics
$ g_i(t) = r_i^{-1} \phi^*_{r_i t} g,$ where
$r_i R(p_i) = \frac{n-1}{2} + o(1)$ and $ \phi_{ t}$ is a one-parameter subgroup generated by $X=-\nabla f$.   As  $i \to\infty,$
 flows $(M,  g_i(t), p_i)$
converge in the Cheeger-Gromov sense to a family of shrinking cylinders
$(   \mathbb R \times \mathbb S^{n-1}(1), \widetilde g(t)), t \in  [0, 1).$  The metric $\widetilde g(t)$  is given by
\begin{align} \widetilde g(t) =   dr^2+ (n - 2)(2 -2t) g_{\mathbb S^{n-1}(1)},\notag
\end{align}
 where $\mathbb S^{n-1}(1)$ is the unit sphere in Euclidean space.
\end{defi}

 We need to describe some properties of  asymptotically cylindrical geometry on steady Ricci solitons.

\begin{lem}\label{lem-limit of sectional curvature}
Let $(M,g, f)$ be a noncompact steady Ricci soliton which is asymptotically cylindrical. Then, we have
\begin{align}
\frac{|{\rm Hess}R|+|\Delta{\rm Ric}|}{R^2}\to0,~as~\rho(x)\to\infty, \notag\\
\frac{{\rm Ric}(e_i,e_j)}{R}\to\frac{\delta_{ij}}{n-1},~as~\rho(x)\to\infty,\notag\\
\frac{{\rm Ric}(e_i,e_n)}{R}\to0,~as~\rho(x)\to\infty,\notag\\
\frac{{\rm Rm}(e_j,e_k,e_k,e_l)}{R}\to\frac{(1-\delta_{jk})\delta_{jl}}{(n-1)(n-2)},~as~\rho(x)\to\infty,\notag\\
\frac{{\rm Rm}(e_n,e_i,e_j,e_k)}{R}\to0,~as~\rho(x)\to\infty,\notag
\end{align}
where $\{e_1,\cdots,e_{n}\}$ are the orthonormal basis of $T_xM$ with respect to metric $g$ and $1\le i,j,k,l\le n-1$, $e_n=
\frac{\nabla f}{|\nabla f|}$.
\end{lem}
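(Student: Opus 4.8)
The plan is to read off all five limits from the asymptotically cylindrical property (Definition \ref{brendle}) by passing to the rescaled limit flow. Fix an arbitrary sequence $p_i\to\infty$ and set $r_i$ so that $r_iR(p_i)=\frac{n-1}{2}+o(1)$; by Definition \ref{brendle}(i), $r_i\sim\rho(p_i)/C$ and $r_i\to\infty$. The rescaled flows $(M,g_i(t),p_i)$ converge in the Cheeger--Gromov sense to the shrinking cylinder $(\mathbb R\times\mathbb S^{n-1}(1),\widetilde g(t))$ with $\widetilde g(t)=dr^2+(n-2)(2-2t)g_{\mathbb S^{n-1}(1)}$. On this cylinder one computes directly the curvature quantities at $t=0$: writing $e_n$ for the unit vector along the $\mathbb R$-factor and $e_1,\dots,e_{n-1}$ tangent to the sphere factor of radius $\sqrt{2(n-2)}$, one has $\overline{\rm Ric}(e_i,e_j)=\frac{1}{2}\delta_{ij}$, $\overline{\rm Ric}(e_i,e_n)=0$, $\overline{\rm Ric}(e_n,e_n)=0$, scalar curvature $\overline R=\frac{n-1}{2}$, sectional curvatures of the sphere factor equal to $\frac{1}{2(n-2)}$ (so $\overline{\rm Rm}(e_j,e_k,e_k,e_l)=\frac{(1-\delta_{jk})\delta_{jl}}{2(n-2)}$), and all curvature components involving $e_n$ vanish. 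Moreover $\widetilde g(t)$ is a \emph{Ricci soliton} in the limit, so $\overline{\rm Hess}\,\overline R$ and $\overline{\Delta}\,\overline{\rm Ric}$ can be computed on the cylinder as well.

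The key point is to convert these limit values into the stated pointwise limits as $\rho(x)\to\infty$ on $M$. Because the quantities in the lemma are all scale-invariant ratios — $\frac{{\rm Ric}}{R}$, $\frac{{\rm Rm}}{R}$, $\frac{|{\rm Hess}R|+|\Delta{\rm Ric}|}{R^2}$ are each unchanged under the parabolic rescaling $g\mapsto r^{-1}\phi^*_{rt}g$ — their values at $p_i$ with respect to $g$ equal their values at $p_i$ with respect to $g_i(0)$, which by $C^\infty_{loc}$ convergence tend to the corresponding values on the limit cylinder. For instance $\frac{{\rm Ric}(e_i,e_j)}{R}(p_i)\to\frac{\overline{\rm Ric}(e_i,e_j)}{\overline R}=\frac{\delta_{ij}/2}{(n-1)/2}=\frac{\delta_{ij}}{n-1}$, and similarly $\frac{\overline{\rm Rm}(e_j,e_k,e_k,e_l)}{\overline R}=\frac{(1-\delta_{jk})\delta_{jl}/(2(n-2))}{(n-1)/2}=\frac{(1-\delta_{jk})\delta_{jl}}{(n-1)(n-2)}$; the quotient $\frac{|{\rm Hess}R|+|\Delta{\rm Ric}|}{R^2}$ tends to $\frac{|\overline{\rm Hess}\,\overline R|+|\overline\Delta\,\overline{\rm Ric}|}{\overline R^2}=0$ since $\overline R$ is constant on the cylinder and $\overline{\rm Ric}$ is parallel. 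Since $p_i\to\infty$ was arbitrary, a standard subsequence argument upgrades the sequential statements to genuine limits as $\rho(x)\to\infty$: if some ratio failed to converge to its claimed value, one could extract a sequence $p_i\to\infty$ violating it, contradicting what was just shown.

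The main obstacle is purely bookkeeping: one must make sure that the orthonormal frame $\{e_1,\dots,e_n\}$ on $M$ with $e_n=\nabla f/|\nabla f|$ is the one that converges, under the Cheeger--Gromov identifications, to the product frame on $\mathbb R\times\mathbb S^{n-1}(1)$ that splits off the $\mathbb R$-direction. This is exactly where $e_n=\nabla f/|\nabla f|$ enters: under the rescaling the vector field $-\nabla f$ generates the flow $\phi_{r_it}$, and in the limit it becomes the unit translation field along the line factor (this is the content already used implicitly in Lemma \ref{theo-convergence of genneral case} and in \cite{Br2}). Granting that identification, every displayed limit is just the ratio of the explicit cylinder curvature to $\overline R=\frac{n-1}{2}$, and the proof is complete.
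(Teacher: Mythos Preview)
Your proposal is correct and follows essentially the same approach as the paper's proof: both pass to the rescaled Cheeger--Gromov limit cylinder, exploit the scale invariance of the displayed ratios, and read off the values from the explicit curvature of $\mathbb R\times\mathbb S^{n-1}$. The paper differs only in that it carries out in detail the frame identification you flag as the ``main obstacle'': it shows directly that $X_{(i)}=R(p_i)^{-1/2}\nabla f$ converges to a nontrivial parallel vector field on the limit (via $|\nabla_{g_{p_i}}X_{(i)}|_{g_{p_i}}=|{\rm Ric}|/\sqrt{R(p_i)}\to 0$), hence tangent to the $\mathbb R$-factor, so that the $e_j$ orthogonal to $\nabla f$ land in $T\mathbb S^{n-1}$---exactly the content you defer to Lemma~\ref{theo-convergence of genneral case} and \cite{Br2}.
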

\begin{proof}
We only need to show that for any $p_i$ tends to infinity and for any sequence of orthonormal basis $\{e_1^{(i)},\cdots,e_{n-1}^{(i)}\}\subseteq T_{p_i}\Sigma_{f(p_i)}$,  the following holds (perhaps after taking a subsequence),

\begin{align}
\frac{|{\rm Hess}R|(p_i)+|\Delta{\rm Ric}|(p_i)}{R^2(p_i)}\to0,~as~i\to\infty,\\
\frac{{\rm Ric}(e^{(i)}_j,e^{(i)}_k)}{R(p_i)}\to\frac{\delta_{jk}}{n-1},~as~i\to\infty,\label{ric-limit}\\
\frac{{\rm Ric}(e^{(i)}_j,e^{(i)}_n)}{R(p_i)}\to0,~as~i\to\infty,\label{ric-limit-n}\\
\frac{{\rm Rm}(e^{(i)}_j,e^{(i)}_k,e^{(i)}_k,e^{(i)}_l)}{R(p_i)}\to\frac{(1-\delta_{jk})\delta_{jl}}{(n-1)(n-2)},~as~i\to\infty, \label{Rm-limit}\\
\frac{{\rm Rm}(e^{(i)}_n,e^{(i)}_j,e^{(i)}_k,e^{(i)}_l)}{R(p_i)}\to0,~as~i\to\infty. \label{Rm-limit-n}
\end{align}

For any $p_i$ tends to infinity, we consider the sequence of pointed manifold $(M,g_{p_i},p_i)$, where $g_{p_i}=R(p_i)g$ (the scale $r_i^{-1}$ is replaced by $\frac{2}{n-1}r_i^{-1}$  in Definition \ref{brendle}). By the asymptotically cylindrical condition, by taking a subsequence, we have $(M,g_{p_i},p_i)$ converge to $(\mathbb{R}\times\mathbb{S}^{n-1},g_{\infty},p_{\infty})$, where $g_{\infty}=ds^2+(n-1)(n-2)g_{\mathbb{S}^{n-1}(1)}$. Then it is easy to see that
\begin{align}
&\lim_{i\to\infty}\frac{|{\rm Hess}R|(p_i)+|\Delta{\rm Ric}|(p_i)}{R^2(p_i)}\notag\\
=&\lim_{i\to\infty}\big(|{\rm Hess}R_{g_{p_i}}|_{g_{p_i}}(p_i)+|\Delta_{g_{p_i}}{\rm Ric}_{g_{p_i}}|(p_i)\big)\notag\\
=&|{\rm Hess}R_{g_{\infty}}|_{g_{\infty}}(p_{\infty})+|\Delta_{g_{\infty}}{\rm Ric}_{g_{\infty}}|(p_{\infty})\notag\\
=&0.\notag
\end{align}
Moreover, for any fixed $\overline{r}>0$, we have
\begin{align}\label{Ric-convergence}
\lim_{i\to\infty}\frac{|{\rm Ric}|(x)}{R(p_i)}=\lim_{i\to\infty}|{\rm Ric}_{g_{p_i}}|_{g_{p_i}}(x)=\frac{1}{\sqrt{n-1}},~\forall~x\in B(p_{i},\bar r;  {g_{p_i}}).
\end{align}

Let $X_{(i)}=R(p_{i})^{-\frac{1}{2}}\nabla f$. By (\ref{Ric-convergence}), we see that
\begin{align}
\sup_{ B(p_{i},\bar r ;  {g_{p_i}})}| \nabla_{(g_{p_i})}X_{(i)}|_{g_{p_i}}&= \sup_{ B(p_{i},\bar r ;  {g_{p_i}})}\frac{|{\rm Ric}|}{\sqrt{R(p_{i})}}\le 2\sqrt{\frac{R(p_{i})}{n-1}} \to 0.\notag
\end{align}
Note that
$$\sup_{ B(p_{i},\bar r;  {g_{p_i}})}| \nabla^{m}_{(g_{p_i})}X_{(i)}|_{g_{p_i}}\leq C(n)\sup_{ B(p_{i},\bar r;  {g_{p_i}})}| \nabla^{m-1}_{(g_{p_i})}{\rm Ric}({g_{p_i})}|_{g_{p_i}}\to 0.$$
Then $X_{(i)}$ converges to a parallel  vector field $X_{(\infty)}$ on $( M_{\infty},$ $  g_{\infty})$.
 Moreover,
 \begin{align}
 |X_{(i)}|_{g_{p_i}}( x)=|\nabla f|(p_{i})
=\sqrt{R_{\rm max}}+o(1)>0, ~\forall~ x\in B(p_{i},\bar r ;  {g_{p_i}}).\notag
 \end{align}
   This implies that $X_{(\infty)}$ is non-trivial. Hence, $X_{(\infty)}$ is tangent to $\mathbb{R}$ in $\mathbb{R}\times\mathbb{S}^{n-1}$.

   Suppose that
   $R(p_i)^{-\frac{1}{2}}e^{(i)}_j\to e^{(\infty)}_j$ for $1\le j\le n-1$. Then,
   \begin{align}
   e^{(\infty)}_n=\lim_{i\to\infty}\frac{X_{(i)}}{|X_{(i)}|_{g_{p_i}}}=\frac{X_{(\infty)}}{|X_{(\infty)}|_{g_{\infty}}}, \notag\\
   \langle e^{(\infty)}_j,X_{(\infty)}\rangle_{g_{\infty}}=\lim_{i\to\infty}\langle e^{(i)}_j, X_{(i)} \rangle_{g_{p_i}}=0,\notag\\
   |e^{(\infty)}_j|_{g_{\infty}}=\lim_{i\to\infty}R(p_i)^{-\frac{1}{2}}|e^{(i)}_j|_{g_{p_i}}=1.\notag
   \end{align}
  Thus,  all unit vectors $e^{(\infty)}_j\in T\mathbb{S}^{n-1}$ for $1\le j\le n-1$. It follows that
   \begin{align*}
   {\rm Ric}_{g_{\infty}}(e^{(\infty)}_j,e^{(\infty)}_k)=\frac{\delta_{jk}}{n-1},~\forall~1\le j,k\le n-1.\\
   {\rm Ric}_{g_{\infty}}(e^{(\infty)}_j,e^{(\infty)}_n)=0,~\forall~1\le j\le n-1.\\
   {\rm Rm}_{g_{\infty}}(e^{(\infty)}_j,e^{(\infty)}_k,e^{(\infty)}_k,e^{(\infty)}_l)=\frac{(1-\delta_{jk})\delta_{jl}}{(n-1)(n-2)},~\forall~1\le j,k,l\le n-1,\\
   {\rm Rm}_{g_{\infty}}(e^{(\infty)}_n,e^{(\infty)}_j,e^{(\infty)}_k,e^{(\infty)}_l)=0,~\forall~1\le j,k,l\le n-1.
   \end{align*}
Hence,  by the convergence of $(M,g_{p_i},p_i)$, it is easy to check that (\ref{ric-limit}), (\ref{ric-limit-n}),  (\ref{Rm-limit}) and (\ref{Rm-limit-n}) are all true.

\end{proof}

\begin{lem}\label{lem-positive sectional curvature}
Let $(M,g,f)$ be a noncompact steady Ricci soliton with positive Ricci curvature, which is asymptotically cylindrical. Then, there exists a compact set $K\subseteq M$ such that  $(M,g)$ has positive sectional curvature on $M\setminus K$.
\end{lem}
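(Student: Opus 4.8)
The plan is to use Lemma \ref{lem-limit of sectional curvature} to show that the normalized sectional curvatures of $(M,g)$ converge, as $\rho(x)\to\infty$, to those of the model cylinder $(\mathbb{R}\times\mathbb{S}^{n-1},g_\infty)$, and then exploit the positivity of the Ricci curvature on the remaining (degenerate) direction. Fix a point $x$ with $\rho(x)$ large, let $e_n=\nabla f/|\nabla f|$ and complete to an orthonormal basis $\{e_1,\dots,e_{n-1},e_n\}$ with $e_1,\dots,e_{n-1}$ tangent to the level set $\Sigma_{f(x)}$. By Lemma \ref{lem-limit of sectional curvature}, after dividing by $R(x)$, the curvature operator of $g$ at $x$ converges to that of $g_\infty$; in particular for the ``spherical'' two-planes, $\mathrm{Rm}(e_j,e_k,e_k,e_j)/R(x)\to \tfrac{1}{(n-1)(n-2)}>0$ for $j\ne k\le n-1$, while the mixed components $\mathrm{Rm}(e_n,e_i,e_j,e_k)/R(x)\to 0$, and the ``axial'' sectional curvatures $\mathrm{Rm}(e_n,e_j,e_j,e_n)/R(x)\to 0$. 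So the only two-planes whose curvature is not already bounded below by a positive multiple of $R(x)$ are those close to containing the axial direction $e_n$.

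The key step is to handle these near-axial planes using positivity of $\mathrm{Ric}$. Write a unit tangent vector as $v=a e_n + w$ with $w\perp e_n$, $a^2+|w|^2=1$; I want a lower bound for the sectional curvature $K(e_n,v)$ (equivalently, for $K$ of any plane, by reducing to an orthonormal pair one of which is close to $e_n$). Using the first Bianchi identity and the explicit limits, $\mathrm{Rm}(e_n,v,v,e_n)/R(x)$ can be expanded in $a$ and $w$: the $w$-$w$ part tends to $\tfrac{|w|^2}{(n-1)(n-2)}$ and the cross terms and pure $e_n$-$e_n$ term tend to $0$. Thus for planes with $|w|$ bounded away from $0$ we already get a positive lower bound $\ge c\,R(x)$ for $\rho(x)$ large. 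For planes with $|w|$ small — i.e. $v$ nearly parallel to $e_n$ — I instead note that the second Bianchi-type identity from Lemma \ref{lem-formula of sectional curvature}, together with the curvature decay $|\mathrm{Hess}\,R|+|\Delta\mathrm{Ric}|=o(R^2)$ and $|\mathrm{Ric}|^2=O(R^2)$ from Lemma \ref{lem-limit of sectional curvature}, forces $\mathrm{Rm}(e_n,e_j,e_j,e_n)=o(R)$ but the correction terms $2\mathrm{Rm}(e_i,e_j,e_j,e_k)\mathrm{Ric}(e_i,e_k)$ are comparable to $R^2$; combined with $\mathrm{Ric}(e_n,e_n)>0$ and $\mathrm{Ric}(e_n,e_n)/R\to 0$, one gets that the curvature in the $e_n$-direction is small but the \emph{sum} of all sectional curvatures through $e_n$ is exactly $\mathrm{Ric}(e_n,e_n)>0$. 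Since all the other terms $\mathrm{Rm}(e_n,e_j,e_j,e_n)$ are $o(R)$ and individually bounded below (up to $o(R)$) by their limits, and at least one is $\mathrm{Ric}(e_n,e_n)>0$ minus $o(R)$-controlled quantities, one concludes each is $\ge -o(R)$ and their structure forces nonnegativity, then the Bianchi expansion upgrades this to strict positivity once $|w|^2 R /$ (error) dominates.

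More cleanly: I would argue by contradiction. Suppose sectional curvature fails to be positive outside every compact set; then there is a sequence $p_i\to\infty$ and unit orthonormal pairs $u_i,v_i\in T_{p_i}M$ with $K(u_i,v_i)\le 0$. Rescale by $g_{p_i}=R(p_i)g$; by the asymptotically cylindrical hypothesis and the argument in Lemma \ref{lem-limit of sectional curvature}, $(M,g_{p_i},p_i)\to(\mathbb{R}\times\mathbb{S}^{n-1},g_\infty,p_\infty)$ in Cheeger--Gromov sense, and $(u_i,v_i)$ converges to an orthonormal pair $(u_\infty,v_\infty)$ at $p_\infty$ with $K_{g_\infty}(u_\infty,v_\infty)\le 0$. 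But the cylinder metric $g_\infty=ds^2+(n-1)(n-2)g_{\mathbb{S}^{n-1}(1)}$ has sectional curvature equal to $0$ only for two-planes containing the line factor, and $>0$ otherwise. Hence the limit plane contains $\partial_s=X_{(\infty)}$ (the limit of $R(p_i)^{-1/2}\nabla f$), i.e. $u_i$ is asymptotically parallel to $\nabla f/|\nabla f|$. For such near-axial planes in $M$ (not in the limit), I then use the exact identity of Lemma \ref{lem-formula of sectional curvature} for $\mathrm{Rm}(\tfrac{\nabla f}{|\nabla f|},e,e,\tfrac{\nabla f}{|\nabla f|})$: the left side equals $-\tfrac12(\mathrm{Hess}\,R)(e,e)-\mathrm{Ric}(e,\cdot)^2 + (\Delta\mathrm{Ric})(e,e)+2\mathrm{Rm}(\cdot,e,e,\cdot)\cdot\mathrm{Ric}$, whose dominant surviving term, after dividing by $R^2$ and using Lemma \ref{lem-limit of sectional curvature}, is $\tfrac{2}{(n-1)(n-2)}\cdot\tfrac{1}{n-1}\cdot(n-2) - \tfrac{1}{(n-1)^2} = \tfrac{1}{(n-1)^2}>0$; hence $\mathrm{Rm}(\tfrac{\nabla f}{|\nabla f|},e,e,\tfrac{\nabla f}{|\nabla f|})\ge \tfrac{c}{f^2}>0$ for $\rho$ large and $e\perp\nabla f$ a unit level-set direction, contradicting $K(u_i,v_i)\le 0$ for the limiting axial plane. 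Taking $K$ to be the complement of every point of such a sequence gives the desired compact set $K$; the main obstacle is the careful bookkeeping in this last identity to verify that the limiting constant is genuinely positive and that the lower-order terms are uniformly $o(R^2)$, which is exactly what Lemma \ref{lem-limit of sectional curvature} (via the decay of $\mathrm{Hess}\,R$ and $\Delta\mathrm{Ric}$) supplies.
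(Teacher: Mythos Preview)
Your ``more cleanly'' argument is essentially the paper's proof. The paper works directly rather than by contradiction: fixing $p$ with $\rho(p)$ large and the frame $\{e_1,\dots,e_{n-1},e_n=\nabla f/|\nabla f|\}$, it reads off $\mathrm{Rm}(e_j,e_k,e_k,e_j)\ge \tfrac{R(p)}{2(n-1)(n-2)}>0$ from Lemma~\ref{lem-limit of sectional curvature}, and then uses Lemma~\ref{lem-formula of sectional curvature} together with Lemma~\ref{lem-limit of sectional curvature} to obtain
\[
|\nabla f|^2(p)\,\mathrm{Rm}(e_n,e_j,e_j,e_n)\ \ge\ \Bigl(\tfrac{1}{(n-1)^2}-\epsilon\Bigr)R^2(p)\ >\ 0.
\]
Your computation of the limiting constant $\tfrac{2}{(n-1)^2}-\tfrac{1}{(n-1)^2}=\tfrac{1}{(n-1)^2}$ is exactly the paper's. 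Your earlier attempt to leverage $\mathrm{Ric}(e_n,e_n)>0$ directly is muddled and unnecessary; the positive Ricci hypothesis plays no role in the argument beyond guaranteeing $R>0$.

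One point both you and the paper leave implicit: checking the axial planes $\mathrm{span}(e_n,e_j)$ and the spherical planes $\mathrm{span}(e_j,e_k)$ does not automatically give positivity on \emph{all} $2$-planes, and in your contradiction argument the plane $P_i$ only \emph{limits} to one containing $e_n$. Writing an arbitrary plane as $\mathrm{span}(u,\alpha e_n+\beta w)$ with $u,w\perp e_n$, the sectional curvature is the quadratic form $\alpha^2 K(u,e_n)+2\alpha\beta\,\mathrm{Rm}(u,e_n,w,u)+\beta^2 K(u,w)$; positive definiteness needs $|\mathrm{Rm}(u,e_n,w,u)|^2<K(u,e_n)K(u,w)\sim cR^3$, i.e.\ the cross term must be $o(R^{3/2})$, which is sharper than the $o(R)$ given by Lemma~\ref{lem-limit of sectional curvature}. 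This follows from the identity (\ref{commutation}), $R_{ijkl}\nabla_l f=\nabla_jR_{ik}-\nabla_iR_{jk}$, together with $|\nabla\mathrm{Ric}|_g=o(R^{3/2})$ (since the cylinder has parallel Ricci tensor and the convergence is $C^\infty$).
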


\begin{proof}
For $p\in M$, let $\{e_1,\cdots,e_n\}$ be an  orthonormal basis of $T_{p}M$ with respect to metric $g$ and $e_n=\frac{\nabla f}{|\nabla f|}$. When $\rho(p)\ge r_0$,  we have
\begin{align}
{\rm Rm}(e_j,e_k,e_k,e_j)\ge \frac{R(p)}{2(n-1)(n-2)}>0,~\forall~1\le j<k\le n-1.\notag
\end{align}
By Lemma \ref{lem-formula of sectional curvature} and Lemma \ref{lem-limit of sectional curvature}, for any $\epsilon>0$, we have
\begin{align}
&|\nabla f|^2(p)\cdot{\rm Rm}(e_n,e_j,e_j,e_n)\notag\\
&=-\frac{1}{2}({\rm Hess}R)_{jj}-\sum_{l=1}^n R_{jl}R_{jl}+\Delta R_{jj}+2\sum_{i,l=1}^{n}R_{ijjl}R_{il}\notag\\
&\ge (\frac{1}{(n-1)^2}-\epsilon)R^2(p)>0.\notag
\end{align}
Hence, $(M,g)$ has positive sectional curvature on $M\setminus B(p_0,r_0;g)$ for some $p_0\in M$ and $r_0>0$.
\end{proof}

 \begin{proof}[Proof of  Theorem \ref{cor-4d-Ricci positive}]
 Since $R(x)$ decays uniformly, $R(x)$ attains its maximum at some point $o$. Thus
 \begin{align}
 {\rm Ric}( \nabla f,\nabla f)(o)=-\frac{1}{2}\langle\nabla R,\nabla f\rangle(o)=0.\notag
 \end{align}
 Note that ${\rm Ric}(\cdot)$ is positive, we get $\nabla f(o)=0$. By \cite{CaCh}, we know that $f$ satisfies (\ref{linear of f}).

 By  Lemma  \ref{theo-convergence of genneral case}, for any $p_{i}\rightarrow \infty$,
 rescaled flows $(M,f^{-1}(p_i)g(f(p_i)t),p_{i})$ converges subsequently to
$(\mathbb{R}\times \Sigma,$ $ds^2+g_{\Sigma}(t))$ ( $t\in (-\infty,0]$) in the Cheeger-Gromov topology. On the other hand,
    by  (\ref{lowe estimate of R}) in Theorem \ref{theorem-uniform decay and noncollapsed case}, we have
 \begin{align}\label{equivalence of f and R}
 \frac{C}{f(x)}\ge|{\rm Rm}|(x)\ge R(x)\ge \frac{c}{f(x)},~\forall~\rho(x)\ge r_0.
 \end{align}
  This means that  $f$ and
 $R^{-1}$ is  equivalent.  Thus, we may assume that $f(p_i)R(p_i)\to C_0$, as $i\to\infty$ by taking subsequence. As a consequence,  the rescaled flows $(M,R(p_i)g(R^{-1}(p_i)t),p_{i})$ converges subsequently to
$(\mathbb{R}\times \Sigma,$ $ds^2+g'_{\Sigma}(t))$ ( $t\in (-\infty,0]$) in the Cheeger-Gromov topology, where $g'_{\Sigma}(t)=C_0g_{\Sigma}(C_0^{-1}t)$. Note that $(\Sigma,g'_{\Sigma}(t))$ is a 3-dimensional $\kappa$-noncollpased Ricci flow which satisfies $(\ref{decay of t})$. By \cite{Ch}, $(\Sigma,g'_{\Sigma}(t))$ has nonnegative sectional curvature.  Since the Ricci curvature is positive, the level set of $f$ is diffeomorphic to $\mathbb{S}^3$ by \cite{DZ6}. Hence, $\Sigma$ is diffeomorphic to $\mathbb{S}^3$. By \cite{Ni}, $(\Sigma,g'_{\Sigma}(t))$ is a group of shrinking sphere. Therefore, we prove that $(M,g,f)$ is asymptotically clinderical. Moreover, $(M,g)$ has positive sectional curvature on $M\setminus K$ for some compact $K$  by Lemma \ref{lem-positive sectional curvature}.
 
  \end{proof}

\vskip6mm

\section*{References}

\small

\begin{enumerate}

\renewcommand{\labelenumi}{[\arabic{enumi}]}

\bibitem{Br1} Brendle, S., \textit{Rotational symmetry of self-similar solutions to the Ricci flow}, Invent. Math. , \textbf{194} No.3 (2013), 731-764.

\bibitem{Br2} Brendle, S., \textit{Rotational symmetry of Ricci solitons in higher dimensions}, J. Diff. Geom., \textbf{97} (2014), no. 2, 191-214.

\bibitem{Ca} Cao, H.D., \textit{Existence of gradient K\"{a}hler-Ricci solitons}, Elliptic and parabolic methods in geometry (Minneapolis, MN, 1994), 1-16, A K Peters, Wellesley, MA, 1996.

\bibitem{CaCh} Cao, H.D. and Chen, Q., \textit{On locally conformally flat gradient steady Ricci solitons},
Trans. Amer. Math. Soc., \textbf{364} (2012), 2377-2391 .

\bibitem{C-He} Cao, H.D. and He, C.X., \textit{Infinitesimal rigidity of collapsed gradient steady Ricci solitons in dimension three}, arXiv:math/1412.2714v1.

\bibitem{Ch} Chen, B.L., \textit{Strong uniqueness of the Ricci flow},  J. Diff.  Geom. \textbf{82} (2009),  363-382.

\bibitem{CLN} Chow, B., Lu, P. and Ni, L., \textit{Hamilton's Ricci flow} in: {\it Lectures in Contemporary Mathematics 3}
,Science Press, Beijing $\&$ American Mathematical Society, Providence, Rhode Island (2006).

%\bibitem{CG} Cheeger, J. and Gromoll, D., \textit{The splitting theorem for manifolds of nonnegative Ricci curvature}, J. Diff. Geom., \textbf{6} (1972), 119-128.

%\bibitem{CZ} Chen, B.L. and Zhu X.P., \textit{Volume growth and curvature decay of positively curved K\"{a}hler manifolds},
%Q. J. Pure Appl. Math. , \textbf{1} (2005), no. 1, 68-108.

\bibitem{DW}  Dancer, A. and Wang, M., \textit{Some new examples of non-Ka¡§hler Ricci solitons}, Math. Res. Lett. \textbf{16}, (2009) 349-363.

\bibitem{DZ1}Deng, Y.X. and Zhu, X.H., \textit{Complete non-compact gradient Ricci solitons with nonnegative Ricci curvature},
Math. Z., \textbf{279} (2015), no. 1-2, 211-226.

\bibitem{DZ2} Deng, Y.X. and Zhu, X.H., \textit{Asymptotic behavior of positively curved steady Ricci solitons}, Trans. Amer. Math. Soc. \textbf{370} (2018), no.4, 2855-2877.

\bibitem{DZ5} Deng, Y.X.; Zhu, X.H., 3D steady gradient Ricci solitons with linear curvature decay, arXiv:math/1612.05713, to appear in IMRN.

\bibitem{DZ6} Deng, Y.X. and Zhu, X.H., \textit{Higher dimensional steady Ricci solitons with linear curvature decay}, arXiv:math/1710.07815.

\bibitem{G} Gromov, M., \textit{Almost flat manifolds}, J. Diff. Geom., \textbf{13} (1978), 231-241.

\bibitem{GLX} Guan, P.F., Lu, P. and Xu, Y.Y., \textit{A regidity theorem for codimension one shrinking gradient Ricci solitons in $\mathbb{R}^{n+1}$},
 Calc. Var. Partial Differential Equations \textbf{54} (2015), no. 4, 4019-4036.

\bibitem{H2} Hamilton, R.S., \textit{Three manifolds with positive Ricci curvature}, J. Diff. Geom., \textbf{17} (1982), 255-306.

%\bibitem{H} Hamilton, R.S., \textit{The Harnack estimate for the Ricci flow}, J. Diff. Geom., \textbf{37} (1993), 225-243.

\bibitem{H4} Hamilton, R.S., \textit{A compactness property for solution of the Ricci flow}, Amer. J. Math., \textbf{117} (1995), 545-572.

\bibitem{H1} Hamilton, R.S., \textit{Formation of singularities in the Ricci flow}, Surveys in Diff. Geom., \textbf{2} (1995),
7-136.

%\bibitem{MT} Morgan, J. and Tian, G., \textit{ Ricci flow and the Poincar\'{e} conjecture}, Clay Math. Mono., 3. Amer. Math. Soc., Providence, RI; Clay Mathematics Institute, Cambridge, MA, 2007, xlii+521 pp. ISBN: 978-0-8218-4328-4.

\bibitem{MSW} Munteanu, O., Sung, C.-J. A. and  Wang, J., \textit{Poisson Equation on complete manifolds}, arXiv:math/1701.02865v1.

\bibitem{Ni} Ni, L., \textit{Closed type-I Ancient solutions to Ricci flow}, Recent Advances in Geometric Analysis, ALM, vol. 11 (2009), 147-150.

\bibitem{Na} Naber, A., \textit{Noncompact shrinking four solitons with nonnegative curvature}, J. Reine Angew Math., \textbf{645} (2010), 125-153.

\bibitem{Pe1} Perelman, G., \textit{The entropy formula for the Ricci flow and its geometric applications}, arXiv:math/0211159.

%\bibitem{Pe2} Perelman, G., \textit{Ricci
%flow with surgery on three-manifolds},\\ arXiv:math/0303109v1.

\bibitem{S1} Shi, W.X., \textit{Ricci deformation of the metric on complete noncompact Riemannian
manifolds}, J. Diff. Geom., \textbf{30} (1989), 223-301.

\end{enumerate}

\end{document}